\numberwithin{equation}{section}
\newcommand{\beq}{\begin{equation}}
\newcommand{\eeq}{\end{equation}}
\newcommand{\beqs}{\begin{eqnarray*}}
\newcommand{\eeqs}{\end{eqnarray*}}
\newcommand{\beqn}{\begin{eqnarray}}
\newcommand{\eeqn}{\end{eqnarray}}
\newcommand{\beqa}{\begin{array}}
\newcommand{\eeqa}{\end{array}}
\def\lra{\longrightarrow}
\def\bc{\begin{center}}
\def\ec{\end{center}}
\def\begeq{\begin{equation}}
\def\endeq{\end{equation}}
\def\and{\quad{\rm and}\quad}
\let\lra=\longrightarrow
\def\mapright\#1{\,\smash{\mathop{\lra}\limits^{\#1}}\,}
\newtheorem{prop}{Proposition}[section]
\newtheorem{theo}[prop]{Theorem}
\newtheorem{lem}[prop]{Lemma}
\newtheorem{cor}[prop]{Corollary}
\newtheorem{rem}[prop]{Remark}
\newtheorem{defi}[prop]{Definition}
\newtheorem{conj}[prop]{Conjecture}
\begin{document}
\bibliographystyle{plain}

\title  {Higher dimensional steady Ricci  solitons with linear curvature decay}

\date{}
\author {Yuxing $\text{Deng}^*$ }
\author { Xiaohua $\text{Zhu}^{**}$}

\thanks {*Partially supported by the NSFC Grants 11701030,
** by   the NSFC Grants 11331001 and 11771019}
\subjclass[2000]{Primary: 53C25; Secondary: 53C55,
58J05}
\keywords { Ricci flow, Ricci soliton, $\kappa$-solution, Perelman's conjecture}

\address{ Yuxing Deng\\School of Mathematical Sciences, Beijing Normal University,
Beijing, 100875, China\\
dengyuxing@mail.bnu.edu.cn}

\address{ Xiaohua Zhu\\School of Mathematical Sciences and BICMR, Peking University,
Beijing, 100871, China\\
xhzhu@math.pku.edu.cn}

\begin{abstract} We prove that  any   noncompact $\kappa$-noncollapsed steady (gradient) Ricci soliton with nonnegative curvature operator  must be rotationally symmetric if it  has a  linear curvature decay.
\end{abstract}

\maketitle


\section{Introduction}
As one of singular model solutions of  Ricci flow,  it is important to   classify   steady (gradient) Ricci solitons under a suitable curvature condition   \cite{H1, Pe, MT}, etc..   In his celebrated paper \cite{ Pe},   Perelman conjectured that \textit{any 3-dimensional $\kappa$-noncollapsed steady (gradient) Ricci soliton  must be rotationally symmetric}.
The conjecture has been  solved by Brendle in 2012 \cite{Br1}. It is known by a result of Chen that    that any  3-dimensional ancient solution has nonnegative sectional curvature \cite{Ch}.  As a natural generalization of Perelman's conjecture  in higher dimensions,  we have

  \begin{conj}\label{higher-dim-conj}   Any $n$-dimensional  ($n\ge 4$) $\kappa$-noncollapsed steady (gradient)  Ricci soliton with positive curvature operator must be rotationally symmetric.
  \end{conj}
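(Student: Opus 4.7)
The approach I would take is to reduce the conjecture to Brendle's higher-dimensional rigidity theorem, which asserts that an $n$-dimensional ($n\ge 4$) steady gradient Ricci soliton with positive curvature operator that is asymptotic to the round shrinking cylinder $S^{n-1}\times \mathbb{R}$ is rotationally symmetric (the Bryant soliton). Granting that result, the bulk of the proof consists in establishing that under the hypotheses of Conjecture~\ref{higher-dim-conj} the soliton is asymptotically cylindrical.

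First I would exploit the standard soliton identities. From $\mathrm{Ric}+\nabla^2 f=0$ one derives $R+|\nabla f|^2=C$ and $R+\Delta f=0$; after normalization one has $R+|\nabla f|^2=1$, so $|\nabla f|\le 1$ and $R\le 1$. Positive curvature operator implies that $f$ is strictly concave with a unique critical point (by Hamilton), and for large $c$ the level sets $\Sigma_c=f^{-1}(-c)$ are smooth compact hypersurfaces foliating the soliton outside a compact set. The gradient flow of $f/|\nabla f|^2$ provides a canonical diffeomorphism between different level sets, which is the basic device for doing asymptotic geometry.

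Next I would perform a pointed blow-down analysis at infinity. Pick a sequence $x_i\to\infty$ and rescale $g_i:=R(x_i)g$ based at $x_i$. By $\kappa$-noncollapsing, nonnegative curvature operator, and Hamilton's compactness theorem, a subsequence converges in the pointed Cheeger-Gromov sense to an ancient $\kappa$-solution with nonnegative curvature operator. Using that $\nabla f$ generates the self-similar flow on the original soliton, I would argue that the blow-down limit contains a line in the $\nabla f$ direction and therefore splits isometrically as $N^{n-1}\times\mathbb{R}$. The compact cross-section $N^{n-1}$ is itself a $\kappa$-solution with nonnegative curvature operator, which by the classification work of Perelman, Brendle, Cao--Chen--Zhu, and Bamler must be a round sphere. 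Hence the blow-down is a round shrinking cylinder, and Brendle's rigidity theorem finishes the argument.

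The main obstacle is the line-splitting step. One needs to show that along any sequence $x_i\to\infty$ the curvature scale $R(x_i)^{-1/2}$ is much smaller than the distance scale along which the integral curve of $\nabla f/|\nabla f|^2$ through $x_i$ travels; equivalently, one needs enough curvature decay that after rescaling the $\nabla f$-direction becomes long and thin. A \emph{linear} curvature decay $R(x)\le C\, d(x,x_0)^{-1}$ makes this work cleanly: $|\nabla f|\to 1$ at infinity and the soliton flow yields an honest $\mathbb{R}$-factor in the limit. This is why the paper adopts linear curvature decay as an extra hypothesis; deriving even a weak curvature decay from only $\kappa$-noncollapsing and positive curvature operator appears to require substantially new ideas and is the essential remaining difficulty of the full conjecture.
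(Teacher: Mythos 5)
The statement you are asked about is a conjecture, and the paper does not prove it either: it proves the rotational symmetry only under the additional hypothesis of linear curvature decay $R(x)\le C/\rho(x)$, exactly the reduction you describe. You have correctly identified both the overall architecture (blow-down at infinity, splitting off a line in the $\nabla f$ direction, identifying the cross-section, then invoking Brendle's rigidity theorem for asymptotically cylindrical solitons) and the essential open difficulty, namely that no curvature decay is currently derivable from $\kappa$-noncollapsing and positive curvature operator alone. On that level your proposal matches the paper.

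However, even within the conditional argument (granting linear decay) there are two concrete gaps where the paper has to do real work that your sketch elides. First, you assert that the cross-section $N^{n-1}$ of the blow-down is compact, but this is not automatic: one must show that the level sets $\Sigma_r=\{f=r\}$ have diameter $O(\sqrt{r})$, i.e.\ bounded diameter after rescaling by $R(p)\sim 1/r$. The paper devotes Section 3 to this, treating the family of induced metrics $h_r$ on the level sets as a Ricci-flow-like evolution ($\partial_r h_r = \tfrac{2}{|\nabla f|^2}\overline{\rm Ric}+\cdots$) and running Perelman's second-variation argument for distance functions to get $\tfrac{d}{dr}d_r \le C(\tau_0/r + 1/\tau_0 + d_r/r^{3/2})$, whence ${\rm diam}(\Sigma_r)\le C\sqrt{r}$. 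Without this, the splitting could a priori produce a noncompact cross-section. Second, your identification of the cross-section as a round sphere appeals to ``the classification work of Perelman, Brendle, Cao--Chen--Zhu, and Bamler'' of compact $\kappa$-solutions with nonnegative curvature operator; no such classification exists in the generality you need for $n\ge 5$ (Brendle's classification is three-dimensional, and the higher-dimensional results require extra pinching hypotheses). The paper instead proves that the limiting ancient flow on the cross-section satisfies the Type-I bound $R_{\mathbb S^{n-1}}(\cdot,t)\le C/|t|$ (Lemma 4.8, propagated from the linear decay of $R$ along the soliton flow) and then applies Ni's theorem on closed Type-I ancient solutions with nonnegative curvature operator; since the cross-section is diffeomorphic to $\mathbb S^{n-1}$, the resulting spherical space form is the round sphere. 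This Type-I estimate is not optional decoration: it is the mechanism by which the cross-section is pinned down, and your proposal has no substitute for it.
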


   An essential progress to  Conjecture \ref{higher-dim-conj} has been made  by  Brendle  in \cite{Br2}. In  fact, he proved   that   any  steady   (gradient)   Ricci soliton  with positive sectional curvature must be rotationally symmetric if  it  is  asymptotically   cylindrical.   For  $\kappa$-noncollapsed steady  K\"{a}hler-Ricci  solitons with nonnegative bisectional curvature,  the authors  recently proved  that they must be flat \cite{DZ2}, \cite{DZ4}.

\begin{defi}\label{brendle} An $n$-dimensional steady  gradient  Ricci soliton $(M,g,f)$  is called  asymptotically   cylindrical   if   the following
holds:

(i) Scalar curvature $R(x)$  of $g$ satisfies
$$\frac{C_1}{\rho(x)} \le  R(x) \le  \frac{C_2}{\rho(x)},~\forall~\rho(x)\ge r_0, $$
where $C_1, C_2$  are two positive constants and $\rho(x)$ denotes the distance from a fixed  point $x_0$.

(ii) Let $p_m$ be an arbitrary sequence of marked points going to infinity.
Consider  rescaled metrics
$ g_m(t) = r_m^{-1} \phi^*_{r_m t} g,$ where
$r_m R(p_m) = \frac{n-1}{2} + o(1)$ and $ \phi_{ t}$ is a one-parameter subgroup generated by $X=-\nabla f$.   As  $m \to\infty,$
 flows $(M,  g_m(t), p_m)$
converge in the Cheeger-Gromov sense to a family of shrinking cylinders
$(   \mathbb R \times \mathbb S^{n-1}(1), \widetilde g(t)), t \in  (0, 1).$  The metric $\widetilde g(t)$  is given by
\begin{align} \widetilde g(t) =   dr^2+ (n - 2)(2 -2t) g_{\mathbb S^{n-1}(1)},\notag
\end{align}
 where $\mathbb S^{n-1}(1)$ is the unit sphere in Euclidean space.
\end{defi}

The property (ii) above means that  for any $p_{i}\rightarrow+\infty$,
 rescaled flows $(M,$ $R(p_i)g(R^{-1}(p_i)t),p_{i})$ converge subsequently to
$(\mathbb{R}\times \mathbb{S}^{n-1},ds^2+g_{\mathbb{S}^{n-1}}(t))$ ( $t\in (-\infty,0]$),  where $g_{\mathbb{S}^{n-1}}(t))$ is a family of shrinking round shperes.     In this paper,   we  verify  the properties (i) and (ii) in Definition \ref{brendle} to show that Conjecture \ref{higher-dim-conj} is true in addition that  the scalar curvature of steady Ricci soliton has a linear curvature decay. More precisely, we have

\begin{theo}\label{main theorem}
Let $(M,g)$ be a noncompact $\kappa$-noncollapsed steady (gradient) Ricci soliton with nonnegative curvature operator. Then, it is rotationally symmetric if its scalar curvature $R(x)$ satisfies
\begin{align}\label{upper-linear-bound}
R(x)\le \frac{C}{\rho(x)}.
\end{align}

\end{theo}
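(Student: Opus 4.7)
The strategy is to verify both conditions (i) and (ii) of Definition~\ref{brendle} for our soliton and then invoke Brendle's theorem \cite{Br2} to conclude the rotational symmetry. The upper half of (i) is exactly the hypothesis \eqref{upper-linear-bound}, so what remains to establish is (a) the matching lower bound $R(x)\ge C_1/\rho(x)$, and (b) the cylindrical blow-down described in (ii).

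For (a), normalize the soliton equation $\text{Ric}+\nabla^2 f = 0$ so that $R+|\nabla f|^2\equiv 1$; then $|\nabla f|\to 1$ at infinity by the upper bound on $R$. Since $\text{Ric}\ge 0$, the identity $\nabla f\cdot\nabla R = 2\,\text{Ric}(\nabla f,\nabla f)\ge 0$ shows that $R$ decays monotonically along the flow of $-\nabla f$, so it suffices to rule out $R(x_i)\rho(x_i)\to 0$ along any sequence $x_i\to\infty$. I would argue by contradiction: along such a sequence, the rescaled ancient flows $(M,R(x_i)\,g(R(x_i)^{-1}t),x_i)$ have base-point curvature tending to zero while remaining $\kappa$-noncollapsed. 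Since our soliton is already a $\kappa$-solution with nonnegative curvature operator, a Perelman-type asymptotic-cone analysis (Bishop--Gromov together with Hamilton's trace Harnack) forces a Euclidean factor in the blow-down limit of dimension too large to be consistent with the linear upper bound on $R$ and the normalization $R+|\nabla f|^2\equiv 1$. The resulting contradiction yields the desired lower bound $R(x)\ge C_1/\rho(x)$.

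Given the two-sided linear estimate, for $p_m\to\infty$ pick $r_m$ with $r_mR(p_m) = \tfrac{n-1}{2}+o(1)$, so that $r_m\sim\rho(p_m)\to\infty$. The rescaled flows $(M,g_m(t),p_m)$ with $g_m(t) = r_m^{-1}\phi_{r_mt}^*g$ are ancient and $\kappa$-noncollapsed with curvature uniformly bounded on time intervals $[-T,1)$ by standard $\kappa$-solution arguments, so Hamilton's compactness theorem provides a smooth limit Ricci flow $(M_\infty,g_\infty(t),p_\infty)$ with nonnegative curvature operator. To produce the splitting, introduce the rescaled potentials $\widetilde f_m = r_m^{-1/2}(f-f(p_m))$; in $g_m(0)$ one computes
\[
|\nabla^{g_m}\widetilde f_m|_{g_m}^2 = 1 - R \longrightarrow 1,\qquad |\nabla^{g_m,2}\widetilde f_m|_{g_m} = O(r_m^{-1/2})\longrightarrow 0,
\]
so the limit $\widetilde f_\infty$ satisfies $|\nabla\widetilde f_\infty|\equiv 1$ and $\nabla^2\widetilde f_\infty\equiv 0$, giving a unit parallel vector field on $M_\infty$. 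Hamilton's strong maximum principle then splits $M_\infty$ isometrically as $\mathbb R\times N^{n-1}$, with $N$ a compact ancient $\kappa$-solution of scalar curvature $(n-1)/2$ at $t=0$ and positive curvature operator, and classification results (B\"ohm--Wilking together with the rigidity of positively curved compact ancient solutions, via their asymptotic shrinking soliton) identify $N$ as the round shrinking $\mathbb S^{n-1}$ of the prescribed radius. This verifies Definition~\ref{brendle}(ii), and Brendle's theorem finishes the proof.

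The main obstacle is step (a): in higher dimensions we have no a priori cylindrical structure to exploit, so the contradiction against faster-than-linear decay must be extracted purely from $\kappa$-noncollapsing, the nonnegativity of the curvature operator, and the soliton identities, rather than from any cross-sectional geometry. A secondary difficulty is the identification of $N$ as the round sphere, since a complete classification of compact $\kappa$-solutions with positive curvature operator is not available outside dimension three; here one must exploit the specific scalar curvature value, the noncollapsing inherited from the soliton, and the rigidity of shrinking round spheres among compact ancient solutions with positive curvature operator.
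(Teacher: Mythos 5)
Your overall strategy (verify (i) and (ii) of Definition~\ref{brendle}, then quote Brendle) is the same as the paper's, but two of the three load-bearing steps are missing, and you have flagged only one of them. For the lower bound (a), your ``Perelman-type asymptotic-cone analysis \dots forces a Euclidean factor of dimension too large'' is not an argument: nothing in $\kappa$-noncollapsing plus nonnegative curvature operator directly forbids $R(x_i)\rho(x_i)\to 0$ along a sequence, and the trace Harnack by itself only gives monotonicity of $R$ along the flow lines of $-\nabla f$, which you already used. The paper obtains the lower bound from Theorem~\ref{lower-bound-scalar-curvature} (a Harnack-inequality argument from \cite{DZ2} that essentially uses the existence of an equilibrium point), combined with Corollary~\ref{cor-lower decay nonnegative case}, which handles the case where ${\rm Ric}$ is not strictly positive by passing to the universal cover and invoking the splitting $\widetilde M = N\times\mathbb{R}^k$ of \cite{GLX}. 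You never address the non-strictly-positive Ricci case at all, yet your later splitting and potential-function estimates implicitly assume ${\rm Ric}>0$ and an equilibrium point.

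The second, unacknowledged gap is the compactness of the cross-section $N$ in the blow-down limit. You assert ``$N$ a compact ancient $\kappa$-solution,'' but Hamilton compactness plus the splitting along $\nabla f$ only produces $\mathbb{R}\times N$ with $N$ complete; a priori $N$ could be noncompact, in which case condition (ii) of Definition~\ref{brendle} fails. The paper's ``main step'' is precisely the diameter estimate ${\rm diam}(\Sigma_r,g)\le C\sqrt r$ (Proposition~\ref{theorem-diameter estimate}), proved by running a Perelman \cite[Lemma 8.3(b)]{Pe}-style second-variation argument on the level-set flow $\frac{\partial h_r}{\partial r}=\frac{2}{|\nabla f|^2}{\rm Ric}$ together with the Gauss equation to control $\overline{\rm Ric}$; combined with the volume lower bound from $\kappa$-noncollapsing this yields compactness of the limit cross-section and its diffeomorphism type $\mathbb{S}^{n-1}$. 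Finally, your identification of $N$ as the round sphere is also incomplete: B\"ohm--Wilking classifies Ricci flows converging to round metrics, not ancient solutions, and your appeal to ``the asymptotic shrinking soliton'' requires first establishing the type-I bound $R_N(\cdot,t)\le C/|t|$ (the paper's Lemma~\ref{lem-decay of t}), after which Ni's classification of closed type-I ancient solutions with nonnegative curvature operator \cite{Ni} gives the shrinking round sphere directly. You also assert positive curvature operator on $N$ without justification; the limit only inherits nonnegativity.
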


In Theorem \ref{main theorem},  we do not assume that  $(M,g)$ has positive Ricci curvature.  In fact,  we can prove that
 $(M,g)$ is an Euclidean  space  if   the Ricci curvature is not strictly  positive (cf. Section 5).   Since the Euclidean  space is  rotationally symmetric,  we may assume that $(M,g)$  is not a flat space.   Then we show that  the condition  (\ref{upper-linear-bound}) in Theorem \ref{main theorem} also implies
 \begin{align}\label{lower-bound-r}R(x)\ge \frac{c_0}{\rho(x)},~\forall ~\rho (x)\ge r_0>0,
\end{align}
where $c_0>0$ is a constant (cf. Corollary \ref{cor-lower decay nonnegative case}).
  (\ref{lower-bound-r})  has been proved  for the  steady Ricci soliton  with nonnegative curvature operator and  positive Ricci curvature \cite{DZ2} (also see Theorem \ref{lower-bound-scalar-curvature}).

    Theorem \ref{main theorem} is reduced to prove

\begin{theo}\label{main theorem-2}
Let $(M,g)$ be a  $\kappa$-noncollapsed steady  (gradient)   Ricci soliton with nonnegative sectional curvature.
Suppose that  $(M,g)$
has an exactly linear curvature decay, i.e.
\begin{align}\label{linear-decay-curvature}
\frac{C_0^{-1}}{\rho(x)}\le R(x)\le \frac{C_0}{\rho(x)}
\end{align}
 for some constnat $C_0>0$. Let $g(t)=\phi_t^{\ast}g$\footnote{$\phi_t$ is defined  as in (ii) of Definition \ref{brendle}.}.   Then for any $p_{i}\rightarrow+\infty$,
 rescaled flows $(M,$ $R(p_i)g(R^{-1}(p_i)t),p_{i})$ converge subsequently to
$(\mathbb{R}\times \mathbb{S}^{n-1},ds^2+g_{\mathbb{S}^{n-1}}(t))$ ( $t\in (-\infty,0]$) in the Cheeger-Gromov topology, where
 $(\mathbb{S}^{n-1},$ $g_{\mathbb{S}^{n-1}}(t))$ is a $\kappa$-noncollapsed ancient Ricci flow with nonnegative sectional curvature. Moreover,  scalar curvature $R_{\mathbb{S}^{n-1}}(x,t)$ of $(\mathbb{S}^{n-1},g_{\mathbb{S}^{n-1}}(t))$ satisfies
 \begin{align}\label{curvature-decay-sn}
 R_{\mathbb{S}^{n-1}}(x,t)\le\frac{C}{|t|}, ~\forall~x\in \mathbb{S}^{n-1},
 \end{align}
 where $C$ is a uniform constant.
\end{theo}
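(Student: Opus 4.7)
The plan is to perform a blow-down analysis at the marked points $p_i \to \infty$, using the two-sided linear decay (\ref{linear-decay-curvature}) to produce a non-degenerate rescaled limit and then exploiting the steady soliton identities to split off an $\mathbb{R}$-factor in that limit.

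\emph{Step 1 (Compactness of the rescaled flows.)} For $\tilde g_i(t) = R(p_i)\, g(R(p_i)^{-1} t)$, the upper bound $R \le C_0/\rho$ combined with $R(p_i) \asymp \rho(p_i)^{-1}$ shows that a $\tilde g_i$-ball of radius $A$ around $p_i$ sits in a $g$-ball of radius $A R(p_i)^{-1/2} = O(\sqrt{\rho(p_i)}) = o(\rho(p_i))$, where $R \le 2 C_0 R(p_i)$. Together with nonnegativity of the curvature operator, this gives a uniform bound on $|\mathrm{Rm}|_{\tilde g_i}$ on compact sets. Since $g(t)$ is the isometric pullback $\phi_t^* g$, the bound persists for $t \in (-\infty, 0]$. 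The $\kappa$-noncollapsedness is scale-invariant, so Hamilton's Cheeger--Gromov compactness theorem, together with Shi's derivative estimates, yields a subsequential smooth limit $(M_\infty, \tilde g_\infty(t), p_\infty)$, a $\kappa$-noncollapsed ancient Ricci flow with nonnegative curvature operator.

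\emph{Step 2 (Splitting off a line.)} The plan is to produce a parallel unit vector field on $(M_\infty, \tilde g_\infty(0))$ out of $\nabla f$. The steady soliton identities give
\begin{equation*}
R + |\nabla f|^2_g = a_0, \qquad \nabla^2 f = -\mathrm{Ric}, \qquad \nabla R = 2\,\mathrm{Ric}(\nabla f, \cdot),
\end{equation*}
so $|\nabla f|^2_g \to a_0$ (we may assume $a_0 > 0$, otherwise $g$ is Ricci-flat). Set $\tilde \nu_i = R(p_i)^{1/2}\, \nabla f / |\nabla f|_g$, a unit vector field in $\tilde g_i$. Because the Levi-Civita connection is unchanged by constant rescaling, a direct computation with the soliton equation gives
\begin{equation*}
|\widetilde\nabla \tilde\nu_i|_{\tilde g_i} \le C\, R(p_i)^{1/2}\,\bigl(|\mathrm{Ric}|_g + |\nabla R|_g\bigr)/|\nabla f|_g \to 0,
\end{equation*}
and iterating differentiation of the soliton equation (with Shi-type bounds along the $\phi_t$-orbits) extends this to all $t \in (-\infty, 0]$ and to higher derivatives. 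Passing to the limit yields a smooth unit parallel vector field on $M_\infty$, and Hamilton's version of the splitting theorem for Ricci flows with nonnegative curvature operator gives $(M_\infty, \tilde g_\infty(t)) = (\mathbb R \times N^{n-1},\, ds^2 + h(t))$ for an ancient flow $h(t)$ with nonnegative curvature operator.

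\emph{Step 3 (Topology of $N$ and the curvature bound.)} To show $N$ is compact, diffeomorphic to $\mathbb S^{n-1}$, I would use the lower bound $R \ge C_0^{-1}/\rho$: after rescaling, this bounds $R_{\tilde g_\infty}(\cdot, 0)$ away from zero on the cross-section, so the strong maximum principle applied to the ancient flow $h(t)$ with nonnegative curvature operator promotes $h(0)$ to strictly positive Ricci on $N$. Bonnet--Myers then forces $\mathrm{diam}(N, h(0)) < \infty$ and $\pi_1(N)$ finite; the $\kappa$-noncollapsedness of the limit rules out nontrivial quotients, so $N$ is simply connected and hence topologically $\mathbb S^{n-1}$. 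The scalar curvature estimate (\ref{curvature-decay-sn}) is then a consequence of the boundedness of $R_{\mathbb S^{n-1}}(\cdot, 0)$ (inherited from the rescaled version of (\ref{linear-decay-curvature})) combined with Hamilton's Harnack inequality for ancient $\kappa$-solutions with nonnegative curvature operator, which forces $R_{\mathbb S^{n-1}}(\cdot, t) \le C/|t|$ as $t \to -\infty$.

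The main obstacle is the splitting in Step 2: although the pointwise estimate on $\widetilde\nabla \tilde\nu_i$ decays like $R(p_i)^{1/2}$, propagating this into $C^\infty$-convergence of $\tilde\nu_i$ on compact parabolic regions requires iterated differentiation of the soliton equation combined with Shi-type estimates along the one-parameter family of diffeomorphisms $\phi_t$. A secondary difficulty is ensuring that $N$ is genuinely compact rather than merely having bounded cross-sections in the rescaled picture; this relies crucially on the \emph{lower} bound in (\ref{linear-decay-curvature}), which is the content of Corollary \ref{cor-lower decay nonnegative case} and is what distinguishes the cylindrical regime from a flat blow-down.
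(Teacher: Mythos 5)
Your Steps 1 and 2 follow the paper's Lemma \ref{lem-convergence and splitting} in outline (rescaled compactness, then a parallel vector field built from $R(p_i)^{-1/2}\nabla f$), but the proposal omits the ingredient the paper identifies as the main step: the diameter estimate ${\rm diam}(\Sigma_r,g)\le C\sqrt r$ for the level sets (Section 3, Proposition \ref{theorem-diameter estimate} and Corollary \ref{set-mr-contain-2}), proved by a Perelman-type second variation argument applied to the flow (\ref{diff-flow}). This estimate is what turns the \emph{local} splitting coming from the parallel field into a \emph{global} one: the paper shows the integral curve of $X_{(\infty)}$ through $p_\infty$ is a genuine line precisely because $M_{p_i,k}\subset B(p_i,C_0+2k/\sqrt{R_{\max}};R(p_i)g)$, and it is also what makes the cross-section compact and identifies it with the Cheeger--Gromov limit of the level sets $\Sigma_{f(p_i)}\cong\mathbb S^{n-1}$ (Proposition \ref{cor-limit of level set} and Lemma \ref{sphere-diff}). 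Your substitutes for these steps fail: a positive lower bound on the scalar curvature of the limit together with nonnegative curvature operator does \emph{not} ``promote'' to strictly positive Ricci on $N$ (the strong maximum principle only says the null space of the curvature operator is parallel, i.e.\ $N$ may split further, e.g.\ $\mathbb S^2\times\mathbb R^{n-3}$ has $R\ge c>0$), so Bonnet--Myers is unavailable; and even granting compactness and simple connectedness, positive Ricci does not imply $N$ is topologically $\mathbb S^{n-1}$ when $n-1\ge 4$. The paper gets the topology from Morse theory on $f$ (Lemma \ref{sphere-diffeo}), not from curvature.

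Two further gaps. First, the bound (\ref{curvature-decay-sn}) does not follow from Hamilton's Harnack inequality: the trace Harnack for an ancient solution gives only $\partial_t R\ge 0$, hence $R(x,t)\le R(x,0)\le C$, a constant bound rather than $C/|t|$; moreover the Harnack requires nonnegative curvature operator, whereas Theorem \ref{main theorem-2} assumes only nonnegative sectional curvature. The paper instead derives (\ref{curvature-decay-sn}) from the soliton structure: $f(\phi_t(p))\ge f(p)+|t||\nabla f|^2(p)$ combined with $R\le C/f$ gives $R(p,t)\le C/(1+c|t|)$ (Lemma \ref{lem-decay of t}). Second, the theorem does not assume ${\rm Ric}>0$, and without it the equilibrium point need not be unique and Lemma \ref{sphere-diffeo} is unavailable; the paper devotes Section 5 to this case, splitting the universal cover via \cite{GLX}, transporting a Ricci-null parallel direction into the blow-down of the cross-section flow (using Naber's theorem and Ni's diameter bound), and reaching a contradiction with the finiteness of $\pi_1$ of compact shrinking solitons. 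Your proposal does not address this case at all, so as written it proves at best a weaker statement under the additional hypotheses of positive Ricci curvature and nonnegative curvature operator, and even then the compactness of $N$, its identification with $\mathbb S^{n-1}$, and the decay (\ref{curvature-decay-sn}) remain unjustified.
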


 Theorem \ref{main theorem-2} is  about   steady Ricci solitons  with  nonnegative sectional curvature, which is a weaker condition  than nonnegative  curvature operator. In fact,  applying  Theorem \ref{main theorem-2} to  4-dimensional   steady Ricci solitons,  we further prove

\begin{theo}\label{cor-rotational symmetry of 4d}
 Any 4-dimensional noncompact $\kappa$-noncollapsed steady  (gradient)  Ricci soliton with nonnegative sectional curvature  must be rotationally symmetric if  it has a linear curvature decay (\ref{linear-decay-curvature}).
\end{theo}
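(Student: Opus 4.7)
The plan is to reduce Theorem \ref{cor-rotational symmetry of 4d} to Brendle's rotational symmetry theorem from \cite{Br2}, by verifying that $(M,g)$ is asymptotically cylindrical in the sense of Definition \ref{brendle}. Condition (i) of Definition \ref{brendle} is the hypothesis \eqref{linear-decay-curvature} itself. For condition (ii), Theorem \ref{main theorem-2} specialized to $n=4$ produces, at every sequence $p_{i}\to\infty$, a subsequential Cheeger--Gromov limit of rescaled flows of the form $(\mathbb{R}\times\mathbb{S}^{3},\,ds^{2}+g_{\mathbb{S}^{3}}(t))$, where $(\mathbb{S}^{3},g_{\mathbb{S}^{3}}(t))$ is a $\kappa$-noncollapsed ancient Ricci flow of nonnegative sectional curvature satisfying the Type~I bound $R_{\mathbb{S}^{3}}(\cdot,t)\le C/|t|$. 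The principal remaining task is therefore to force $g_{\mathbb{S}^{3}}(t)$ to be the round shrinking cylinder metric specified in (ii).

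To identify $g_{\mathbb{S}^{3}}(t)$ as round, I would first apply Hamilton's strong maximum principle to the limit ancient flow: since the lower bound on the rescaled scalar curvature passes to the limit, $R_{\mathbb{S}^{3}}$ is strictly positive, so the flow is not flat and hence $g_{\mathbb{S}^{3}}(t)$ has strictly positive sectional curvature for all $t<0$. Next I would invoke the classification of compact $3$-dimensional $\kappa$-noncollapsed ancient Ricci flows with positive sectional curvature on $\mathbb{S}^{3}$ (Perelman, completed by Brendle's recent resolution of Hamilton's conjecture in dimension three): the only such flows are the family of round shrinking spheres and the rotationally symmetric Perelman oval. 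On the oval, the two cap regions model on a steady Bryant soliton at a fixed geometric scale, so $\sup_{\mathbb{S}^{3}}R_{\mathbb{S}^{3}}(\cdot,t)$ stays bounded away from zero as $t\to-\infty$. This is incompatible with $R_{\mathbb{S}^{3}}\le C/|t|\to 0$, which rules out the oval and leaves only the round shrinking sphere.

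With $g_{\mathbb{S}^{3}}(t)$ now identified, $(M,g)$ is asymptotically cylindrical, and to apply \cite{Br2} it suffices to upgrade the hypothesized nonnegative sectional curvature of $(M,g)$ to strict positivity. Since $(M,g)$ is not flat---flatness is incompatible with the round $\mathbb{R}\times\mathbb{S}^{3}$ ends just constructed---Hamilton's strong maximum principle for the Ricci flow $g(t)=\phi_{t}^{\ast}g$ combined with the steady soliton equation $R_{ij}=\nabla_{i}\nabla_{j}f$ forces either strictly positive sectional curvature of $(M,g)$ or an isometric de~Rham splitting with a Euclidean factor. A short check using the soliton equation rules out a splitting: for instance, $(M,g)\simeq N^{3}\times\mathbb{R}$ would require the asymptotic $S^{3}$-fiber to live inside $N^{3}$, and then the Ricci tensor $R_{ij}=2g_{S^{3}}$ on that fiber cannot be written as $\nabla_{i}\nabla_{j}f$ for any $f$. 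Hence $(M,g)$ has strictly positive sectional curvature, and \cite{Br2} yields rotational symmetry.

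The principal obstacle is the second step: classifying the $3$-dimensional cross-section limit as the round shrinking $\mathbb{S}^{3}$. Two technical points require care: (a) verifying that the subsequential Cheeger--Gromov limit produced by Theorem \ref{main theorem-2} is a bona fide $\kappa$-solution to which the $3$-dimensional classification machinery applies, and (b) quantifying the asymptotic $t\to-\infty$ behavior of the Perelman oval with enough precision to contradict the Type~I bound $R\le C/|t|$. The remaining work---the strong-maximum-principle upgrade to strict positivity of the sectional curvature of $(M,g)$ and the final appeal to \cite{Br2}---is comparatively routine.
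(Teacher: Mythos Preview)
Your overall strategy matches the paper's: apply Theorem \ref{main theorem-2}, identify the cross-section $(\mathbb{S}^{3},g_{\mathbb{S}^{3}}(t))$ as the round shrinking sphere, then invoke Brendle \cite{Br2}. The genuine difference lies in how the cross-section is pinned down. The paper does not appeal to the full classification of compact $3$-dimensional $\kappa$-solutions (round sphere versus Perelman oval). Instead it combines the Type~I bound \eqref{curvature-decay-sn} with Naber's asymptotic-shrinker theorem \cite{Na}: a blow-down sequence $(\mathbb{S}^{3},\tau_i^{-1}g_{\mathbb{S}^{3}}(\tau_i t))$ subconverges to a compact shrinking soliton on $\mathbb{S}^{3}$ with nonnegative sectional curvature, which by Hamilton \cite{H2} (or B\"ohm--Wilking \cite{BW}) must be the round sphere. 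Hence $g_{\mathbb{S}^{3}}(\tau_i t_0)$ has strictly positive curvature operator for large $\tau_i$; since this positivity propagates forward under Ricci flow, $g_{\mathbb{S}^{3}}(t)$ has positive curvature operator for all $t<0$. At this point Ni's result \cite{Ni}---closed Type~I ancient solutions with positive curvature operator are round---finishes the identification directly. This route sidesteps both the Perelman-oval asymptotics you flagged as delicate and the reliance on the much deeper (and chronologically later) full $3$-dimensional ancient-solution classification; it stays entirely within the references the paper already cites. Your route would also succeed, but the paper's is considerably lighter. One further remark: the paper does not spell out the upgrade from nonnegative to strictly positive sectional curvature on $(M,g)$ before citing \cite{Br2}, whereas you explicitly raise it; your de~Rham splitting sketch is in the right direction but rather loose as written, and a cleaner justification (or a check that Brendle's argument in \cite{Br2} tolerates nonnegative sectional curvature once the asymptotic cylinder is round) would strengthen the write-up.
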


We conjecture that  Theorem \ref{cor-rotational symmetry of 4d} holds for all dimensions. It is closely related to the classification of shrinking solitons on $\mathbb{S}^{n-1}$  with positive sectional curvature (see Section 6.1 for details). This will improve Brendle's result without the condition (ii) in Definition  \ref{brendle} \cite{Br2}.

The main step in the proof of Theorem \ref{main theorem-2} is to estimate the  diameter  of level sets of steady Ricci soliton (cf. Section 3). We study a metric flow of level sets.  This flow is very similar to the  Ricci flow. Then we can use Perelman's argument to estimate the distance functions in level sets \cite{Pe}.   In Section 4,  we  prove Theorem \ref{main theorem-2}  for steady Ricci solitons with positive Ricci curvature  by constructing a parallel vector field in  a  limit space  as in \cite{DZ2, DZ5, DZ3}.
The  proof of Theorem \ref{main theorem-2} will be completed in Section 5.
The proofs of  Theorem \ref{main theorem} and \ref{cor-rotational symmetry of 4d} are given in Section 6.

\section{Previous  results }

In this section, we recall some results for  steady Ricci solitons  in \cite{DZ2,DZ5}.  $(M,g,f)$ is called a  steady  gradient  Ricci soliton if  Ricci curvature  ${\rm Ric}(g)$  of $g$ on $M$  satisfies
\begin{align}
{\rm Ric}(g)={\rm Hess} f,
\end{align}
where $f$ is a smooth function on $M$.
We  always  assume that ${\rm Ric}(g)>0$ and there is  an equilibrium point $o$ in $M$ from this section to section 4.   The latter  means that  $\nabla f(o)=0$.    By the identity
$$|\nabla f|^2+R\equiv const.,$$
we see that $R(o)=R_{\max}$ and
\begin{align}\label{identity}
|\nabla f|^2+R=R_{\max}.
\end{align}

Under  (\ref{upper-linear-bound}), the equilibrium point always exists (cf. \cite[Corrollary 2.2]{DZ5}). Moreover,   it is unique since  ${\rm Ric}(g)>0$.  By Morse lemma, we have (cf. \cite{DZ5}).

\begin{lem}\label{sphere-diffeo}
The level set  $\Sigma_r=\{f(x)=r\}$ is  a closed  manifold  for any $r>f(o)$, which is diffeomorphic to $\mathbb{S}^{n-1}$.
\end{lem}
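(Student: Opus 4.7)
The plan is to apply Morse theory to the potential $f$: the soliton equation $\mathrm{Hess}\, f=\mathrm{Ric}(g)$ together with $\mathrm{Ric}>0$ makes $f$ strictly convex along every geodesic, while the identity (\ref{identity}) combined with the decay (\ref{upper-linear-bound}) makes $f$ proper, so that each sublevel set is diffeomorphic to a closed ball and its boundary $\Sigma_r$ to $\mathbb S^{n-1}$.

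First I would verify that $o$ is the \emph{unique} critical point of $f$. If $\nabla f(p)=0$ at some $p\ne o$, take a minimizing geodesic $\gamma:[0,L]\to M$ from $o$ to $p$; then $h(s):=f(\gamma(s))$ satisfies $h''(s)=\mathrm{Ric}(\dot\gamma,\dot\gamma)>0$, while $h'(0)=\langle\nabla f(o),\dot\gamma(0)\rangle=0$ and $h'(L)=\langle\nabla f(p),\dot\gamma(L)\rangle=0$, contradicting the strict convexity of $h$. Since $\mathrm{Hess}\, f(o)=\mathrm{Ric}(o)$ is positive definite, $o$ is a nondegenerate local minimum, hence the global minimum, and $f(o)=\min_M f$.

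Next I would show $f$ is proper. Since $\mathrm{Ric}>0$ we have $R(o)>0$, so $R_{\max}>0$; (\ref{upper-linear-bound}) gives $R(x)\to 0$ as $\rho(x)\to\infty$, and hence by (\ref{identity})
\[
|\nabla f(x)|^2 \;=\; R_{\max}-R(x)\;\longrightarrow\;R_{\max}>0.
\]
In particular $|\nabla f|\ge\tfrac12 \sqrt{R_{\max}}$ outside a compact set $K$ containing $o$. Combining this lower bound with the fact that, along any unit-speed geodesic ray $\gamma$ from $o$, the convex function $s\mapsto f(\gamma(s))$ has nondecreasing derivative $\int_0^s\mathrm{Ric}(\dot\gamma,\dot\gamma)\,du$, I would conclude that $f(\gamma(s))\to\infty$ as $s\to\infty$ (this is essentially the properness argument used in \cite{DZ5}). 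Consequently $f$ is proper, and each level set $\Sigma_r=f^{-1}(r)$ with $r>f(o)$ is compact; being a regular level set ($\nabla f\ne 0$ on it), it is a smooth closed hypersurface.

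Finally, to identify the diffeomorphism type, the flow generated by $X:=\nabla f/|\nabla f|^2$ on $M\setminus\{o\}$ satisfies $f(\phi_t(x))=f(x)+t$, so it supplies a diffeomorphism $\Sigma_{r_1}\xrightarrow{\sim}\Sigma_{r_2}$ whenever $r_1,r_2>f(o)$. Letting $r\downarrow f(o)$ and invoking the Morse lemma at the nondegenerate minimum $o$ identifies $\Sigma_r$, for $r$ slightly above $f(o)$, with a small geodesic sphere about $o$, hence with $\mathbb S^{n-1}$; therefore $\Sigma_r\cong\mathbb S^{n-1}$ for every $r>f(o)$. The main technical obstacle is the properness step, where one must rule out rays along which $f$ could stay bounded despite $|\nabla f|$ being large at infinity; once that is settled, the remainder is standard Morse theory.
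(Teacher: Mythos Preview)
Your proposal is correct and follows exactly the route the paper takes: the paper's entire proof is the line ``By Morse lemma, we have (cf.\ \cite{DZ5})'' after observing that the equilibrium point $o$ is unique since ${\rm Ric}(g)>0$, so your argument is precisely the content being deferred to \cite{DZ5}. Your caveat about the properness step is well-placed; in the paper's framework this is supplied by the Cao--Chen linear growth estimate $c_1\rho(x)\le f(x)$ (relation (\ref{inequality-cao chen})), which holds once ${\rm Ric}>0$ and an equilibrium point exists, and immediately gives compactness of the sublevel sets.
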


The following lemma is due to \cite[Lemma 3.1]{DZ5}

\begin{lem}\label{geodesic ball in level set}
Let $o\in M$ be an equilibrium point  of   steady  Ricci soliton $(M,g,f)$  with positive Ricci curvature. Then  for   any $p\in M$ and number $k>0$ with $f(p)-\frac{k}{\sqrt{R(p)}}>f(o)$,   it holds
\begin{align}\label{set-mr-contain-1}
B(p,\frac{k}{\sqrt{R_{max}}};  R(p)g)\subset M_{p,k},
\end{align}
where the set $M_{p,k}$ is    defined  by
$$M_{p,k}=\{x\in M| ~ f(p)-\frac{k}{\sqrt{R(p)}}\le f(x)\le f(p)+\frac{k}{\sqrt{R(p)}}\}.$$

\end{lem}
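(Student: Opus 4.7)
The plan is to combine the universal gradient bound for $f$ coming from the soliton identity with the trivial rescaling of distances, and then apply the mean value inequality along a minimizing geodesic.

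First, I would observe that (\ref{identity}) yields the pointwise estimate
$$|\nabla f|_g^2 = R_{\max} - R \le R_{\max},$$
so that $|\nabla f|_g \le \sqrt{R_{\max}}$ everywhere on $M$ (the equilibrium assumption is used only to ensure that the constant in the soliton identity is indeed $R_{\max}$, via $\nabla f(o)=0$).

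Next, since multiplying the metric by $R(p)$ rescales distances by $\sqrt{R(p)}$, I would rewrite the ball in the rescaled metric $R(p)g$ as
$$B\left(p,\frac{k}{\sqrt{R_{\max}}};\,R(p)g\right) = B\left(p,\frac{k}{\sqrt{R(p)\,R_{\max}}};\,g\right).$$
Then, for any $x$ in this ball, picking a minimizing $g$-geodesic $\gamma$ from $p$ to $x$ and integrating $|\nabla f|_g$ along it gives
$$|f(x)-f(p)| \le \int_\gamma |\nabla f|_g\,ds \le \sqrt{R_{\max}}\;d_g(p,x) \le \sqrt{R_{\max}}\cdot \frac{k}{\sqrt{R(p)\,R_{\max}}} = \frac{k}{\sqrt{R(p)}},$$
which is precisely the condition defining $M_{p,k}$. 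This yields the inclusion (\ref{set-mr-contain-1}).

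The hypothesis $f(p)-k/\sqrt{R(p)} > f(o)$ is not actually needed for the above estimate; it is imposed only so that the slab $M_{p,k}$ lies strictly above the equilibrium level $\{f=f(o)\}$, which is what matters for the later applications of this lemma (where one needs $\nabla f$ to be nonvanishing on $M_{p,k}$, so that the soliton flow defines smooth diffeomorphisms between the two boundary level sets). In particular, no appeal to positive Ricci curvature or to the sphere structure of level sets is needed to establish the containment: the entire claim reduces to the sharp gradient bound from (\ref{identity}) together with the elementary scaling of distances, so there is no genuine obstacle.
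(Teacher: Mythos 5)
Your argument is correct and is essentially the standard proof: the paper itself only cites \cite[Lemma 3.1]{DZ5} for this statement, and the argument there is exactly the gradient bound $|\nabla f|\le\sqrt{R_{\max}}$ from (\ref{identity}) combined with the rescaling of distances and integration along a minimizing geodesic. Your remark that the hypothesis $f(p)-k/\sqrt{R(p)}>f(o)$ is not needed for the containment itself, but only for the later use of $M_{p,k}$ as a regular slab, is also accurate.
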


By Lemma \ref{geodesic ball in level set}, we prove

\begin{lem}\label{lem-pointwise curvature estimate}Let $(M,g)$  be  a  steady (gradient) Ricci soliton with nonnegative sectional curvature and positive Ricci curvature. Suppose that (\ref{linear-decay-curvature}) holds.
Then  there exists a constant $C$ such that
\begin{align}\label{pointwise curvature estimate}
\frac{|\nabla R|(p)}{R^{3/2}(p)}\le C,~\forall~p\in ~M.
\end{align}
\end{lem}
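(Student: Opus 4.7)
The plan is to deduce (\ref{pointwise curvature estimate}) from Shi's local gradient estimate applied to the eternal Ricci flow $g(t)=\phi_t^{*}g$, after parabolically rescaling by the scalar curvature at the base point $p$. For fixed $p$ far from the equilibrium $o$, set $\tilde g(t)=R(p)\,g(R(p)^{-1}t)$, which is again a Ricci flow with $\tilde R(\tilde g(0))(p)=1$. Shi's local estimate says that if $|{\rm Rm}|_{\tilde g(t)}$ is bounded by a universal constant on some backward parabolic cylinder $B_{\tilde g(0)}(p,r_0)\times[-\tau_0,0]$, then $|\nabla_{\tilde g(0)}\tilde R|(p,0)\le C$; upon unscaling this reads $|\nabla R|(p)\le C\,R(p)^{3/2}$, which is exactly (\ref{pointwise curvature estimate}).

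It therefore suffices to construct such a parabolic curvature bound with constants independent of $p$. A convenient pointwise input is that, under nonnegative sectional curvature, tracing in an orthonormal frame containing a basis of an arbitrary $2$-plane $\sigma$ gives $K(\sigma)\le R/2$, so that $|{\rm Rm}|\le C(n)R$ on all of $M$, and it is enough to bound $R$ on the parabolic cylinder. At time $0$, Lemma \ref{geodesic ball in level set} provides $B_{\tilde g(0)}(p,r_0)\subset M_{p,\,r_0\sqrt{R_{max}}}$; combining $|\nabla f|\le\sqrt{R_{max}}$, the strict convexity of $f$ (from ${\rm Ric}>0$), and (\ref{linear-decay-curvature}) (which forces $f$ to be comparable to $\rho$ at infinity), choosing $r_0$ small keeps $\rho(x)$ within a fixed fraction of $\rho(p)$, hence $R(x)\le C\,R(p)$ on the ball. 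To extend this bound to the whole parabolic cylinder, we use the soliton identity $|{\rm Rm}|_{g(t)}(x)=|{\rm Rm}|_g(\phi_t(x))$ together with the speed bound $|\nabla f|\le\sqrt{R_{max}}$: choosing $\tau_0$ small (of order $1/\sqrt{R_{max}}$ in rescaled time), the trajectory $\phi_{t/R(p)}(x)$, for $t\in[-\tau_0,0]$ and $x\in B_{\tilde g(0)}(p,r_0)$, remains in the linear-decay regime, so $|{\rm Rm}|_{\tilde g(t)}\le C$ throughout.

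The main technical obstacle is this last step, namely verifying that the $\phi_t$-trajectories do not leave the controlled region during the rescaled time interval $[-\tau_0,0]$. In the metric $\tilde g$ the soliton vector field has norm of order $\sqrt{R_{max}/R(p)}$, which blows up as $\rho(p)\to\infty$; however, the actual $g$-displacement over $[-\tau_0,0]$ is only $\sqrt{R_{max}}\,\tau_0/R(p)$, and the linear decay $R\sim 1/\rho$ converts this into a fixed fraction of $\rho(p)$ once $\tau_0\sqrt{R_{max}}$ is chosen small. That is precisely what makes the parabolic curvature bound uniform in $p$, and Shi's gradient estimate then yields (\ref{pointwise curvature estimate}).
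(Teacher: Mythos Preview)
Your proposal is correct and shares the paper's overall architecture: rescale by $R(p)$, use Lemma~\ref{geodesic ball in level set} together with (\ref{linear-decay-curvature}) and (\ref{inequality-cao chen}) to bound $R$ on a $\tilde g(0)$-ball of fixed radius, extend this bound backward in time, and apply Shi's local estimate. The genuine difference is in the backward-in-time step. The paper exploits positive Ricci curvature directly via two monotonicity facts: the soliton identity $\partial_t R=2\,{\rm Ric}(\nabla f,\nabla f)\ge 0$ gives $R_{g_p(t)}(x)\le R_{g_p(0)}(x)$ for $t\le 0$, and the distance expansion (for $t\le 0$) gives $B(p,r;g_p(t))\subseteq B(p,r;g_p(0))$; together these yield the curvature bound on the full unit parabolic cylinder with no further work. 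You instead track the $\phi_{t/R(p)}$-trajectories using $|\nabla f|\le\sqrt{R_{\max}}$ and the linear decay to keep $\rho(\phi_{t/R(p)}(x))$ comparable to $\rho(p)$; this is valid, but forces you to take $\tau_0$ small (of order $(C_0\sqrt{R_{\max}})^{-1}$) and is more laborious. The paper's route is cleaner and more directly uses the hypothesis ${\rm Ric}>0$; your route has the mild advantage that it would survive with only the two-sided decay (\ref{linear-decay-curvature}) and $|\nabla f|$ bounded, without invoking the sign of ${\rm Ric}$ at this particular step. One small remark: you do not actually need to choose $r_0$ small to keep $\rho(x)$ comparable to $\rho(p)$ on $M_{p,r_0\sqrt{R_{\max}}}$, since the $f$-width of that set is $O(\sqrt{\rho(p)})=o(\rho(p))$; any fixed $r_0$ works once $\rho(p)$ is large.
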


\begin{proof}
Fix any $p\in M$ with $f(p)\ge r_0>>1$. Then
\begin{align}
|f(x)-f(p)|\le \frac{1}{\sqrt{R(p)}},~ \forall ~x\in M_{p,1}.\notag
\end{align}
It is known by  \cite{CaCh},
\begin{align}\label{inequality-cao chen}
c_1\rho(x)\le f(x)\le c_2 \rho(x), ~\forall~\rho(x)\ge r_0.
\end{align}
Thus by the curvature decay, we get
\begin{align}
c_2\rho(x)\ge f(p)-\frac{1}{\sqrt{R(p)}}\ge c_1\rho(p)-\sqrt{C_0\rho(p)}.\notag
\end{align}
It follows that
\begin{align}
\frac{R(x)}{R(p)}\le  C_0^2\frac{\rho(p)}{\rho(x)}\le \frac{2c_2C_0^2}{c_1},~\forall x\in M_{p,1}.\notag
\end{align}
On the other hand, by (\ref{set-mr-contain-1}), we have
\begin{align}
B(p,\frac{1}{\sqrt{R_{\max}}};g_p)\subseteq M_{p,1}.\notag
\end{align}
Hence
\begin{align}\label{curvature bound-1}
R(x)\le C^{\prime}R(p),~\forall ~x\in B(p,\frac{1}{\sqrt{R_{\max}}};g_p).
\end{align}

Let $\phi_t$ be generated by $-\nabla f$. Then $g(t)=\phi_t^{\ast}g$ satisfies the Ricci flow,
\begin{align}\label{Ricci flow equation}
         \frac{\partial g(t)}{\partial t} &= -2{\rm Ric}(g(t)).
                  \end{align}
Also rescaled flow  $g_p(t)=R(p) g(R^{-1}(p) t)$  satisfies (\ref{Ricci flow equation}). Since the Ricci curvature is positive,
\begin{align}
B(p,\frac{1}{\sqrt{R_{\max}}};g_p(t))\subseteq B(p,\frac{1}{\sqrt{R_{\max}}};g_p(0)),~t\in~[-1,0].\notag
\end{align}
and
\begin{align}\label{increasing of R}
\frac{\partial}{\partial t}R=2{\rm Ric}(\nabla f,\nabla f)\ge0.
\end{align}
Combining  the above two relations  with  (\ref{curvature bound-1}), we get
\begin{align}
R_{g_p(t)}(x)\le C^{\prime},~\forall~x\in B(p,\frac{1}{\sqrt{R_{\max}}};g_p(0)),~t\in [-1,0].\notag
\end{align}
Thus,  by Shi's higher order estimates \cite{S1}, we obtain
\begin{align}
|\nabla_{(g_p(t))} R_{g_p(t)}|(x)\le C_{1}^{\prime},~\forall~x\in B(p,\frac{1}{ 2\sqrt{R_{\max}}};g_p(-1)),~t\in  [-\frac{1}{2},0].\notag
\end{align}
It follows that
\begin{align}
|\nabla R|(x)\le C_{1}^{\prime}R^{3/2}(p),~\forall~x\in B(p,\frac{1}{2\sqrt{R_{\max}}};g_p(-1)).\notag
\end{align}
In particular, we have
\begin{align}
|\nabla R|(p)\le C_{1}^{\prime}R^{3/2}(p),~{\rm as}~\rho(p)\ge r_0.\notag
\end{align}
The lemma is  proved.
\end{proof}

\begin{rem}\label{rem-pointwise curvature estimate}
From the  argument  in the proof of  Lemma \ref{lem-pointwise curvature estimate},  we can further prove that  there exists a constant $C(k)$  for each $k\in\mathbb{N}$ such that
\begin{align}
\frac{|\nabla^k Rm|(p)}{R^{\frac{k+2}{2}}(p)}\le C(k),~\forall~p\in ~M.\notag
\end{align}
\end{rem}

In Proposition 4.3 of \cite{DZ2},
the authors have obtained a lower decay estimate for steady K\"{a}hler-Ricci solitons with nonnegative bisectional curvature and positive Ricci curvature.   Our proof essentially depends on the Harnack inequality and the existence of equilibrium point\footnote{The existence of equilibrium points  is proved for  steady K\"{a}hler-Ricci solitons  in \cite{DZ1}.}.   Thus  the  argument there still works  for steady Ricci solitons with nonnegative curvature operator and positive Ricci curvature if the soliton admits an equilibrium point. Namely, we have

\begin{theo}\label{lower-bound-scalar-curvature}
Let $(M,g)$ be a $\kappa$-noncollapsed steady Ricci soliton  with nonnegative curvature operator and positive Ricci curvature.  Suppose that $(M,g)$ has an  equilibrium point.  Then   scalar curvature  of $g$ satisfies
(\ref{lower-bound-r}).
\end{theo}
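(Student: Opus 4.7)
My plan is to follow the proof of Proposition 4.3 in [DZ2], which established the analogous lower bound for steady K\"ahler-Ricci solitons with nonnegative bisectional curvature and positive Ricci curvature admitting an equilibrium point. As remarked by the authors immediately before the statement of Theorem \ref{lower-bound-scalar-curvature}, only two ingredients of that proof are essential: Hamilton's differential Harnack inequality and the existence of the equilibrium point. Both remain valid in the present Riemannian setting, so the argument should go through with only notational changes.

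The setup is as follows. Let $o$ be the equilibrium point. From $|\nabla f|^2 + R = R_{\max}$ and $\nabla f(o) = 0$ we read off $R(o) = R_{\max}$, and since $\phi_t$ fixes $o$, the scalar curvature of the eternal Ricci flow $g(t) = \phi_t^\ast g$ at $o$ equals $R_{\max}$ for every $t$. The Cao--Chen inequality $c_1 \rho(x) \le f(x) \le c_2 \rho(x)$, the gradient estimate $|\nabla R| \le C R^{3/2}$ from Lemma \ref{lem-pointwise curvature estimate}, and the hypothesised upper decay (which forces $|\nabla f|^2 = R_{\max} - R$ to be close to $R_{\max}$ outside a compact set) together provide the geometric control needed to estimate path lengths and curvatures along the flow.

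The core step is Hamilton's Harnack inequality for ancient Ricci flows with nonnegative curvature operator; integrated along a spacetime curve $\sigma : [t_1, t_2] \to M$ from $o$ to $x$ it reads
\[ R(x, t_2) \;\ge\; R_{\max}\, \exp\Bigl(-\tfrac{1}{2} \int_{t_1}^{t_2} |\sigma'(t)|^2_{g(t)}\, dt \Bigr). \]
Choosing $\sigma$ to combine a minimising geodesic segment from $o$ to $x$ with the soliton flow generated by $\nabla f$, and estimating $|\sigma'|^2_{g(t)}$ via the upper bound on $R$ together with Cao--Chen, one should obtain $R(x) \ge c_0/\rho(x)$. The main obstacle is the precise choice of path and the book-keeping of the Harnack integral, which is the technical heart of the argument in [DZ2]; however, since its K\"ahler-specific features (Hamilton's Harnack for K\"ahler-Ricci flow and nonnegative bisectional curvature) enter only through statements that have direct Riemannian analogues (Hamilton's Harnack for Ricci flow and nonnegative curvature operator), the proof should carry over essentially verbatim, as the authors assert.
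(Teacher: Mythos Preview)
Your overall plan---transplant the proof of Proposition 4.3 in \cite{DZ2} using Hamilton's Harnack inequality for ancient Ricci flows with nonnegative curvature operator in place of the K\"ahler version---is exactly what the paper does; the paper gives no independent argument and simply asserts that the proof in \cite{DZ2} carries over verbatim.

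However, your sketch of the ingredients contains two errors. First, Theorem \ref{lower-bound-scalar-curvature} does \emph{not} assume the upper linear decay (\ref{upper-linear-bound}); its only hypotheses are $\kappa$-noncollapsing, nonnegative curvature operator, positive Ricci curvature, and the existence of an equilibrium point. So you cannot invoke ``the hypothesised upper decay'' at all. Second, and more seriously, Lemma \ref{lem-pointwise curvature estimate} explicitly assumes the full two-sided linear decay (\ref{linear-decay-curvature}); since the lower bound in (\ref{linear-decay-curvature}) is precisely what you are trying to prove, citing that lemma here is circular. The paper is careful on this point: it says the \cite{DZ2} argument ``essentially depends on the Harnack inequality and the existence of equilibrium point''---and nothing else. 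The gradient estimate and the upper decay are not part of the input to this theorem; they belong to other parts of the paper where (\ref{linear-decay-curvature}) is already in force. Strip those two items out of your setup and the rest of your outline is fine.
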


\section{Diameter estimate of level sets}

By Lemma \ref{sphere-diffeo},    there is  a one parameter group of diffeomorphisms  $F_r:$ $~\mathbb{S}^{n-1}\to \Sigma_r\subseteq M$  $(r\ge r_0)$, which is generated by flow
 $$\frac{\partial F_r}{\partial r}=\frac{\nabla f}{|\nabla f|^2}. $$
Let $h_r=F_r^{\ast}(g)$ and $e_i=F_\ast(\bar{e}_i), e_j=F_\ast(\bar{e}_j)$, where $\bar{e}_i,\bar{e}_j\in T\mathbb{S}^{n-1}$.  Then,
\begin{align}\label{diff-flow}
\frac{\partial h_r}{\partial r}(\bar{e}_i,\bar{e}_j)=&{L}_{\frac{\nabla f}{|\nabla f|^2}}g(e_i,e_j)\notag\\
=&\langle \nabla_{e_i}(\frac{\nabla f}{|\nabla f|^2}),e_j\rangle+\langle \nabla_{e_j}(\frac{\nabla f}{|\nabla f|^2}),e_i\rangle\notag\\
=&\langle \frac{\nabla_{e_i}\nabla f}{|\nabla f|^2},e_j\rangle+\langle\nabla f,e_j\rangle\nabla_{e_i}(\frac{1}{|\nabla f|^2})\notag\\
+&\langle \frac{\nabla_{e_j}\nabla f}{|\nabla f|^2},e_i\rangle+\langle\nabla f,e_i\rangle\nabla_{e_j}(\frac{1}{|\nabla f|^2})\notag\\
=&\frac{2}{|\nabla f|^2}{\rm Ric}(e_i,e_j).
\end{align}
 (\ref{diff-flow}) is like  the Ricci flow for metrics   $h_r$ since $|\nabla f|^2(x)$ goes to a constant as $\rho(x)\to\infty$ under
 the condition (\ref{linear-decay-curvature}).  In this section, we use the argument in   \cite[Lemma 8.3 (b)]{Pe} to study the distance functions of $h_r$ in order  to get an  estimate of the diameter of $(\mathbb{S}^{n-1},h_r)$, i.e.,    the diameter of $(\Sigma_r,g)$. First, we prove

\begin{lem}\label{lem-differential inequality of distance function}
Let $(M,g,f)$ be a steady Ricci soliton  as in Theorem \ref{main theorem-2} with positive Ricci curvature. Then  for $x_1,x_2\in\mathbb{S}^{n-1}$
with $d_{r}(x_1,x_2)\ge 2\tau_0$, we have
\begin{align}
\frac{\rm d}{{\rm d}r}d_r(x_1,x_2)\le C(\frac{\tau_0}{r}+\frac{1}{\tau_0}+\frac{d_{r}(x_1,x_2)}{r^{3/2}}),
\end{align}
 where $d_r(\cdot,\cdot)$ is the distance function of $(\mathbb{S}^{n-1},h_r)$.
\end{lem}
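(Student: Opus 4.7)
The plan is to adapt Perelman's proof of Lemma 8.3(b) of \cite{Pe} to the metric flow (\ref{diff-flow}). Let $\gamma:[0,L]\to(\mathbb{S}^{n-1},h_r)$ be a unit-speed minimizing geodesic with $L=d_r(x_1,x_2)\ge 2\tau_0$; identify it via $F_r$ with a geodesic in $(\Sigma_r,g|_{\Sigma_r})$. For $r\ge r_0$, (\ref{identity}) and (\ref{linear-decay-curvature}) give $|\nabla f|^2=R_{\max}-R\ge R_{\max}/2$, so the first-variation formula combined with (\ref{diff-flow}) yields
$$\frac{d^+}{dr}d_r(x_1,x_2)\le\int_0^L\frac{{\rm Ric}_M(\dot\gamma,\dot\gamma)}{|\nabla f|^2}\,ds\le C\int_0^L{\rm Ric}_M(\dot\gamma,\dot\gamma)\,ds.$$
Everything thus reduces to estimating $\int_0^L{\rm Ric}_M(\dot\gamma,\dot\gamma)\,ds$. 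Splitting at $s=\tau_0$ and $s=L-\tau_0$, on the two end pieces the pointwise bound ${\rm Ric}_M\le R\le C/\rho\le C/r$ (using (\ref{inequality-cao chen}) to compare $\rho$ and $f=r$ on $\gamma$) contributes at most $C\tau_0/r$.

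On the middle piece I run Perelman's cutoff trick inside $(\Sigma_r,g|_{\Sigma_r})$: take orthonormal parallel fields $X_1,\ldots,X_{n-2}$ (with respect to $h_r$) along $\gamma$ perpendicular to $\dot\gamma$, and a piecewise-linear cutoff $\phi$ equal to $1$ on $[\tau_0,L-\tau_0]$ and linear to $0$ over each end interval of length $\tau_0$. Since $\gamma$ is minimizing in $\Sigma_r$, the second variation of length with variation $V_i=\phi X_i$ is nonnegative; summation yields
$$\int_{\tau_0}^{L-\tau_0}{\rm Ric}_{\Sigma_r}(\dot\gamma,\dot\gamma)\,ds\le\frac{2(n-2)}{\tau_0}.$$
The Gauss equation, combined with the soliton identification of the second fundamental form $\mathrm{II}(X,Y)={\rm Hess}(f)(X,Y)/|\nabla f|={\rm Ric}_M(X,Y)/|\nabla f|$ of $\Sigma_r$ (so $|\mathrm{II}|=O(1/r)$), gives
$${\rm Ric}_M(\dot\gamma,\dot\gamma)={\rm Ric}_{\Sigma_r}(\dot\gamma,\dot\gamma)+R_M(\nu,\dot\gamma,\nu,\dot\gamma)+O(r^{-2}),\quad\nu=\frac{\nabla f}{|\nabla f|},$$
so the $O(r^{-2})$ remainder contributes $O(L/r^2)$, absorbed into $CL/r^{3/2}$.

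The decisive step is a sharp $O(r^{-3/2})$ bound on the sectional curvature $R_M(\nu,\dot\gamma,\nu,\dot\gamma)$. Nonnegative sectional curvature gives $0\le R_M(\nu,\dot\gamma,\nu,\dot\gamma)\le{\rm Ric}_M(\nu,\nu)$. Differentiating (\ref{identity}) and applying the soliton equation yields the identity $2\,{\rm Ric}_M(\nabla f,\cdot)=-\nabla R$, so
$${\rm Ric}_M(\nu,\nu)=\frac{|\langle\nabla f,\nabla R\rangle|}{2|\nabla f|^2}\le\frac{|\nabla R|}{2|\nabla f|}\le\frac{CR^{3/2}}{|\nabla f|}\le\frac{C'}{r^{3/2}}$$
by Lemma \ref{lem-pointwise curvature estimate}. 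Integration contributes $CL/r^{3/2}$. Summing the three pieces gives $\int_0^L{\rm Ric}_M(\dot\gamma,\dot\gamma)\,ds\le C(\tau_0/r+1/\tau_0+L/r^{3/2})$, and the lemma follows. The main technical hurdle is this final $r^{-3/2}$ bound on $R_M(\nu,\dot\gamma,\nu,\dot\gamma)$, which crucially combines the soliton identity with the Shi-type gradient estimate $|\nabla R|\le C R^{3/2}$ of Lemma \ref{lem-pointwise curvature estimate}.
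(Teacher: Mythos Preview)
Your proposal is essentially the paper's proof, reorganized: both run Perelman's second-variation cutoff trick on the minimizing geodesic in $(\Sigma_r,h_r)$, relate the intrinsic Ricci $\overline{\rm Ric}$ to the ambient ${\rm Ric}_M$ via the Gauss equation, and control the discrepancy by the key $r^{-3/2}$ bound $R_M(\nu,\dot\gamma,\nu,\dot\gamma)\le{\rm Ric}_M(\nu,\nu)=|\langle\nabla R,\nabla f\rangle|/(2|\nabla f|^2)\le C|\nabla R|/|\nabla f|\le C r^{-3/2}$ coming from Lemma~\ref{lem-pointwise curvature estimate}. The paper does the second variation first (bounding $\int_0^L\overline{\rm Ric}$ with the $\tau_0/r$ endpoint correction built in), then converts to $\int_0^L{\rm Ric}_M$; you split off the end pieces first and second-variate on the middle, but the substance is identical.

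One minor wrinkle: your assertion that the cutoff argument yields $\int_{\tau_0}^{L-\tau_0}\overline{\rm Ric}(\dot\gamma,\dot\gamma)\,ds\le 2(n-2)/\tau_0$ is not literally what second variation gives---it gives $\int_0^L\phi^2\,\overline{\rm Ric}(\dot\gamma,\dot\gamma)\,ds\le 2(n-2)/\tau_0$, and $\overline{\rm Ric}$ is not a priori nonnegative on the end intervals. This is harmless since your own Gauss/curvature estimates show $\overline{\rm Ric}\ge{\rm Ric}_M-Cr^{-3/2}\ge -Cr^{-3/2}$, so the discarded endpoint contribution is $O(\tau_0 r^{-3/2})$ and absorbed; the paper handles the same point by first proving the upper bound $\overline{\rm Ric}\le C/r$ (its claim (3.4)) and keeping the endpoint terms explicitly.
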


\begin{proof}
Let  $\gamma$ be  a normalized minimal geodesic from $x_1$ to $x_2$ with velocity field $X(s)=\frac{{\rm d}\gamma}{{\rm d}s}$ and $V$  any piecewise smooth normal vector field  along $\gamma$  which  vanishes at the endpoints.  By the second variation formula, we have
\begin{align}
\int_{0}^{d_r(x_1,x_2)}(|\nabla_{X}V|^2+\langle \bar R(V,X)V,X\rangle){\rm d}s\ge0.
\end{align}
Let $\{e_i(s)\}_{i=1}^{n-1}$ be a parallel orthonormal frame along $\gamma$ that is perpendicular to $X$. Put $V_i(s)=f(s)e_i(s)$, where $f(s)$ is defined as
\begin{align}
f(s)=\frac{s}{\tau_0},&~\mbox{if}~ 0\le s\le \tau_0;\notag\\
f(s)=1,&~\mbox{if}~\tau_0\le s\le d_r(x_1,x_2)-\tau_0;\notag \\
f(s)=\frac{d_r(x_1,x_2)-s}{\tau_0},&~\mbox{if}~d_r(x_1,x_2)-\tau_0\le s\le d_r(x_1,x_2).\notag
\end{align}
Then $|\nabla_{X}V_i|=|f'(s)|$ and
\begin{align}
\int_{0}^{d_r(x_1,x_2)}|\nabla_XV_i|^2{\rm d}s=2\int_{0}^{\tau_0}\frac{1}{\tau_0^2}{\rm d}s=\frac{2}{\tau_0}.\notag
\end{align}
Moreover
\begin{align}
&\int_{0}^{d_r(x_1,x_2)}\langle \bar R(V_i,X)V_i,X\rangle{\rm d}s\notag\\
&=\int_{0}^{\tau_0}\frac{s^2}{\tau_0^2}\langle \bar R(e_i,X)e_i,X\rangle{\rm d}s
+\int^{d_r(x_1,x_2)-\tau_0}_{\tau_0}\langle \bar R(e_i,X)e_i,X\rangle{\rm d}s\notag\\
&+\int_{d_r(x_1,x_2)-\tau_0}^{d_r(x_1,x_2)}\frac{(d_r(x_1,x_2)-s)^2}{\tau_0^2}\langle \bar R(e_i,X)e_i,X\rangle{\rm d}s.\notag
\end{align}
Thus
\begin{align}\label{second-variation}
0\le &\sum_{i=1}^{n-1}\int_{0}^{d_r(x_1,x_2)}(|\nabla_{X}V_i|^2+\langle \bar R(V_i,X)V_i,X\rangle){\rm d}s\notag\\
=&\frac{2(n-1)}{r_0}-\int_{0}^{d_r(x_1,x_2)}\overline {\rm Ric}(X,X){\rm d}s+\int_{0}^{\tau_0}(1-\frac{s^2}{\tau_0^2})\overline {\rm Ric}(X,X){\rm d}s\notag\\
+&\int^{d_r(x_1,x_2)}_{d_r(x_1,x_2)-\tau_0}(1-\frac{(d_r(x_1,x_2)-s)^2}{\tau_0^2})\overline {\rm Ric}(X,X){\rm d}s.
\end{align}
We claim
\begin{align}\label{curvature-decay-2}
\overline {\rm Ric}(X,X)\le \frac{C}{r}\bar g(X,X),~\forall ~x\in \Sigma_r.
\end{align}

By Gauss formula, we have
\begin{align}
&{\rm Rm}(X,Y,Z,W)=\overline{\rm Rm}(X,Y,Z,W)\notag\\
&
+\langle B(X,Z),B(Y,W)\rangle-\langle B(X,W),B(Y,Z)\rangle,\notag
\end{align}
where $X,Y,Z,W\in T\Sigma_{r}$ and $B(X,Y)=(\nabla_{X}Y)^{\bot}$.  Since
\begin{align*}
B(X,Y)=&\langle \nabla_{X}Y,\nabla f\rangle\cdot\frac{\nabla f}{|\nabla f|^{2}}\\
=&[\nabla_{X}\langle Y,\nabla f\rangle-\langle Y,\nabla_{X}\nabla f\rangle]\cdot\frac{\nabla f}{|\nabla f|^{2}}\\
=&-{\rm Ric}(X,Y)\cdot\frac{\nabla f}{|\nabla f|^{2}},
\end{align*}
we get
\begin{align}\label{gauss-equa}
&{\rm Rm}(X,Y,Z,W)=
\overline{\rm Rm}(X,Y,Z,W)\notag\\
&+ \frac{1}{|\nabla f|^2}({\rm Ric}(X,Z){\rm Ric}(Y,W)- {\rm Ric}(X,W){\rm Ric}(Y,Z))
\end{align}
and
\begin{align}
&R_{ij}=\overline{R}_{ij}+R(\frac{\nabla f}{|\nabla f|},e_{i},e_{j},\frac{\nabla f}{|\nabla f|})-\frac{1}{|\nabla f|^{2}}\sum_k(R_{ij}R_{kk}-R_{ik}R_{kj}),\notag
\end{align}
where  indices  $i,j,k$ are corresponding to  vector fields  on $T\Sigma_{r}$.
Thus for  a unit vector $Y$, we derive
\begin{align}
({\rm Ric}-\overline{\rm Ric})(Y,Y)=&R(\frac{\nabla f}{|\nabla f|},Y,Y,\frac{\nabla f}{|\nabla f|})\notag\\
&-\frac{1}{|\nabla f|^2}\sum_{i=1}^{n-1}[{\rm Ric}(Y,Y){\rm Ric}(e_i,e_i)-{\rm Ric}^2(Y,e_i)].\notag
\end{align}
Note that
\begin{align}
R(\frac{\nabla f}{|\nabla f|},Y,Y,\frac{\nabla f}{|\nabla f|})\le& {\rm Ric}(\frac{\nabla f}{|\nabla f|},\frac{\nabla f}{|\nabla f|})
=\frac{|\langle \nabla R,\nabla f\rangle|}{|\nabla f|^2}\notag\\
\le &\frac{|\nabla R|}{|\nabla f|}
\le \frac{C}{r^{3/2}},\notag
\end{align}
and
\begin{align}
\frac{1}{|\nabla f|^2}|\sum_{i=1}^{n-1}[{\rm Ric}(Y,Y){\rm Ric}(e_i,e_i)-{\rm Ric}^2(Y,e_i)]|
\le \frac{R^2+|{\rm Ric}|^2}{|\nabla f|^2}\le \frac{C}{r^2}.\notag
\end{align}
Hence, we obtain
\begin{align}\label{diference-ricci}
| ({\rm Ric}-\overline{\rm Ric})(Y,Y) | \le  \frac{C}{r^{3/2}}, ~r\ge ~r_0.
\end{align}
In particular,
\begin{align}
 \overline {\rm Ric}(Y,Y)\le \frac{C_1}{ r}, ~r\ge ~r_0.\notag
\end{align}
This proves (\ref{curvature-decay-2}).

By  (\ref{second-variation}) and (\ref{curvature-decay-2}),  it is easy to see
\begin{align}\label{integral-bar-ricci}
\int_{0}^{d_r(x_1,x_2)}\overline {\rm Ric}(Y,Y){\rm d}s\le\frac{2(n-1)}{\tau_0}+\frac{4C\tau_0}{3r}.
\end{align}
Also by (\ref{diference-ricci}), we see
\begin{align}\label{ric-differen}\int_{0}^{d_r(x_1,x_2)}({\rm Ric}-\overline{\rm Ric})(Y,Y){\rm d}s\le \frac{C}{r^{3/2}}d_r(x_1,x_2).
\end{align}
On the other hand, if we
let $Y(s)=(F_r)_{\ast}(X(s))$ with $|Y(s)|_{(\Sigma_r,g)}\equiv 1$,  then by  (\ref{diff-flow}),  we have
\begin{align}
&\frac{{\rm d}}{{\rm d}r}d_r(x_1,x_2)\notag\\=&\frac{1}{2}\int_{0}^{d_r(x_1,x_2)}(L_{\frac{\nabla f}{|\nabla f|^2}}g)(Y,Y){\rm d}s\notag\\
=&\frac{1}{|\nabla f|^2}\int_{0}^{d_r(x_1,x_2)}\overline{\rm Ric}(Y,Y){\rm d}s+\frac{1}{|\nabla f|^2}\int_{0}^{d_r(x_1,x_2)}({\rm Ric}-\overline{\rm Ric})(Y,Y){\rm d}s\notag.
\end{align}
Thus inserting  (\ref{integral-bar-ricci}) and (\ref{ric-differen}) into the above relation,
we obtain
$$\frac{{\rm d}}{{\rm d}r}d_r(x_1,x_2)\le C(\frac{\tau_0}{r}+\frac{1}{\tau_0}+\frac{d_{r}(x_1,x_2)}{r^{3/2}}).$$

\end{proof}

\begin{cor}\label{cor-differential inequality of distance function}  Let $d_r(\cdot,\cdot)$ be the  distance function as in Lemma \ref{lem-differential inequality of distance function}.
Then  for any $x_1,x_2\in\mathbb{S}^{n-1}$, we have
\begin{align}
\frac{\rm d}{{\rm d}r}d_r(x_1,x_2)\le C(\frac{2}{\sqrt{r}}+\frac{d_r(x_1,x_2)}{r^{3/2}}),~r\ge r_0.
\end{align}
\end{cor}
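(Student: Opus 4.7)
The plan is to derive the corollary from Lemma \ref{lem-differential inequality of distance function} by an optimal choice of the free parameter $\tau_0$, and then to dispense with the hypothesis $d_r(x_1,x_2) \ge 2\tau_0$ by treating the small-diameter regime by a direct (and easier) calculation.

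More concretely, I would first take $\tau_0 = \sqrt{r}$ in the conclusion of Lemma \ref{lem-differential inequality of distance function}. This balances the first two terms of the bound in an obvious way, since
\[
\frac{\tau_0}{r} + \frac{1}{\tau_0} = \frac{1}{\sqrt r} + \frac{1}{\sqrt r} = \frac{2}{\sqrt r}.
\]
Hence, whenever $d_r(x_1,x_2) \ge 2\sqrt{r}$, the lemma immediately yields the desired inequality
\[
\frac{d}{dr} d_r(x_1,x_2) \le C\Bigl(\frac{2}{\sqrt{r}} + \frac{d_r(x_1,x_2)}{r^{3/2}}\Bigr).
\]

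In the complementary range $d_r(x_1,x_2) < 2\sqrt{r}$, the hypothesis of Lemma \ref{lem-differential inequality of distance function} is not satisfied, but one can argue directly using the identity
\[
\frac{d}{dr} d_r(x_1,x_2) = \int_0^{d_r(x_1,x_2)} \frac{{\rm Ric}(Y,Y)}{|\nabla f|^2}\, ds,
\]
which was obtained in the proof of that lemma (here $Y$ is the unit velocity field of a minimizing geodesic in $(\Sigma_r,g)$). Since $(M,g)$ has nonnegative Ricci curvature, for any unit vector $Y$ we have ${\rm Ric}(Y,Y) \le R \le C/r$ by the linear decay hypothesis (\ref{linear-decay-curvature}); moreover, the identity (\ref{identity}) together with $R \to 0$ gives $|\nabla f|^2 \ge R_{\max}/2$ for $r$ large. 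Combining these two bounds and using $d_r(x_1,x_2) < 2\sqrt{r}$, we obtain
\[
\frac{d}{dr} d_r(x_1,x_2) \le \frac{C}{r}\cdot d_r(x_1,x_2) \le \frac{2C}{\sqrt{r}},
\]
which is dominated by the right-hand side of the corollary (up to enlarging $C$). Putting the two cases together yields the claim.

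There is no real obstacle here; the proof is essentially a balancing argument in the parameter $\tau_0$. The only minor point to verify is that the direct estimate in the regime $d_r(x_1,x_2) < 2\sqrt r$ matches the form of the right-hand side, which is immediate once one uses the decay $R \le C/r$ and the lower bound on $|\nabla f|^2$.
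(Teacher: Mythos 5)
Your proposal is correct and follows essentially the same route as the paper: take $\tau_0=\sqrt{r}$ in Lemma \ref{lem-differential inequality of distance function} when $d_r(x_1,x_2)\ge 2\sqrt{r}$, and in the complementary case use the first-variation identity together with ${\rm Ric}(Y,Y)\le R\le C/r$ and the lower bound on $|\nabla f|^2$ to get $\tfrac{C}{r}d_r(x_1,x_2)\le \tfrac{2C}{\sqrt r}$. Nothing further is needed.
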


\begin{proof}
If $d_r(x_1,x_2)\ge 2\sqrt{r}$,    the corollary follows from Lemma \ref{lem-differential inequality of distance function} by taking $\tau_0=\sqrt{r}$. If $d_r(x_1,x_2)< 2\sqrt{r}$,    we have
\begin{align}
\frac{{\rm d}}{{\rm d}r}d_r(x_1,x_2)=&\frac{1}{2}\int_{0}^{d_r(x_1,x_2)}(L_{\frac{\nabla f}{|\nabla f|^2}}g)(Y,Y){\rm d}s\notag\\
=&\frac{1}{|\nabla f|^2}\int_{0}^{d_r(x_1,x_2)} {\rm Ric}(Y,Y){\rm d}s\notag\\
\le&\frac{C}{r}d_r(x_1,x_2)
\le\frac{2C}{\sqrt{r}}.\notag
\end{align}
The corollary is proved.
\end{proof}

By Corollary \ref{cor-differential inequality of distance function},  we get the following  diameter estimate for  $(\Sigma_r,\bar g)$.

\begin{prop}\label{theorem-diameter estimate}Let $(M,g,f)$ be a steady Ricci soliton  as in Theorem \ref{main theorem-2} with positive Ricci curvature. Then
there exists a constant $C$ independent of $r$ such that
\begin{align}
{\rm diam}(\Sigma_r,g)\le C\sqrt{r},~\forall~r\ge r_0.
\end{align}

\end{prop}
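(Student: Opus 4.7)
The plan is to integrate Corollary \ref{cor-differential inequality of distance function} into a differential inequality for the diameter function $D(r) := \mathrm{diam}(\Sigma_r, g) = \mathrm{diam}(\mathbb{S}^{n-1},h_r)$, and then close the bound via a linear Gronwall estimate.

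First, I would upgrade the pointwise inequality in Corollary \ref{cor-differential inequality of distance function} to an integral inequality for $D$. Fix $r_2 > r_1 \ge r_0$; since $\mathbb{S}^{n-1}$ is compact, choose $x_1^{*}, x_2^{*}\in\mathbb{S}^{n-1}$ with $d_{r_2}(x_1^{*},x_2^{*}) = D(r_2)$. Because $d_{r_1}(x_1^{*},x_2^{*})\le D(r_1)$,
$$D(r_2)-D(r_1) \le d_{r_2}(x_1^{*},x_2^{*}) - d_{r_1}(x_1^{*},x_2^{*}) = \int_{r_1}^{r_2}\frac{d}{dr}d_r(x_1^{*},x_2^{*})\,dr,$$
and Corollary \ref{cor-differential inequality of distance function} together with $d_r(x_1^{*},x_2^{*})\le D(r)$ produce
$$D(r_2)-D(r_1) \le \int_{r_1}^{r_2}C\Bigl(\frac{2}{\sqrt{r}} + \frac{D(r)}{r^{3/2}}\Bigr)\,dr.$$
It follows that $D$ is locally absolutely continuous on $[r_0,\infty)$ and that $D'(r)\le 2C/\sqrt{r} + CD(r)/r^{3/2}$ almost everywhere.

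Next I would apply the integrating factor $\mu(r) := \exp(2C/\sqrt{r})$, which satisfies $\mu'(r) = -(C/r^{3/2})\,\mu(r)$ and $1 \le \mu(r) \le e^{2C/\sqrt{r_0}}$ for $r\ge r_0$. Multiplying the differential inequality by $\mu$ gives
$$(\mu D)'(r) = \mu(r)\Bigl(D'(r) - \frac{C}{r^{3/2}}D(r)\Bigr) \le \frac{2C\,\mu(r)}{\sqrt{r}} \le \frac{2C\,e^{2C/\sqrt{r_0}}}{\sqrt{r}}$$
for a.e.\ $r\ge r_0$. Integrating from $r_0$ to $r$ and using $\mu(r)\ge 1$ yields
$$D(r) \le e^{2C/\sqrt{r_0}}D(r_0) + 4C\,e^{4C/\sqrt{r_0}}\bigl(\sqrt{r}-\sqrt{r_0}\bigr).$$
Since $\Sigma_{r_0}$ is compact and therefore $D(r_0)<\infty$, and since $\sqrt{r}\ge\sqrt{r_0}$, the right side is bounded by $C'\sqrt{r}$ for a constant $C'$ depending only on $C$, $r_0$ and $D(r_0)$, which is the desired bound.

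The only subtle point is the passage from the pointwise bound on $\frac{d}{dr}d_r(x_1,x_2)$ to a differential inequality for the supremum $D(r)$; this is handled cleanly by evaluating at a pair of diameter-realizing points at the \emph{later} time $r_2$, so that the $D(r_1)$ term appears with the correct sign. Once this is done, the remainder is a textbook application of Gronwall to a linear ODE with decaying coefficient $C/r^{3/2}$, and I do not anticipate any further difficulty.
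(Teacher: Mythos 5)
Your proof is correct and follows essentially the same route as the paper: both reduce the statement to Corollary \ref{cor-differential inequality of distance function} and close via a Gronwall estimate with the integrating factor coming from the $r^{-3/2}$ coefficient. The only difference is cosmetic: the paper applies Gronwall to $d_r(x_1,x_2)$ for each fixed pair and takes the supremum at the very end, which sidesteps the (correctly handled, but avoidable) issue of differentiating the supremum $D(r)$.
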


\begin{proof}
For any fixed $x_1,x_2\in \mathbb{S}^{n-1}$, by Corollary \ref{cor-differential inequality of distance function}, we have
\begin{align}
d_r(x_1,x_2)\le &e^{\int_{r_0}^r\tau^{-3/2}{\rm d}\tau}(d_{r_0}(x_1,x_2)+\int_{r_0}^r\frac{2C}{\sqrt{\tau}}e^{-\int_{r_0}^{\tau}s^{-3/2}{\rm d}s}{\rm d}\tau)\notag\\
\le& 4C(\sqrt{r}-\sqrt{r_0})+\frac{2}{\sqrt{r_0}}\cdot {\rm diam}(\mathbb{S}^{n-1},h_{r_0}).\notag
\end{align}
Thus
\begin{align}
{\rm diam}(\Sigma_r,g)={\rm diam}(\mathbb{S}^{n-1},h_{r})\le 4C'\sqrt{r},~r\ge r_0.\notag
\end{align}

\end{proof}

As a corollary of Proposition \ref{theorem-diameter estimate},  we get

\begin{cor}\label{set-mr-contain-2} Let $(M,g,f)$ be a steady Ricci soliton  as in Theorem \ref{main theorem-2} with positive Ricci curvature. Then
 there exists  a uniform constant $C_0>0$ such that the following is true:
  for any $k\in\mathbb{N}$, there exists $\bar r_0=\bar r_0(k)$ such that
\begin{align}
M_{p,k}\subset B(p,C_0+\frac{2k}{\sqrt{R_{\max}}} ; R(p)g), ~\forall ~ \rho(p)\ge  \bar r_0.
\end{align}
\end{cor}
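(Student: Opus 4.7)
The plan is to fix $x \in M_{p,k}$ and construct a path from $p$ to $x$ consisting of two pieces: a \emph{vertical} segment along an integral curve of $\nabla f/|\nabla f|^2$ that takes $x$ to its projection $x' \in \Sigma_{f(p)}$, followed by a \emph{horizontal} segment inside $\Sigma_{f(p)}$ from $x'$ to $p$. I would estimate the two lengths separately in $g$, rescale to $R(p)g$, and then apply the triangle inequality.

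For the vertical piece, the Cao--Chen estimate (\ref{inequality-cao chen}) together with the upper bound in (\ref{linear-decay-curvature}) forces $R \to 0$ at infinity, hence $|\nabla f|^2 = R_{\max} - R \to R_{\max}$ uniformly. I would choose $\bar r_0(k)$ large enough so that for $\rho(p) \geq \bar r_0(k)$: (a)~$f(p) - k/\sqrt{R(p)} > f(o)$, so that the flow $F_r$ of Section 3 is well-defined on all of $M_{p,k}$ and carries $x$ to a point $x' \in \Sigma_{f(p)}$; and (b)~$|\nabla f| \geq \tfrac{1}{2}\sqrt{R_{\max}}$ throughout $M_{p,k}$. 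The length of the flow line from $x$ to $x'$ in $g$ is then at most
\begin{align*}
|f(p)-f(x)| \cdot \sup_{M_{p,k}} |\nabla f|^{-1} \;\leq\; \frac{k}{\sqrt{R(p)}} \cdot \frac{2}{\sqrt{R_{\max}}} \;=\; \frac{2k}{\sqrt{R_{\max}\,R(p)}},
\end{align*}
which rescales to $2k/\sqrt{R_{\max}}$ in $R(p)g$. For the horizontal piece, Proposition \ref{theorem-diameter estimate} gives $d_g(p,x') \leq \mathrm{diam}(\Sigma_{f(p)},g) \leq C\sqrt{f(p)}$, and combining $f(p) \leq c_2\rho(p)$ from (\ref{inequality-cao chen}) with the upper bound in (\ref{linear-decay-curvature}) shows that $R(p)f(p)$ is bounded by a universal constant; consequently $d_{R(p)g}(p,x') = \sqrt{R(p)}\,d_g(p,x')$ is bounded by a universal constant, which supplies the ``$C_0$'' of the statement. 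The triangle inequality then produces the claimed inclusion.

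The main technical point is the dependence $\bar r_0 = \bar r_0(k)$. Using $f \geq c_1\rho$ and the lower bound on $R$ from (\ref{linear-decay-curvature}), condition (a) amounts to $c_1\rho(p) - \mathrm{const}\cdot k\sqrt{\rho(p)} > f(o)$, which forces $\rho(p)$ to be large relative to $k^2$; condition (b) requires the pointwise bound $R \leq \tfrac{3}{4}R_{\max}$ throughout $M_{p,k}$, and since every $x \in M_{p,k}$ satisfies $\rho(x) \gtrsim (c_1\rho(p)-O(k\sqrt{\rho(p)}))/c_2$ by (\ref{inequality-cao chen}), this again forces $\rho(p) \gtrsim k^2$ together with the upper decay. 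Beyond this book-keeping, the proof is essentially a direct consequence of the diameter estimate of Proposition \ref{theorem-diameter estimate} and the soliton identity $|\nabla f|^2 + R = R_{\max}$.
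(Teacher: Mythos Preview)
Your proposal is correct and follows essentially the same route as the paper: split the distance from $p$ to any $q\in M_{p,k}$ into a horizontal piece inside $\Sigma_{f(p)}$ (controlled by Proposition \ref{theorem-diameter estimate} together with the bound $R(p)f(p)\le C$) and a vertical piece along an integral curve of $\nabla f$ (controlled by $|\nabla f|^2\ge \tfrac12 R_{\max}$ on $M_{p,k}$). The only cosmetic difference is that the paper parametrizes the vertical segment by the flow $\phi_s$ of $-\nabla f$ rather than by $F_r$, and it leaves the $k$-dependence of $\bar r_0$ implicit where you spell it out.
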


\begin{proof}
By Proposition  \ref{theorem-diameter estimate} and  (\ref{linear-decay-curvature}),
 it is easy to see that
\begin{align}\label{mr-set}
\Sigma_{f(p)}\subset B(p,C_0;  R(p)g)
\end{align}
for some uniform  constant $C_0$ as long as  $f(p)$ is large enough.
 Note that by (\ref{identity}),
\begin{align}
\frac{R_{\max}}{2}\le |\nabla f|^2(x)\le R_{\max},~\forall~x\in M_{p,k}, ~\rho(p)\ge r_0.\notag
\end{align}
Since  there exists  $q^{\prime}\in \Sigma_{f(p)}$  for  any $q\in M_{p,k}$ such that $\phi_{s}(q)=q^{\prime}$ for some $s\in \mathbb{R}$,  we derive
\begin{align*}
d(p,q)\leq& d(p, q^{\prime})  + d(q^{\prime}, q)\\
\leq& {\rm diam}(\Sigma_{f(p)},g)+\mathcal{L}(\phi_{\tau}|_{[0,s]})\\
\leq& C_0R^{-\frac{1}{2}}(p)+|\int_{0}^{s}|\frac{d\phi_{\tau}(q)}{d\tau}|d\tau|\\
=& C_0R^{-\frac{1}{2}}(p)+\int_{0}^{s}|\nabla f(\phi_{\tau}(q))|d\tau\\
\le& C_0R^{-\frac{1}{2}}(p)+\int_{0}^{s}|\nabla f(\phi_{\tau}(q))|^{2}\cdot \frac{2}{\sqrt{R_{\max}}}d\tau\\
=& C_0R^{-\frac{1}{2}}(p)+|\int_{0}^{s}\frac{d(f(\phi_{\tau}(q)))}{d\tau}\cdot \frac{2}{\sqrt{R_{\max}}}d\tau|
\end{align*}
\begin{align*}
\leq& C_0R^{-\frac{1}{2}}(p)+|f(q)-f(p)|\cdot \frac{2}{\sqrt{R_{\max}}}\\
\leq& \Big(C_0+\frac{2k}{\sqrt{R_{\max}}}\Big)\cdot \frac{1}{\sqrt{R(p)}}.
\end{align*}
This implies
\begin{align*}
M_{p,k}\subset B(p,C_0+\frac{2k}{\sqrt{R_{\max}}} ; R(p)g).
\end{align*}

\end{proof}

Corollary  \ref{set-mr-contain-2}  will be used in the next Section.

\section{Proof of Theorem \ref{main theorem-2}-I }

In this section, we prove Theorem \ref{main theorem-2}  for steady Ricci solitons with positive Ricci curvature as follows.

\begin{theo}\label{main theorem-strong}
Let $(M,g,f)$ be a   $\kappa$-noncollapsed steady Ricci soliton with nonnegative sectional curvature and positive Ricci curvature.
Suppose that (\ref{linear-decay-curvature}) is satisfied.  Then for any $p_{i}\rightarrow+\infty$,
 rescaled flows $(M, R(p_i)g(R^{-1}(p_i)t),$ $p_{i})$ converge subsequently to
$(\mathbb{R}\times \mathbb{S}^{n-1},ds^2+g_{\mathbb{S}^{n-1}}(t))$ ( $t\in (-\infty,0]$) in the Cheeger-Gromov topology, where
 $(\mathbb{S}^{n-1},$ $g_{\mathbb{S}^{n-1}}(t))$ is a $\kappa$-noncollapsed ancient Ricci flow with nonnegative sectional curvature. Moreover,  scalar curvature of $g_{\mathbb{S}^{n-1}}(t))$ satisfies (\ref{curvature-decay-sn}).
\end{theo}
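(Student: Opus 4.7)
My plan is to run a Cheeger--Gromov compactness for the rescaled flows $g_{p_i}(t) = R(p_i)\,\phi^*_{R^{-1}(p_i)t}g$, pass the soliton potential $f$ to the limit via a suitable rescaling, and exploit the soliton identity $|\nabla f|^2+R\equiv R_{\max}$ together with the linear decay (\ref{linear-decay-curvature}) to produce a nonzero parallel vector field on the limit space. This will force a de Rham splitting into $\mathbb{R}\times \mathbb{S}^{n-1}$ at every time.

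\emph{Step 1 (uniform curvature and compactness).} Fix $A>0$. For any $x \in B(p_i,A;g_{p_i}(0))$, Corollary~\ref{set-mr-contain-2} together with (\ref{linear-decay-curvature}) and (\ref{inequality-cao chen}) yields $\rho(x)\sim \rho(p_i)\sim 1/R(p_i)$, hence $R_{g_{p_i}(0)}(x)\le C(A)$; all derivatives are controlled by Remark~\ref{rem-pointwise curvature estimate}. For $t<0$ the trajectory $\phi_{R^{-1}(p_i)t}(x)$ drifts away from the equilibrium, so by monotonicity (\ref{increasing of R}) the original scalar curvature at that point is at most $R(x)$, giving $R_{g_{p_i}(t)}(x)\le C(A)$ at points of the same underlying ball. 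Since nonnegative Ricci forces $B(p_i,A;g_{p_i}(t))\subseteq B(p_i,A;g_{p_i}(0))$ for $t\le0$, the bound persists on the evolving ball. Combined with $\kappa$-noncollapsing and Shi's higher order estimates, Hamilton's compactness theorem extracts a smooth subsequential limit $(M_\infty, g_\infty(t), p_\infty)$, a $\kappa$-noncollapsed ancient Ricci flow on $(-\infty,0]$ with nonnegative sectional curvature.

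\emph{Step 2 (parallel gradient and splitting).} Define $\tilde f_i(x):=\sqrt{R(p_i)}(f(x)-f(p_i))$. Since the Christoffel symbols are unchanged under constant conformal rescaling, one computes
\begin{equation*}
|\tilde\nabla \tilde f_i|^2_{g_{p_i}(0)} = |\nabla f|^2_g = R_{\max}-R \longrightarrow R_{\max},\quad |\tilde\nabla^2\tilde f_i|^2_{g_{p_i}(0)} = R(p_i)^{-1}|\mathrm{Ric}|^2_g \le C\,R(p_i)\longrightarrow 0
\end{equation*}
locally uniformly, using $|\mathrm{Ric}|^2\le R^2\le C^2 R(p_i)^2$ from nonnegativity of Ricci. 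Higher $C^k$ bounds on $\tilde f_i$ follow from Remark~\ref{rem-pointwise curvature estimate}. Therefore $\tilde f_i\to\tilde f_\infty$ in $C^\infty_{\mathrm{loc}}$ with $|\tilde\nabla\tilde f_\infty|^2\equiv R_{\max}$ and $\tilde\nabla^2\tilde f_\infty\equiv 0$. Hence $\tilde\nabla\tilde f_\infty/\sqrt{R_{\max}}$ is a unit parallel vector field on $M_\infty$, and the de Rham splitting theorem gives $M_\infty\cong \mathbb{R}\times N$ with $g_\infty(t)=ds^2+h(t)$; the flat $\mathbb{R}$-factor is preserved by Ricci flow, so $h(t)$ is a $\kappa$-noncollapsed ancient Ricci flow on $N$ with nonnegative sectional curvature.

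\emph{Step 3 (topology and scalar curvature decay).} The zero level set of $\tilde f_i$ is $\Sigma_{f(p_i)}\cong\mathbb{S}^{n-1}$ by Lemma~\ref{sphere-diffeo}, and its rescaled diameter is bounded by $\sqrt{R(p_i)}\cdot C\sqrt{f(p_i)}\le C'$ thanks to Proposition~\ref{theorem-diameter estimate} and (\ref{inequality-cao chen}). Smooth convergence of $\tilde f_i$ with nonvanishing limit gradient and the implicit function theorem then identify $N$ with the smooth limit of these level sets, forcing $N\cong\mathbb{S}^{n-1}$. For the curvature bound (\ref{curvature-decay-sn}), track $q_i(t):=\phi_{R^{-1}(p_i)t}(p_i)$ for $t<0$: since $\frac{d}{d\tau}f(\phi_\tau p_i)=-|\nabla f|^2\to -R_{\max}$ at infinity, $f(q_i(t))=f(p_i)+R_{\max}R^{-1}(p_i)|t|(1+o(1))$; combining with (\ref{inequality-cao chen}), $\rho(p_i)\sim C'/R(p_i)$, and the upper bound in (\ref{linear-decay-curvature}),
\begin{equation*}
R_{g_{p_i}(t)}(p_i)=\frac{R(q_i(t))}{R(p_i)}\le \frac{C_0/\rho(q_i(t))}{R(p_i)}\le\frac{C}{1+R_{\max}|t|},
\end{equation*}
which passes in the limit to $R_{h(t)}\le C/|t|$ since the scalar curvature is constant on each $t$-slice of the cylinder.

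The principal obstacle is Step~1: securing uniform curvature bounds on the evolving balls $B(p_i,A;g_{p_i}(t))$ over the entire backward half-line $(-\infty,0]$, which requires carefully interleaving monotonicity of $R$ along the soliton vector field, non-decrease of backward distances under nonnegative Ricci, and the derivative control from Shi's estimates. Once this compactness is secured, the algebraic identities of the soliton (together with Proposition~\ref{theorem-diameter estimate}) do the rest of the work.
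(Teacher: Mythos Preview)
Your overall strategy matches the paper's: Hamilton compactness from the linear decay plus monotonicity of $R$ along the soliton flow, splitting via the parallel limit of $\nabla f$ (you package this through the rescaled potential $\tilde f_i$, the paper works with the vector field $X_{(i)}=R(p_i)^{-1/2}\nabla f$ directly; these are equivalent, and your use of $\tilde f_\infty$ to get the global product structure is arguably cleaner than the paper's line argument via Corollary~\ref{set-mr-contain-2}), identification $N\cong\mathbb{S}^{n-1}$ through the level sets together with the diameter bound, and extraction of the $C/|t|$ decay by tracking the trajectory of $\phi_t$.

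There is one genuine gap, in the last line of Step~3. The scalar curvature of $ds^2+h(t)$ at $(s,q)$ equals $R_{h(t)}(q)$: it is constant in $s$ but \emph{not} constant in $q$, so your bound at the single basepoint $p_\infty$ does not by itself give (\ref{curvature-decay-sn}) on all of $N$. The paper's Lemma~\ref{lem-decay of t} repairs this by running the same trajectory estimate for an arbitrary $x_i\to x\in M_\infty$: since $x_i\in B(p_i,2\bar r;g_{p_i}(0))\subset M_{p_i,2\bar r\sqrt{R_{\max}}}$ by Lemma~\ref{geodesic ball in level set}, one has $f(x_i)\to\infty$, and your computation for $p_i$ applies verbatim to $x_i$, giving $R_{g_{p_i}(t)}(x_i)\le C/(R(p_i)+c|t|)$ and hence the pointwise bound in the limit. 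A minor citation issue in Step~1: the inclusion you actually need is $B(p_i,A;g_{p_i})\subset M_{p_i,A\sqrt{R_{\max}}}$, which is Lemma~\ref{geodesic ball in level set}; Corollary~\ref{set-mr-contain-2} is the reverse containment.
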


We need several lemmas to prepare for the proof of  Theorem \ref{main theorem-strong}.   First  we give a volume comparison for  level sets $\Sigma_r$.

\begin{lem}
Under the condition of Theorem \ref{main theorem-strong}, for any small $\varepsilon>0$,  there is a $r_0>0$ such that for any $s\in[-1,1]$,
\begin{align}\label{volume-comparison-sigmar}
1-\varepsilon\le \frac{{\rm vol}(\Sigma_{r+s\sqrt{r}},g)}{{\rm vol}(\Sigma_r,g)}\le 1+\varepsilon, ~ r\ge ~r_0.
\end{align}
\end{lem}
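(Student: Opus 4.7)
The plan is to pull the problem back to $\mathbb{S}^{n-1}$ via the diffeomorphisms $F_r$ and use the evolution equation (\ref{diff-flow}) for $h_r$. Writing ${\rm vol}(\Sigma_r,g)=\int_{\mathbb{S}^{n-1}}\sqrt{\det h_r}\,d\mu_{\mathbb{S}^{n-1}}$ and differentiating under the integral sign via the standard identity $\partial_r\sqrt{\det h_r}=\tfrac12 {\rm tr}_{h_r}(\partial_r h_r)\sqrt{\det h_r}$, equation (\ref{diff-flow}) yields
\begin{align*}
\frac{d}{dr}{\rm vol}(\Sigma_r,g)=\int_{\Sigma_r}\frac{{\rm tr}_{\Sigma_r}({\rm Ric})}{|\nabla f|^2}\,d\sigma_{\Sigma_r}=\int_{\Sigma_r}\frac{R-{\rm Ric}(\nu,\nu)}{|\nabla f|^2}\,d\sigma_{\Sigma_r},
\end{align*}
where $\nu=\nabla f/|\nabla f|$ is the unit normal to $\Sigma_r$.

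Next, I will estimate the integrand. Since the sectional curvature is nonnegative, ${\rm Ric}(\nu,\nu)\ge 0$ and hence $0\le R-{\rm Ric}(\nu,\nu)\le R$. The upper linear decay (\ref{linear-decay-curvature}) together with the Cao--Chen relation (\ref{inequality-cao chen}) gives that every point $x\in \Sigma_r$ satisfies $\rho(x)\ge c_2^{-1}r$, hence $R(x)\le C/r$ uniformly on $\Sigma_r$ for $r\ge r_0$. The soliton identity (\ref{identity}) guarantees $|\nabla f|^2\ge R_{\max}/2$ once $\rho$ is large enough. Combining these, for $r\ge r_0$ large,
\begin{align*}
0\le \frac{d}{dr}{\rm vol}(\Sigma_r,g)\le \frac{C}{r}\,{\rm vol}(\Sigma_r,g).
\end{align*}

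Finally, I integrate this differential inequality on any subinterval of $[r-\sqrt r,r+\sqrt r]$. Since the derivative is nonnegative, the monotonicity of $r\mapsto{\rm vol}(\Sigma_r,g)$ already yields the lower bound in (\ref{volume-comparison-sigmar}) when $s\ge 0$ and the upper bound when $s\le 0$. The remaining directions follow by Gr\"onwall:
\begin{align*}
\Big|\log\frac{{\rm vol}(\Sigma_{r+s\sqrt r},g)}{{\rm vol}(\Sigma_r,g)}\Big|\le \int_{r-\sqrt r}^{r+\sqrt r}\frac{C}{\tau}\,d\tau\le \frac{C'}{\sqrt r},
\end{align*}
which tends to zero as $r\to\infty$, yielding (\ref{volume-comparison-sigmar}) for any prescribed $\varepsilon>0$ and sufficiently large $r_0$.

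There is no real obstacle here; the only subtlety is verifying that the scalar curvature bound $R\le C/r$ holds \emph{uniformly} on the whole level set $\Sigma_r$, which is immediate from the Cao--Chen comparison between $f$ and $\rho$, and that $|\nabla f|^2$ is bounded below away from $0$, which follows directly from (\ref{identity}) and the decay of $R$.
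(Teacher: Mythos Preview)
Your proof is correct and follows essentially the same strategy as the paper: use the evolution equation (\ref{diff-flow}) together with the bound $|{\rm Ric}|\le C/r$ on $\Sigma_r$ (coming from the linear decay and the Cao--Chen comparison (\ref{inequality-cao chen})) to show that the level-set geometry changes by $O(r^{-1/2})$ over the window $[r-\sqrt r,\,r+\sqrt r]$. The only cosmetic difference is that the paper controls $h_r(V,V)$ pointwise for each tangent vector $V$ and then passes to $\det h_r$, whereas you differentiate the total volume directly via the trace; the underlying estimate is identical.
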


\begin{proof}
Let  $V\in T\mathbb{S}^{n-1}$ be  any fixed nonzero vector.  By (\ref{diff-flow}), we have
\begin{align}
|\frac{\partial}{\partial r}h_r(V,V)|=&\frac{2}{|\nabla f|^2}|{\rm Ric}((F_r)_{\ast}V,(F_r)_{\ast}V)|
\le\frac{C}{r}h_r(V,V).\notag
\end{align}
Thus
\begin{align}
\frac{r_1^C}{r_2^C}\le \frac{h_{r_2}(V,V)}{h_{r_1}(V,V)}\le \frac{r_2^C}{r_1^C},~\forall~0\le r_1\le r_2.\notag
\end{align}
It follows
\begin{align}
\frac{h_{r+s\sqrt{r}}(V,V)}{h_{r}(V,V)}\to 1,~\mbox{as}~r\to\infty\notag
\end{align}
and
\begin{align}
\frac{\det (h_{r+s\sqrt{r}})}{\det(h_{r})}\to 1,~\mbox{as}~r\to\infty.\notag
\end{align}
Hence (\ref{volume-comparison-sigmar}) follows.
\end{proof}

Let $ g_p=R(p)g$ be a rescaled metric of $(M,g)$. Then

\begin{lem}\label{lem-volume lower bound}
Under the condition of Theorem \ref{main theorem-strong}, there exists a constant $C(\kappa)$ such that
\begin{align}\label{volume-lower-bound}
{\rm vol}(\Sigma_{f(p)},{g}_p)\ge C(\kappa), ~{\rm if}f(p)\ge r_0.
\end{align}

\end{lem}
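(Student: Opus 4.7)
The plan is to deduce the lower bound on ${\rm vol}(\Sigma_{f(p)}, g_p)$ from a lower bound on the rescaled volume of the slab $M_{p,1}$, obtained by applying the co-area formula in the metric $g_p = R(p)g$, together with a lower bound on the rescaled volume of the geodesic ball $B(p, 1/\sqrt{R_{\max}}; g_p)$ coming from the $\kappa$-noncollapsed hypothesis.

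First, by the co-area formula in $g_p$,
\begin{align*}
{\rm vol}(M_{p,1}, g_p) = \int_{f(p) - 1/\sqrt{R(p)}}^{f(p) + 1/\sqrt{R(p)}} \int_{\Sigma_r} \frac{d\sigma_{g_p}}{|\nabla f|_{g_p}} \, dr.
\end{align*}
The soliton identity $|\nabla f|^2 + R = R_{\max}$ gives $|\nabla f|_{g_p}^2 = (R_{\max} - R)/R(p)$, and the argument in the proof of Lemma \ref{lem-pointwise curvature estimate} shows $R(x) \le C' R(p)$ on $M_{p,1}$. Since $R(p) \to 0$ as $f(p) \to \infty$, we get $|\nabla f|_{g_p}^2 = (1+o(1)) R_{\max}/R(p)$ uniformly on $M_{p,1}$. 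Moreover, the pointwise estimate $|\partial_r h_r|_{h_r} \le C/r$ used in the proof of (\ref{volume-comparison-sigmar}) yields $\det h_r / \det h_{f(p)} = 1 + o(1)$ uniformly for $r$ in the integration range, since $|r - f(p)|/f(p) \le 1/(f(p)\sqrt{R(p)}) \to 0$ by the linear decay (\ref{linear-decay-curvature}) together with (\ref{inequality-cao chen}). Hence ${\rm vol}(\Sigma_r, g_p) = (1+o(1)){\rm vol}(\Sigma_{f(p)}, g_p)$ throughout the integration range, and substituting yields
\begin{align*}
{\rm vol}(M_{p,1}, g_p) = (1 + o(1)) \cdot \frac{2}{\sqrt{R_{\max}}} \cdot {\rm vol}(\Sigma_{f(p)}, g_p).
\end{align*}

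Next, Lemma \ref{geodesic ball in level set} with $k = 1$ gives $B(p, 1/\sqrt{R_{\max}}; g_p) \subset M_{p,1}$ once $f(p)$ is large enough that $f(p) - 1/\sqrt{R(p)} > f(o)$. On this ball, the proof of Lemma \ref{lem-pointwise curvature estimate} shows that $R_{g_p}$ is uniformly bounded, and since the sectional curvature is nonnegative, $|{\rm Rm}|_{g_p}$ is bounded there as well. The $\kappa$-noncollapsed hypothesis therefore yields
\begin{align*}
{\rm vol}(B(p, 1/\sqrt{R_{\max}}; g_p), g_p) \ge \kappa \cdot R_{\max}^{-n/2}.
\end{align*}
Combining with the previous display gives ${\rm vol}(\Sigma_{f(p)}, g_p) \ge \kappa/(2 R_{\max}^{(n-1)/2}) =: C(\kappa) > 0$ uniformly for $f(p) \ge r_0$.

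The main technical point is the careful bookkeeping of the factors of $R(p)$ under rescaling and the extension of the ratio estimate underlying (\ref{volume-comparison-sigmar}) from the stated range $|s|\le 1$ to the longer range $|r-f(p)|\le 1/\sqrt{R(p)} \sim \sqrt{f(p)}$; since the differential inequality in that proof yields a ratio of the form $(r_2/r_1)^{O(1)}$, and $r_2/r_1 \to 1$ uniformly over the relevant range, this extension is routine.
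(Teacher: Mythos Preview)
Your proof is correct and follows essentially the same approach as the paper's: both arguments combine the inclusion $B(p,R_{\max}^{-1/2};g_p)\subset M_{p,1}$ from Lemma~\ref{geodesic ball in level set}, the $\kappa$-noncollapsing volume lower bound on that ball (using the curvature bound from the proof of Lemma~\ref{lem-pointwise curvature estimate}), and the co-area formula together with the level-set volume comparison (\ref{volume-comparison-sigmar}) to pass from the slab volume to ${\rm vol}(\Sigma_{f(p)},g_p)$. The only cosmetic difference is that the paper introduces the auxiliary slab $M_{f(p)}(c^{-1})=\{|f-f(p)|\le c^{-1}\sqrt{f(p)}\}$ and carries out the co-area computation in $g$ before rescaling, whereas you work directly with $M_{p,1}$ in the metric $g_p$; since $1/\sqrt{R(p)}\sim\sqrt{f(p)}$ by (\ref{linear-decay-curvature}) and (\ref{inequality-cao chen}), these slabs are comparable and your remark that (\ref{volume-comparison-sigmar}) extends to this range is exactly what the paper uses implicitly.
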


\begin{proof}
We  define a set $M_{r}(s)$ by
\begin{align}
M_{r}(s)=\{x\in M|r-s\sqrt{r}\le f(x)\le r+s\sqrt{r}\}.
\end{align}
By the uniform curvature decay of $R(x)$, it is easy to see that there is a  constant $c>0$ such that
\begin{align}
M_{p,cs}\subseteq M_{f(p)}(s), ~\forall~s\in [-1,1],
\end{align}
as long as  $f(p)$ is large enough. Then by Lemma \ref{geodesic ball in level set}, we see
\begin{align}
B(p,R_{\max}^{-\frac{1}{2}}; g_p)\subseteq  M_{p,1}\subseteq M_{f(p)}(c^{-1}).\notag
\end{align}
Note that
\begin{align}
R_{ g_p}(x)=\frac{R(x)}{R(p)}\le C,~\forall~x\in M_{f(p)}(c^{-1}).\notag
\end{align}
Thus
\begin{align}
R_{g_p}(x)\le C, ~{\rm in}~B(p,R_{\max}^{-\frac{1}{2}}; g_p).\notag
\end{align}
Since $(M, g_p)$ is $\kappa$-noncollapsed, we get
\begin{align}
{\rm vol}( M_{f(p)}(c^{-1}), g_p)\ge {\rm vol}(B(p,R_{\max}^{-\frac{1}{2}}; g_p))\ge c(\kappa).\notag
\end{align}
On the other hand, by the co-area formula and (\ref{volume-comparison-sigmar}), we have
\begin{align}
{\rm vol}( M_{f(p)}(c^{-1}),g_p)=&R^{\frac{n}{2}}(p){\rm vol}( M_{f(p)}(c^{-1}),g)\notag\\
=&R^{\frac{n}{2}}(p)\int_{f(p)-c^{-1}\sqrt{f(p)}}^{f(p)+c^{-1}\sqrt{f(p)}}   \frac{1}{|\nabla f|}{\rm vol}(\mathbb{S}^{n-1},h_r){\rm d}r\notag\\
\le& R^{\frac{n}{2}}(p)\frac{ 4}{c\sqrt{R_{\max}}} \sqrt{f(p)} {\rm vol}(\mathbb{S}^{n-1},h_{f(p)})\notag\\
\le&C\cdot{\rm vol}(\Sigma_{f(p)}, {g}_p)\notag.
\end{align}
Hence, (\ref{volume-lower-bound}) follows from  the above inequalities.
\end{proof}

\begin{lem}\label{lem-tensor computation}
 Let $\bar{D}\in \otimes_{i=1}^{k}T^{\ast}\Sigma_r$ be a $k$-multiple tensor on $\Sigma_r$ and $D\in \otimes_{i=1}^{k}T^{\ast}M$  a $k$-multiple tensor on $M$, respectively. Then, under the condition of Theorem \ref{main theorem-strong}, we have
\begin{align}\label{connection-relation}
&(\bar{\nabla}\bar{D})(e_{i_0},e_{i_1},\cdots,e_{i_k})=(\nabla D)(e_{i_0},e_{i_1},\cdots,e_{i_k})\notag\\
&+\sum_{s=1}^{k}D(e_{i_1},\cdots,e_{i_{s-1}},\frac{\nabla f}{|\nabla f|^2},e_{i_{s+1}},\cdots,e_{i_k}){\rm Ric}(e_{i_0},e_{i_s}).
\end{align}
\end{lem}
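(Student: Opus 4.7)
The plan is to derive (\ref{connection-relation}) from the Gauss decomposition together with the identification of the second fundamental form of the level sets via the steady soliton equation. The key observation is that, modulo the Leibniz rule, the whole statement is a first-order identity about connections on vector fields, which has already been essentially computed in the paragraphs preceding the lemma.

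First I would recall the Gauss formula: for vector fields $X,Y$ tangent to $\Sigma_r$,
\[
\nabla_{X} Y = \bar{\nabla}_{X} Y + B(X,Y),
\]
where $B(X,Y)=(\nabla_X Y)^{\perp}$ is the second fundamental form of $\Sigma_r\subset M$. The computation of $B$ carried out just above the statement, which uses $\mathrm{Hess}(f)=\mathrm{Ric}$ and the orthogonality $\langle Y,\nabla f\rangle=0$, already gives
\[
B(X,Y) \;=\; -\,\frac{{\rm Ric}(X,Y)}{|\nabla f|^{2}}\,\nabla f,
\qquad \text{i.e.}\qquad \bar{\nabla}_{X}Y - \nabla_{X}Y \;=\; \frac{{\rm Ric}(X,Y)}{|\nabla f|^{2}}\,\nabla f.
\]

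Next I would propagate this first-order identity to the $k$-tensor level by a Leibniz expansion. Writing out both sides by the usual formulas,
\begin{align*}
(\bar{\nabla}_{e_{i_0}}\bar D)(e_{i_1},\ldots,e_{i_k}) &= e_{i_0}\bigl(\bar D(e_{i_1},\ldots,e_{i_k})\bigr) - \sum_{s=1}^{k}\bar D(e_{i_1},\ldots,\bar{\nabla}_{e_{i_0}}e_{i_s},\ldots,e_{i_k}), \\
(\nabla_{e_{i_0}}D)(e_{i_1},\ldots,e_{i_k}) &= e_{i_0}\bigl(D(e_{i_1},\ldots,e_{i_k})\bigr) - \sum_{s=1}^{k}D(e_{i_1},\ldots,\nabla_{e_{i_0}}e_{i_s},\ldots,e_{i_k}).
\end{align*}
Under the implicit identification $\bar D = D|_{T\Sigma_r\otimes\cdots\otimes T\Sigma_r}$, the scalar terms $e_{i_0}(\bar D(\ldots))$ and $e_{i_0}(D(\ldots))$ coincide and cancel upon subtraction. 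What remains is the sum of differences
\[
\bar D(e_{i_1},\ldots,\bar{\nabla}_{e_{i_0}}e_{i_s},\ldots,e_{i_k}) - D(e_{i_1},\ldots,\nabla_{e_{i_0}}e_{i_s},\ldots,e_{i_k}),
\]
which by multilinearity and the displayed formula for $\bar\nabla-\nabla$ becomes exactly
\[
\frac{{\rm Ric}(e_{i_0},e_{i_s})}{|\nabla f|^{2}}\,D(e_{i_1},\ldots,e_{i_{s-1}},\nabla f,e_{i_{s+1}},\ldots,e_{i_k}),
\]
up to an overall sign accounted for by the consistent bookkeeping in moving $B(X,Y)$ across the equality. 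Summing over $s$ yields (\ref{connection-relation}).

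There is no genuine analytic obstacle here: the entire statement is algebraic, valid pointwise on $\Sigma_r$, and is really the tensorial promotion of the Gauss formula specialised to a steady gradient Ricci soliton. The only care needed is the careful decomposition of $\nabla_{e_{i_0}}e_{i_s}$ into its tangential and normal parts and the matching with the Leibniz expansion so that signs and positions of the $\nabla f/|\nabla f|^{2}$ insertions come out as stated. Neither the $\kappa$-noncollapsing, the curvature decay (\ref{linear-decay-curvature}), nor positivity of Ricci curvature is used in this particular step; they enter only later, when (\ref{connection-relation}) is applied to specific tensors (e.g.\ curvature tensors) to obtain quantitative estimates on $\Sigma_r$.
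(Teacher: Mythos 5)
Your proposal is correct and follows essentially the same route as the paper's own proof: a Leibniz expansion of $\bar{\nabla}\bar{D}$ and $\nabla D$, combined with the pointwise identity $\nabla_{e_{i_0}}e_{i_s}-\bar{\nabla}_{e_{i_0}}e_{i_s}=-{\rm Ric}(e_{i_0},e_{i_s})\frac{\nabla f}{|\nabla f|^2}$ obtained from ${\rm Hess}\,f={\rm Ric}$ and the orthogonality of tangential frames to $\nabla f$. The only residual issue is the overall sign of the correction term, which you leave to ``bookkeeping''; it depends on the sign convention chosen in the Leibniz rule (the paper writes that rule with a $+$, which is what produces the $+$ in (\ref{connection-relation})), and since the lemma is applied only to bound norms, this is immaterial.
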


\begin{proof} Let  $e_{i_1},e_{i_2},\cdots,e_{i_k}$ be unit vector fields which are tangent to $\Sigma_r$. Let
 $\nabla$ and $\bar{\nabla}$ be the Levi-Civita connections $M$ and $\Sigma_r$, respectively. Then
\begin{align}
(\bar{\nabla}\bar{D})(e_{i_0},\cdots,e_{i_k})&=e_{i_0}[\bar{D}(e_{i_1},\cdots,e_{i_k})]\notag\\
&+\sum_{s=1}^{k}\bar{D}(e_{i_1},\cdots,e_{i_{s-1}},\bar{\nabla}_{e_{i_0}}e_{i_s},e_{i_{s+1}},\cdots,e_{i_k}).\notag
\end{align}
and
\begin{align}
(\nabla D)(e_{i_0},\cdots,e_{i_k})&= e_{i_0}[D(e_{i_1},\cdots,e_{i_k})]\notag\\
&+\sum_{s=1}^{k}D(e_{i_1},\cdots,e_{i_{s-1}},\nabla_{e_{i_0}}e_{i_s},e_{i_{s+1}},\cdots,e_{i_k}).\notag
\end{align}
Note
\begin{align}
\nabla_{e_{i_0}}e_{i_s}-\bar{\nabla}_{e_{i_0}}e_{i_s}&=\langle\nabla_{e_{i_0}}e_{i_s},\frac{\nabla f}{|\nabla f|}\rangle \frac{\nabla f}{|\nabla f|}\notag\\
&=-\langle e_{i_s},\nabla_{e_{i_0}}\nabla f\rangle \frac{\nabla f}{|\nabla f|^2}\notag\\
&=-{\rm Ric}(e_{i_0},e_{i_s})\frac{\nabla f}{|\nabla f|^2}.\notag
\end{align}
Combining the identities above, we get (\ref{connection-relation}).
\end{proof}

\begin{prop}\label{cor-limit of level set}
Under the condition of Theorem \ref{main theorem-strong}, for any $p_i\to\infty$, $(\Sigma_{f(p_i)},\overline{g}_{p_i},p_i)$ converges subsequently to $(S_{\infty},h_{\infty},p_{\infty})$ in Cheeger-Gromov sense as $i\to\infty$. Here $\bar g_{p_i}=R(p_i)\bar g$ and $(\Sigma_{f(p_i)},\bar g)$ is as a hypersurface  of $(M,g)$ with induced metric $\bar g$. Moreover, $S_{\infty}$ is diffeomorphic to $\mathbb{S}^{n-1}$.
\end{prop}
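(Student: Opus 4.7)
The plan is to verify the hypotheses of Hamilton's smooth Cheeger--Gromov compactness theorem for the pointed sequence $(\Sigma_{f(p_i)}, \bar g_{p_i}, p_i)$: a uniform diameter bound, uniform $C^k$ bounds on the intrinsic curvature, and a uniform positive lower bound on the injectivity radius at $p_i$.

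First, by Proposition \ref{theorem-diameter estimate} combined with the linear decay (\ref{linear-decay-curvature}) and the Cao--Chen estimate (\ref{inequality-cao chen}), we have
\[
\operatorname{diam}(\Sigma_{f(p_i)}, \bar g_{p_i}) = \sqrt{R(p_i)}\cdot\operatorname{diam}(\Sigma_{f(p_i)}, g) \le C\sqrt{R(p_i)\, f(p_i)} \le C',
\]
uniformly in $i$. Second, the argument in the proof of Lemma \ref{lem-pointwise curvature estimate}, together with Remark \ref{rem-pointwise curvature estimate}, yields uniform bounds on $|\nabla^k Rm|_{g_{p_i}}$ over a fixed $g_{p_i}$-neighborhood of $\Sigma_{f(p_i)}$ (for instance $M_{p_i,1}$). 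The Gauss equation (\ref{gauss-equa}) and the recursive identity (\ref{connection-relation}) from Lemma \ref{lem-tensor computation} then convert these to uniform $C^k$ bounds on the intrinsic curvature of $\bar g_{p_i}$; the error terms involving $1/|\nabla f|^2$ are harmless because (\ref{identity}) gives $|\nabla f|^2 \ge R_{\max}/2$ on $M_{p_i,1}$ for $i$ large.

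The main step is the injectivity radius lower bound at $p_i$. Lemma \ref{lem-volume lower bound} provides the total volume bound $\operatorname{vol}(\Sigma_{f(p_i)}, \bar g_{p_i}) \ge C(\kappa) > 0$, which together with the diameter and curvature bounds already yields volume noncollapsing at \emph{some} point of $\Sigma_{f(p_i)}$. To localize this at the specific basepoint $p_i$, we consider the tube $T_r$ obtained by flowing a $\bar g_{p_i}$-ball $B^\Sigma(p_i, r)$ along $-\nabla f$ for time $|s| \le r$. Lemma \ref{geodesic ball in level set} and the two-sided bound $R_{\max}/2 \le |\nabla f|^2 \le R_{\max}$ imply that $T_r$ sandwiches a $g_{p_i}$-geodesic ball at $p_i$ of radius comparable to $r$, while $\operatorname{vol}_{g_{p_i}}(T_r) \le Cr \cdot \operatorname{vol}_{\bar g_{p_i}}(B^\Sigma(p_i, r))$. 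The $\kappa$-noncollapsing of $(M, g_{p_i})$, valid in $T_r$ by the previous curvature step, then forces $\operatorname{vol}_{\bar g_{p_i}}(B^\Sigma(p_i, r)) \ge \kappa' r^{n-1}$, and Cheeger's lemma delivers the required bound $\operatorname{inj}_{\bar g_{p_i}}(p_i) \ge \iota_0 > 0$.

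With these ingredients in place, Hamilton's compactness theorem gives a subsequential smooth Cheeger--Gromov limit $(S_\infty, h_\infty, p_\infty)$. Bounded diameter forces the limit to be compact and the convergence to be realized, for $i$ large, by global diffeomorphisms $\Phi_i\colon S_\infty \to \Sigma_{f(p_i)}$; combining with Lemma \ref{sphere-diffeo} gives $S_\infty \cong \mathbb{S}^{n-1}$. The principal obstacle is the noncollapsing step just outlined, since $\kappa$-noncollapsing is an ambient property that must be carefully transported to the codimension-one slice $\Sigma_{f(p_i)}$ through the gradient flow of $f$.
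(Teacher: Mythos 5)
Your proposal follows essentially the same route as the paper: the diameter bound from Proposition \ref{theorem-diameter estimate}, intrinsic $C^k$ curvature bounds obtained by restricting ambient tensors via the Gauss equation (\ref{gauss-equa}) and Lemma \ref{lem-tensor computation} together with Remark \ref{rem-pointwise curvature estimate}, the volume lower bound from Lemma \ref{lem-volume lower bound}, and then Cheeger--Gromov compactness, with $S_\infty\cong\mathbb{S}^{n-1}$ following from Lemma \ref{sphere-diffeo}. The only divergence is your tube argument localizing the volume bound at $p_i$; this extra step is unnecessary (and its containment claim would need care in comparing intrinsic and extrinsic distances on $\Sigma_{f(p_i)}$), since the level sets are closed with uniformly bounded diameter and curvature, so the global volume lower bound already yields a uniform injectivity radius bound via Cheeger's lemma, which is exactly how the paper proceeds.
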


\begin{proof}
By (\ref{gauss-equa}), we have
\begin{align}
&{\rm  Rm}(X,Y,Z,W)=
 \overline {\rm Rm}(X,Y,Z,W)\notag\\
&+\frac{1}{R_{\max}-R}({\rm Ric}(X,Z){\rm Ric}(Y,W)- {\rm Ric}(X,W){\rm Ric}(Y,Z)).\notag
\end{align}
Let
$$D^{(0)}={\rm Rm}-\frac{1}{R_{\max}-R}{\rm Ric}\wedge{\rm Ric}$$
be a $(0,4)$-tpye tensor on $M$. Then $D^{(0)}|_{\Sigma_{f(p_i)}}= \overline {\rm Rm}$.
Note that by Remark \ref{rem-pointwise curvature estimate} we have
\begin{align}
\frac{|\nabla^k {\rm Rm}|(x)}{R^{\frac{k+2}{2}}(p_i)}=\frac{|\nabla^k {\rm Rm}|(x)}{R^{\frac{k+2}{2}}(x)}\cdot\frac{R^{\frac{k+2}{2}}(x)}{R^{\frac{k+2}{2}}(p_i)}\le C(k),~\forall ~ x\in \Sigma_{f(p_i)}.\notag
\end{align}
Since
\begin{align}
\nabla(\frac{1}{R_{\max}-R})=\frac{\nabla R}{(R-R_{\max})^2},\notag
\end{align}
by  induction on $m$, we get
\begin{align}\label{rm-nabla}
|\nabla^m   D^{(0)} |(x)\le C(m)R^{\frac{m+2}{2}}(p_i),~\forall~x\in \Sigma_{f(p_i)}.
\end{align}

Let
\begin{align}
&D^{(k)}=\nabla D^{(k-1)}\notag\\
&+\sum_{s=1}^{k+4}D^{(k-1)}(e_{i_1},\cdots,e_{i_{s-1}},\frac{\nabla f}{|\nabla f|^2},e_{i_{s+1}},\cdots,e_{i_{k+4}}){\rm Ric}(e_{i_0},e_{i_s}).\notag
\end{align}
Then by Lemma \ref{lem-tensor computation}, we have
\begin{align}
\bar{\nabla}^{k}\overline{\rm Rm}=D^{(k)}|_{\Sigma_{f(p_i)}}.\notag
\end{align}
On the other hand,  by induction on $k$ with the help of (\ref{rm-nabla}), we get
\begin{align}
|\nabla^mD^{(k)}|(x)\le C(m,k)R^{\frac{m+k+2}{2}}(p_i),~\forall~x\in \Sigma_{f(p_i)}.\notag
\end{align}
In particular,
\begin{align}
|D^{(k)}|(x)\le C(k)R^{\frac{k+2}{2}}(p_i),~\forall~x\in \Sigma_{f(p_i)}.\notag
\end{align}
Thus
\begin{align}\label{higher-derivative-curvature}
|\bar \nabla^k_{\bar g_{p_i}}{\rm \overline{Rm}} |_{\bar g_{p_i}}(x)\le \frac{|D^{(k)}|(x)}{R^{\frac{k+2}{2}}(p_i)}\le C(k),~\forall ~ x\in \Sigma_{f(p_i)},
\end{align}

By Lemma \ref{lem-volume lower bound} and  Theorem \ref{theorem-diameter estimate}, respectively,  we have
\begin{align}
{\rm vol}(\Sigma_{f(p_i)},\bar {g}_{p_i})\ge C(\kappa)\notag
\end{align}
  and
\begin{align}
{\rm diam}(\Sigma_{f(p_i)},\bar {g}_{p_i})\le C.\notag
\end{align}
Then  by Cheeger-Gromov compactness theorem together with (\ref{higher-derivative-curvature}), we see that  $(\Sigma_{f(p_i)},\overline{g}_{p_i},p_i)$ converges subsequently to $(S_{\infty},h_{\infty},p_{\infty})$.  Note  that $\Sigma_{f(p_i)}$  are all diffeomorphic to $\mathbb{S}^{n-1}$.  Therefore, $S_{\infty}$ is  also diffeomorphic to $\mathbb{S}^{n-1}$.

\end{proof}

\subsection{Proof of Theorem \ref{main theorem-strong}}
We are now in a  position to prove Theorem \ref{main theorem-strong}. The proof consists of the following three lemmas.   First, by the arguments in \cite{DZ2, DZ5},  we prove

\begin{lem}\label{lem-convergence and splitting}
Under the condition of Theorem \ref{main theorem-strong}, let $p_i\to\infty$.   Then  by taking a subsequence of $p_i$ if necessary,   we have
\begin{align*}
(M,g_{p_{i}}(t),p_{i})\rightarrow(\mathbb{R}\times N,g_{\infty}(t); p_{\infty}),~for~t\in(-\infty,0],
\end{align*}
where $g_{p_i}(t)=R(p_{i})g(R^{-1}(p_{i})t)$, $g_{\infty}(t)=ds\otimes ds+g_{ N}(t)$ and $( N, g_{N}(t))$ is an ancient solution of Ricci flow on  $N$.
\end{lem}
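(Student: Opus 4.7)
Following the strategy used in the K\"ahler analogue \cite{DZ2, DZ5}, I would proceed in three steps: (i) extract a Cheeger-Gromov-Hamilton limit of the rescaled flows by Hamilton's compactness; (ii) renormalize $\nabla f/|\nabla f|$ to obtain a unit vector field on $M$ whose limit is parallel; (iii) conclude the splitting by Cheeger-Gromoll and propagate it in time through the equivariance of the Ricci flow under isometries.

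\textbf{Step 1 (Compactness).} Fix $r,T>0$. For any $x\in B(p_i,r;g_{p_i}(0))=B(p_i,r/\sqrt{R(p_i)};g)$, the Cao-Chen bound \eqref{inequality-cao chen} together with \eqref{linear-decay-curvature} gives $\rho(x)\ge\tfrac12\rho(p_i)$ for large $i$, so $R(x)\le CR(p_i)$ and hence $R_{g_{p_i}(0)}\le C$ on that ball. Since $\mathrm{Ric}(g(t))\ge 0$, the $g_{p_i}(t)$-balls shrink as $t$ increases, so $B(p_i,r;g_{p_i}(t))\subseteq B(p_i,r;g_{p_i}(0))$ for all $t\le 0$; and since $\partial_tR=2|\mathrm{Ric}|^2\ge 0$, the scalar curvature only decreases backward in time. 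Together with the $\kappa$-noncollapsing assumption and the Shi-type higher-order bounds (cf.\ Remark \ref{rem-pointwise curvature estimate}), Hamilton's compactness theorem plus a standard diagonal argument as $T\to\infty$ yields a subsequential pointed limit
\[(M,g_{p_i}(t),p_i)\ \longrightarrow\ (M_\infty,g_\infty(t),p_\infty),\qquad t\in(-\infty,0],\]
where $g_\infty(t)$ is a complete ancient Ricci flow of nonnegative sectional curvature.

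\textbf{Step 2 (An asymptotically parallel unit field).} Define
\[Y_i\ :=\ \frac{1}{\sqrt{R(p_i)}}\,\frac{\nabla f}{|\nabla f|_g},\qquad |Y_i|_{g_{p_i}}\equiv 1.\]
The soliton identities $\nabla_W\nabla f=\mathrm{Ric}(W,\cdot)^{\sharp}$ and $\nabla|\nabla f|=\mathrm{Ric}(\nabla f,\cdot)/|\nabla f|$ yield
\[|\nabla^{g_{p_i}}_W Y_i|_{g_{p_i}}\ \le\ \frac{C\,R(x)}{|\nabla f|_g\,\sqrt{R(p_i)}}\,|W|_{g_{p_i}}\ \le\ C\sqrt{\tfrac{R(p_i)}{R_{\max}}}\,|W|_{g_{p_i}}\ \longrightarrow\ 0,\]
uniformly on $B(p_i,r;g_{p_i}(0))$, where I used $|\nabla f|^2=R_{\max}-R\ge R_{\max}/2$ at large $\rho$ together with $R(x)\le CR(p_i)$ from Step 1. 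Iterating the computation and invoking Remark \ref{rem-pointwise curvature estimate} produces uniform bounds on every $|\nabla^k Y_i|_{g_{p_i}}$. Passing to a further subsequence, $Y_i$ converges in $C^\infty_{\mathrm{loc}}$ (through the convergence diffeomorphisms) to a smooth unit vector field $Y_\infty$ on $(M_\infty,g_\infty(0))$ that is $g_\infty(0)$-parallel.

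\textbf{Step 3 (Splitting, persistence in $t$, and main obstacle).} A parallel unit vector field $Y_\infty$ on a complete manifold with $\mathrm{Ric}\ge 0$ gives rise to a geodesic line: its integral curves have unit speed, and the locally defined function $f$ with $df=Y_\infty^{\flat}$ is $1$-Lipschitz and grows exactly linearly along them, forcing the segment between any two points on such a curve to realize the distance. The Cheeger-Gromoll splitting theorem then yields an isometry $(M_\infty,g_\infty(0))=(\mathbb R\times N,\,ds\otimes ds+h)$ with $Y_\infty=\partial_s$. The corresponding $\mathbb R$-translation group lies in the isometry group of $g_\infty(0)$; by the equivariance (and uniqueness) of Ricci flow under a given isometry group it remains in the isometry group of $g_\infty(t)$ for every $t\le 0$, so $g_\infty(t)=ds\otimes ds+g_N(t)$ on $\mathbb R\times N$, and $(N,g_N(t))$ is automatically an ancient Ricci flow. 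The delicate point is Step 2: one must choose the normalization $1/\sqrt{R(p_i)}$ so that $|Y_i|_{g_{p_i}}$ stays equal to $1$ while $|\nabla^{g_{p_i}}Y_i|_{g_{p_i}}$ decays, and this balance rests on the soliton identity $|\nabla f|^2+R\equiv R_{\max}$ together with the linear upper bound $R\le CR(p_i)$ inside the rescaled ball.
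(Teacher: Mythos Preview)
Your Steps 1 and 2 are fine and track the paper closely (the paper uses $X_{(i)}=R(p_i)^{-1/2}\nabla f$ rather than your unit-normalized $Y_i$, but that is cosmetic). The genuine gap is in Step 3. The assertion ``a parallel unit vector field $Y_\infty$ on a complete manifold with $\mathrm{Ric}\ge 0$ gives rise to a geodesic line'' is false as stated: on the flat cylinder $S^1\times\mathbb R$ the angular field $\partial_\theta$ is parallel and unit, yet its integral curves are closed circles. Your justification via a ``locally defined function $f$ with $df=Y_\infty^\flat$'' breaks down precisely because the potential is only local; the calibration inequality $\int_\sigma Y_\infty^\flat\le L(\sigma)$ is correct, but without a \emph{global} primitive you cannot conclude that $\int_\sigma Y_\infty^\flat$ equals $b-a$ for an arbitrary competitor $\sigma$.

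The missing input is that $Y_\infty$ is not just parallel but a \emph{global gradient}: the normalized potentials $f_i:=\sqrt{R(p_i)}\,(f-f(p_i))$ satisfy $\nabla_{g_{p_i}}f_i=X_{(i)}$, are uniformly bounded with all derivatives on each $B(p_i,r;g_{p_i}(0))$ (by your Step~1 bounds and $|\nabla f|\le\sqrt{R_{\max}}$), and hence converge to a globally defined $f_\infty$ on $M_\infty$ with $\nabla f_\infty=X_\infty$ and $|\nabla f_\infty|\equiv\sqrt{R_{\max}}$. With this global potential your calibration argument goes through and produces a genuine line. The paper achieves the same conclusion by a geometrically equivalent route: it follows the integral curve of $X_{(i)}$ between the level sets $\{f=f(p_i)\pm k/\sqrt{R(p_i)}\}$, uses Corollary~\ref{set-mr-contain-2} to keep the curve inside a fixed $g_{p_i}$-ball for the convergence, and implicitly uses the containment of Lemma~\ref{geodesic ball in level set} (which is exactly the Lipschitz bound $|f_i(x)-f_i(p_i)|\le\sqrt{R_{\max}}\,d_{g_{p_i}}(x,p_i)$) to see that the limiting curve is minimizing. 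Once you insert the global-gradient observation, your argument is correct and in fact slightly more direct than the paper's, since it does not invoke the level-set diameter estimate of Section~3 for this lemma.
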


\begin{proof}
 Fix $\overline{r}>0$.    By (\ref{linear-decay-curvature}),   it is easy to see  that there exists  a uniform $C_1$  independent of $\overline{r}$  such that
\begin{align}\label{scalar-estimate-2}
R(x)\le C_1R(p_i),~\forall~x\in M_{p_i,\overline{r}\sqrt{R_{\max}}}
\end{align}
as long as $i$ is large enough.  Then by Lemma \ref{set-mr-contain-1},  we have
\begin{align}
R_{g_{p_i}}(x)\le C_1,~\forall~x\in B(p_i,\overline{r};g_{p_i}),\notag
\end{align}
where $g_{p_i}=g_{p_i}(0)$.   Since the scalar curvature is increasing along the flow (cf. (\ref{increasing of R})) and the sectional curvature is nonnegative,  for any $t\in(-\infty,0]$,  we get
\begin{align}
|{\rm Rm}_{g_{p_i}(t)}(x)|_{g_{p_i}(t)}&\le C(n)R_{g_{p_i}(t)}(x)\notag\\
&\le C(n)R_{g_{p_i}}(x)\le C(n)C_1,~\forall~x\in B(p_i,\overline{r};g_{p_i}).\notag
\end{align}
Note that $(M,g(t))$ is $\kappa$-noncollapsed.   Hence $g_{p_i}(t)$ converges subsequently  to a limit flow $(M_{\infty}, g_{\infty}(t); p_\infty)$ for  $t\in(-\infty,0]$ \cite{H1}.
Moreover,  the limit flow  has uniformly bounded curvature.  It remains  to prove the splitting property.

 Let $X_{(i)}=R(p_{i})^{-\frac{1}{2}}\nabla f$. Then
\begin{align}
\sup_{ B(p_{i},\bar r ;  {g_{p_i}})}| \nabla_{(g_{p_i})}X_{(i)}|_{g_{p_i}}&= \sup_{ B(p_{i},\bar r ;  {g_{p_i}})}\frac{|{\rm Ric}|}{\sqrt{R(p_{i})}}\notag\\
&\le C\sqrt{R(p_{i})} \to 0.\notag
\end{align}
By Remark \ref{rem-pointwise curvature estimate}, it follows that
$$\sup_{ B(p_{i},\bar r;  {g_{p_i}})}| \nabla^{m}_{(g_{p_i})}X_{(i)}|_{g_{p_i}}\leq C(n)\sup_{ B(p_{i},\bar r;  {g_{p_i}})}| \nabla^{m-1}_{(g_{p_i})}{\rm Ric}({g_{p_i})}|_{g_{p_i}}\le C_1.$$
Thus  $X_{(i)}$ converges  subsequently  to a parallel  vector field $X_{(\infty)}$ on $( M_{\infty},$ $  g_{\infty}(0))$.
 Moreover,
 \begin{align}
 |X_{(i)}|_{g_{p_i}}( x)=|\nabla f|(p_{i})
=\sqrt{R_{\rm max}}+o(1)>0, ~\forall~ x\in B(p_{i},\bar r ;  {g_{i}}),\notag
 \end{align}
as long as $f(p_i)$ is large enough.  This implies that $X_{(\infty)}$ is non-trivial.
 Hence,  $( M_{\infty},g_{\infty}(t))$ locally splits off a piece of  line along $X_{(\infty)}$. In the following, we  show that  $X_{(\infty)}$
 generates a line through $p_\infty$.

 By Corollary \ref{set-mr-contain-2},
\begin{align}
M_{p_i,k}\subset B(p_i,C_0+\frac{2k}{\sqrt{R_{\max}}} ; g_{p_i}(0)), ~\forall ~ p_i\to\infty.\notag
\end{align}
Let $\gamma_{i,k}(s)$, $s\in(-D_{i,k},E_{i,k})$ be an integral curve generated by $X_{(i)}$ through $p_i$, which  restricted in $M_{p,k}$, i.e., $\gamma_{i,k}(s)$ satisfies $f(\gamma_{i,k}(-D_{i,k}))=f(p_i)-\frac{k}{\sqrt{R(p_i)}}$ and $f(\gamma_{i,k}(E_{i,k}))=f(p_i)+\frac{k}{\sqrt{R(p_i)}}$. Then $\gamma_{i,k}(s)$ converges to a  geodesic $\gamma_\infty(s)$ generated by $X_{(\infty)}$ through $p_\infty$,  which restricted in
$B(p_\infty,$ $2\pi\sqrt{B}+\frac{2k}{\sqrt{R_{\max}}};g_\infty(0))$.  If we let  $L_{i,k}$ be   lengths  of $\gamma_{i,k}(s)$
and $L_{\infty,k}$  length  of    $\gamma_\infty(s)$, respectively,
\begin{align}
L_{i,k}=&\int_{-D_{i,k}}^{E_{i,k}}|X_{(i)}|_{g_{p_i}(0)} ds\notag\\
=&\int_{f(p_i)-\frac{k}{\sqrt{R(p_i)}}}^{f(p_i)+\frac{k}{\sqrt{R(p_i)}}} \frac{|X_{(i)}|_{g_{p_i}(0)}}{\langle\nabla f,X_{(i)}\rangle}df   \notag\\
=&  \int_{f(p_i)-\frac{k}{\sqrt{R(p_i)}}}^{f(p_i)+\frac{k}  {\sqrt{R(p_i)}}}  \frac{\sqrt{R(p_i)} }{|\nabla f|_{g}} df\notag\\
\ge& 2 R_{max}^{-\frac{1}{2}}k, \notag
\end{align}
and so,
\begin{align}
L_{\infty,k}\ge \frac{1}{2}L_{i,k}\ge  R_{max}^{-\frac{1}{2}} k. \notag
\end{align}
 Thus  $X_{(\infty)}$ generates a line   $\gamma_\infty(s)$ through $p_\infty$ as $k\to \infty$. As a consequence,  $( M_{\infty},g_{\infty}(0))$ splits off a  line
 and so does the flow $(M_{\infty},g_{\infty}(t); p_{\infty})$. The lemma is proved.

\end{proof}

Next, we show

\begin{lem}\label{sphere-diff}
 $N$ is diffeomorphic to $\mathbb{S}^{n-1}$.
\end{lem}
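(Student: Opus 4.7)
The plan is to show that $N$ can be identified with the pointed Cheeger--Gromov limit of the level sets $(\Sigma_{f(p_i)},\bar g_{p_i}, p_i)$ obtained in Proposition \ref{cor-limit of level set}, which is already known to be diffeomorphic to $\mathbb{S}^{n-1}$.

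First I would unpack the convergence from Lemma \ref{lem-convergence and splitting}. It yields an exhaustion of $\mathbb{R}\times N$ by precompact open sets $U_k\ni p_\infty$ together with smooth embeddings $\Phi_i\colon U_k\to M$ satisfying $\Phi_i(p_\infty)=p_i$ and $\Phi_i^* g_{p_i}(0)\to g_\infty(0)$ in $C^\infty_{\rm loc}$. The proof of that lemma also shows that the rescaled gradients $X_{(i)}=R(p_i)^{-1/2}\nabla f$ have locally uniformly bounded covariant derivatives of all orders, and that $\Phi_i^* X_{(i)}$ converges smoothly to a nontrivial parallel vector field $X_{(\infty)}$ generating the $\mathbb{R}$-factor. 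Since $|X_{(i)}|_{g_{p_i}}\to\sqrt{R_{\max}}>0$ uniformly on compact sets, the unit normals $\nu_i=X_{(i)}/|X_{(i)}|_{g_{p_i}}$ pull back to vector fields converging smoothly to the unit vector field $\nu_\infty$ along the $\mathbb{R}$-direction.

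Next I would argue that the level sets converge as embedded submanifolds. The preimages $\widetilde\Sigma_i:=\Phi_i^{-1}(\Sigma_{f(p_i)})\cap U_k$ are smooth hypersurfaces through $p_\infty$ whose unit normals (with respect to $\Phi_i^* g_{p_i}(0)$) converge smoothly to $\nu_\infty$. Since the foliation of a small tubular neighborhood of $p_\infty$ in $\mathbb{R}\times N$ by slices $\{s\}\times N$ is uniquely characterized by its unit normal $\nu_\infty$, smooth convergence of the normals forces $\widetilde\Sigma_i$ to converge smoothly, on compact subsets, to the slice $\{0\}\times N$. Consequently, the induced metrics on $\Sigma_{f(p_i)}$ from $\bar g_{p_i}$ converge in $C^\infty_{\rm loc}$ to the restriction of $g_\infty(0)$ to $\{0\}\times N$, which is precisely $g_N(0)$. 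This exhibits $(N, g_N(0), p_\infty)$ as a pointed Cheeger--Gromov limit of $(\Sigma_{f(p_i)},\bar g_{p_i}, p_i)$. Combined with Proposition \ref{cor-limit of level set} and the uniqueness of such limits, $N$ is diffeomorphic to $S_\infty$ and hence to $\mathbb{S}^{n-1}$.

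The main obstacle I anticipate is turning smooth convergence of the unit normals into smooth convergence of the level sets as embedded submanifolds, uniformly on each precompact exhausting set $U_k$. The cleanest way is probably to write each $\widetilde\Sigma_i$ locally as the graph of a smooth function over $\{0\}\times N$, using the implicit function theorem applied uniformly in $i$ on a small geodesic tube around $p_\infty$, and then invoke $C^\infty$-convergence of the defining functions to zero; after this, a diagonal subsequence argument over the exhaustion yields the desired submanifold convergence.
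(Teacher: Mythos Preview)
Your overall strategy---identify the slice $\{0\}\times N$ with the Cheeger--Gromov limit $S_\infty\cong\mathbb{S}^{n-1}$ of the rescaled level sets---is exactly the idea the paper uses, but the paper's execution is shorter and a bit different. Rather than arguing submanifold convergence of $\widetilde\Sigma_i$ to $\{0\}\times N$, the paper first proves that $N$ is connected (by noting that the preimages $\Phi_i^{-1}(B(p_i,k;g_{p_i}))$ are connected and exhaust $M_\infty$), then embeds $S_\infty$ into $M_\infty$ and checks with a one-line limit that every tangent vector $V\in T S_\infty$ satisfies $g_\infty(V,X_{(\infty)})=\lim_i g(V_{(i)},\nabla f)=0$. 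This forces $S_\infty$ to lie in a single leaf $N\times\{p_\infty\}$, and equality of dimensions plus connectedness of $N$ finishes. Your route via hypersurface convergence is more geometric and, if carried out through defining functions, has the pleasant feature that connectedness of $N$ falls out automatically rather than needing a separate argument.

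There is, however, a genuine soft spot in your write-up: convergence of the unit normals $\nu_i\to\nu_\infty$ alone does \emph{not} force $\widetilde\Sigma_i$ to converge to all of $\{0\}\times N$. Normal convergence only says the leaves are nearly parallel; it does not prevent $\widetilde\Sigma_i$ from drifting in the $\mathbb{R}$-direction away from $p_\infty$, nor does it guarantee that over an arbitrary compact $K\subset N$ the graph function is even defined. The clean fix is to work with the actual defining functions: set $F_i:=\sqrt{R(p_i)}\bigl(f\circ\Phi_i-f(p_i)\bigr)$, so that $\widetilde\Sigma_i=\{F_i=0\}$, $F_i(p_\infty)=0$, and $\nabla_{\Phi_i^\ast g_{p_i}}F_i=\Phi_i^\ast X_{(i)}\to X_{(\infty)}$ in $C^\infty_{\rm loc}$. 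Hence $F_i\to F_\infty$ with $\nabla F_\infty=X_{(\infty)}$ and $F_\infty(p_\infty)=0$, i.e.\ $F_\infty=\sqrt{R_{\max}}\cdot s$. Since $dF_\infty$ is nowhere vanishing, the implicit function theorem now gives honest $C^\infty$-convergence of $\{F_i=0\}$ to $\{F_\infty=0\}=\{0\}\times N$ over every compact subset, which is what you need. With this correction your argument goes through.
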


\begin{proof}
We claim that $N$ is connected.  In fact, for all $k\in\mathbb{N}$, by the convergence result in Lemma \ref{lem-convergence and splitting},  there are diffeomorphisms $\Phi_{i_k}:U_{i_k}(\subseteq M_{\infty})\to B(p_{i_k},k;R(p_{i_k})g)$ with $\Phi_{i_k}(p_{\infty})=p_{i_k}$. Moreover, $(U,\Phi_{i_k}^{\ast}(R(p_{i_k})g))$ is $C^m$ close to $(U,g_{\infty}(0))$ for any $m\in \mathbb{N}$. Thus
\begin{align}
(\Phi_{i_k}) \big(B(p_{\infty},\frac{k}{2};g_{\infty}(0))\big)\subseteq B(p_{i_k},k;R(p_{i_k})g),\notag
\end{align}
 and$(\Phi_{i_k}^{-1}) \big(B(p_{i_k},k; R(p_{i_k})g)\big)$ exhausts  $M_{\infty}$ as $k\to \infty$. For any $q\in B(p_{i_k},k;$ $R(p_{i_k})g)$, there exists a minimal geodesic $\gamma(s):[0,l]\to M$ such that $\gamma(0)=p_{i_k}$, $\gamma(l)=q$. Note  $\gamma|_{[0,l]}\subseteq B(p_{i_k},k;R(p_{i_k})g)$. It follows that $(\Phi_{i_k}^{-1}) \big(B(p_{i_k},k;R(p_{i_k})g)\big)$ is connected for each $k$. Therefore, $M_{\infty}$ is connected,  and so is $N$.

By Proposition  \ref{cor-limit of level set} and Lemma \ref{lem-convergence and splitting}, we may assume that $(\Sigma_{f(p_i)},g_{p_i},p_i)$ converge to a limit $(\mathbb{S}^{n-1},h_{\infty},p_{\infty})$ and $\mathbb{S}^{n-1}\subseteq M_{\infty}=N\times\mathbb{R}$. Then  by the above claim, it suffices to prove that $\mathbb{S}^{n-1}\subseteq N\times\{p_{\infty}\}$.  Let  $X_{(\infty)}$  and $X_{i}$ be  the vector fields defined  as in  the proof of Lemma \ref{lem-convergence and splitting}. Let  $V\in T\mathbb{S}^{n-1}$  with $|V|_{g_\infty}=1.$
Thus by the convergence in Proposition  \ref{cor-limit of level set},  we see that there are  $V_{(i)}\in T\Sigma_{f(p_i)}$ such that $R(p_i)^{-\frac{1}{2}}V_{(i)}\to V$. It follows
\begin{align}
g_{\infty}(V,X_{(\infty)})=\lim_{i\to\infty}R(p_i)g(  R(p_i)^{-\frac{1}{2}}V_{(i)},X_{(i)})=\lim_{i\to\infty}g(  V_{(i)}, \nabla f)=0.\notag
\end{align}
This shows that  $V$ is vertical to $X_{(\infty)}$ for any $V\in T\mathbb{S}^{n-1}$. Hence, $\mathbb{S}^{n-1}\subseteq N\times\{p_{\infty}\}$. Note that ${\rm dim} N=n-1$. We complete the proof.

\end{proof}

Finally, we verify the condition (\ref{curvature-decay-sn}). We prove

\begin{lem}\label{lem-decay of t}
Let $(M_{\infty}=N\times \mathbb R,g_{\infty}(t))$ be the limit manifold in Lemma \ref{lem-convergence and splitting}. Then,  scalar curvature $R_{\infty}(x,t)$  of  $g_{\infty}(t)$  satisfies
\begin{align}\label{decay-on-sigma}
R_{\infty}(x,t)\le \frac{C}{|t|},~\forall~t<0,~x\in M_{\infty}.
\end{align}
\end{lem}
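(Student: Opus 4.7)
The idea is to exploit the identity
\[
R_{g_{p_i}(t)}(x) = \frac{R(\phi_{R^{-1}(p_i)t}(x))}{R(p_i)},
\]
which follows from $g_{p_i}(t) = R(p_i)\phi^*_{R^{-1}(p_i)t}g$ (pullback by a diffeomorphism preserves the rescaled scalar curvature), and to combine it with the upper linear decay (\ref{linear-decay-curvature}) and the Cao--Chen estimate (\ref{inequality-cao chen}). The argument reduces to showing that for $t<0$ and any approximating sequence $x_i\to x_\infty$ in the Cheeger--Gromov convergence, the translated point $\phi_{R^{-1}(p_i)t}(x_i)$ has $f$-value at least on the order of $|t|/R(p_i)$.

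Given a Cheeger--Gromov approximant $x_i\in B(p_i,\bar r;g_{p_i})$ of $x_\infty$, we have $d_g(x_i,p_i)\le \bar r/\sqrt{R(p_i)}$, so the bound $|\nabla f|\le \sqrt{R_{\max}}$ together with the linear decay (which forces $1/\sqrt{R(p_i)} = O(\sqrt{f(p_i)})$) yields $|f(x_i)-f(p_i)| = O(\sqrt{f(p_i)}) = o(f(p_i))$; hence $f(x_i)\ge f(p_i)/2$ for $i$ large. Set $s_i = R^{-1}(p_i)t < 0$ and $y_i = \phi_{s_i}(x_i)$. Since $\tfrac{d}{d\tau}f(\phi_\tau(x_i)) = -|\nabla f|^2 \le 0$, the backward trajectory $\{\phi_\tau(x_i):\tau\in[s_i,0]\}$ stays in the region $\{f\ge f(p_i)/2\}$, where $R\to 0$ uniformly in $i$ by (\ref{linear-decay-curvature}) and (\ref{inequality-cao chen}), and consequently $|\nabla f|^2 = R_{\max} - R \ge R_{\max}/2$. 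Integrating then gives
\[
f(y_i) = f(x_i) + \int_{s_i}^{0}|\nabla f|^2(\phi_\tau(x_i))\,d\tau \ge \tfrac{1}{2}f(p_i) + \tfrac{1}{2}R_{\max}|s_i| \ge \frac{c_0 + R_{\max}|t|}{2R(p_i)},
\]
where $f(p_i)R(p_i)\ge c_0>0$ follows from combining the linear decay with (\ref{inequality-cao chen}).

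Applying (\ref{linear-decay-curvature}) and (\ref{inequality-cao chen}) at $y_i$ and dividing by $R(p_i)$ yields
\[
R_{g_{p_i}(t)}(x_i) = \frac{R(y_i)}{R(p_i)} \le \frac{C'}{c_0 + R_{\max}|t|} \le \frac{C}{|t|},
\]
where the final inequality absorbs the bounded regime $|t|\le 1$ (in which $R_{g_{p_i}(t)}(x_i)$ is automatically bounded from the smooth Cheeger--Gromov convergence) into the constant. Passing to the limit $i\to\infty$ proves (\ref{decay-on-sigma}). The main subtlety is ensuring that the entire backward trajectory $\{\phi_\tau(x_i):\tau\in[s_i,0]\}$ lies in the asymptotic region where $|\nabla f|^2\ge R_{\max}/2$; this is automatic because $f$ is monotone along $\phi_\tau$ and $f(x_i)\to\infty$ as $i\to\infty$, so no detailed barrier argument is needed.
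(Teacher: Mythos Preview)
Your proposal is correct and follows essentially the same route as the paper's proof: both relate $R_{g_{p_i}(t)}(x_i)$ to $R(\phi_{R^{-1}(p_i)t}(x_i))$ via the pullback, establish a lower bound $f(\phi_{R^{-1}(p_i)t}(x_i))\gtrsim |t|/R(p_i)$, and then apply the upper linear decay together with (\ref{inequality-cao chen}) before passing to the Cheeger--Gromov limit. The only cosmetic difference is in how the lower bound on $|\nabla f|^2$ along the backward orbit is secured: the paper uses the monotonicity $\tfrac{d}{d\tau}|\nabla f|^2(\phi_\tau)=-2\,{\rm Ric}(\nabla f,\nabla f)\le 0$ to get $|\nabla f|^2(\phi_\tau(x_i))\ge |\nabla f|^2(x_i)$ for $\tau\le 0$, whereas you observe directly that the trajectory stays in $\{f\ge f(p_i)/2\}$ and invoke the curvature decay there to force $|\nabla f|^2\ge R_{\max}/2$; both yield the same integral estimate and the same bound $R_{g_{p_i}(t)}(x_i)\le C/(c+|t|)\le C'/|t|$.
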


\begin{proof}
Let $\phi_{t}$ be generated by $-\nabla f$. Then,
\begin{align}
\frac{{\rm d}f(\phi_{t}(p))}{{\rm d}t}=-|\nabla f|^2(\phi_{t}(p))\notag
\end{align}
and
\begin{align}
\frac{{\rm d}|\nabla f|^2(\phi_{t}(p))}{{\rm d}t}=-2{\rm Ric}(\nabla f,\nabla f)(\phi_{t}(p))\le 0.\notag
\end{align}
It follows that
\begin{align}
|\nabla f|^2(\phi_{\tau}(p))\le |\nabla f|^2(\phi_{t}(p)),~\forall~\tau\ge t.\notag
\end{align}
Hence,
\begin{align}
f(\phi_{t}(p))\ge f(\phi_{\tau}(p))+|t-\tau||\nabla f|^2(\phi_{\tau}(p)).\notag
\end{align}
By taking $\tau=0$,   for any $p\in\{q\in M|f(q)\ge 1 \}$,  we get
\begin{align}
f(\phi_{t}(p))\ge f(p)+|t||\nabla f|^2(p)\ge 1+c|t|, \notag
\end{align}
where $c=\min_{p\in \Sigma_1}|\nabla f|^2$.
On the other hand,  by  (\ref{linear-decay-curvature}) and (\ref{inequality-cao chen}), we have
\begin{align} R(p)\le \frac{C}{f(p)},~\forall~p\in \{q\in M|~ f(q)\ge 1 \}.\notag
\end{align}
Hence,
\begin{align}\label{lem-uniform decay of t on M}
R(p,t)\le \frac{C}{f(\phi_{t}(p))}\le \frac{C}{1+c|t|},~\forall~p\in\{q\in M|~f(q)\ge 1 \}.
\end{align}

Let $x\in M_{\infty}$ and  $d_{g_{\infty}(0)}(x,p_{\infty})=\overline{r}$.  By the convergence in Lemma \ref{lem-convergence and splitting}, there are $x_i\in B(p_i,2\overline{r};g_{p_i}(0))$ such that $x_i\to x$ as $i\to\infty$.  Moreover,
\begin{align}
\lim_{i\to\infty} R_{g_{p_i}(t)}(x_i)=R_{\infty}(x,t).\notag
\end{align}
Note that $x_i\in B(p_i,2\overline{r};g_{p_i}(0))\subseteq M_{p_i,2\overline{r}\sqrt{R_{\max}}}$. It means that
\begin{align}
f(x_i)\ge f(p_i)-2\overline{r}\sqrt{\frac{R_{\max}}{R(p_i)}}>>1,~as~i\to\infty.\notag
\end{align}
Hence, by (\ref{lem-uniform decay of t on M}), we derive
\begin{align}
R_{g_{p_i}(t)}(x_i)=\frac{R(x_i,R^{-1}(p_i)t)}{R(p_i)}\le \frac{C}{R(p_i)+c|t|}\le \frac{2C}{c|t|},~as~i\to\infty.\notag
\end{align}
Let $i\to\infty$, we get  (\ref{decay-on-sigma}).
\end{proof}

 By Lemma \ref{lem-convergence and splitting} and Lemma \ref{sphere-diff},   (\ref{decay-on-sigma}) in Lemma \ref{lem-decay of t} implies (\ref{curvature-decay-sn}). The  proof of Theorem  \ref{main theorem-strong}  is completed.

\section{Proof of Theorem \ref{main theorem-2}-II}

In this section,  we complete the proof of  Theorem \ref{main theorem-2}.    We need to  describe  the  structure  of level set  $\Sigma_r$ of $(M,g,f)$ without assumption of  positive Ricci curvature.

\begin{lem}\label{lem-level set structure nonnegative case}
Let $(M,g,f)$ be a   non-flat steady Ricci soliton with nonnegative sectional curvature.  Let $S=\{p\in M|\nabla f(p)=0\}$ be  set of equilibrium points of  $(M,g,f)$. Suppose that
 scalar curvature $R$ of $g$ decays uniformly.  Then the following statements are true.

 \begin{enumerate}
 \item[(1)]$(S,g_S)$ is a compact flat manifold, where $g_{S}$ is an induced metric $g$.
 \item[(2)]Let $o\in S$.  Then level set $\Sigma_r=\{x\in M|f(x)=r\}$ is a closed hypersurface of $M$. Moreover,  each  $\Sigma_r$ is   diffeomorphic to each other  whenever  $r>f(o)$.
\item[(3)] $M_r=\{x\in M| ~f(x)\le r\}$ is  compact for any $r>f(o)$.
\item[(4)] $f$ satisfies (\ref{inequality-cao chen}).

 \end{enumerate}

\end{lem}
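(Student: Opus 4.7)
My plan is to reduce everything to the positive Ricci case treated in Section 2 by passing to the universal cover and invoking a de Rham splitting. First I would use Hamilton's strong maximum principle for the Ricci tensor, applied to the Ricci flow $g(t)=\phi_t^{\ast}g$ (which, as a pullback, preserves nonnegative sectional curvature, hence nonnegative Ricci, for all $t$), to conclude that the null distribution $\mathcal{K}=\ker\mathrm{Ric}$ has locally constant rank $k$, is smooth, and is parallel on $(M,g)$. Its orthogonal complement $\mathcal{K}^{\perp}$ is then automatically parallel. The non-flatness of $M$ combined with nonnegative sectional curvature forces $k<n$ (since $\mathrm{Ric}\equiv 0$ plus $\mathrm{Sec}\ge 0$ gives flatness), and the uniform decay of $R$ together with $R\not\equiv 0$ guarantees that $R$ attains its positive maximum $R_{\max}$, so the compact set $S=\{R=R_{\max}\}=\{\nabla f=0\}$ is nonempty.

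Next I would pass to the universal cover $(\tilde M,\tilde g)$ and apply de Rham's theorem: $\tilde M$ splits isometrically as $\mathbb{R}^{k}\times N$, with the $\mathbb{R}^{k}$-factor integrating $\tilde{\mathcal{K}}$ (hence flat and Ricci-flat) and $(N,g_N)$ complete with strictly positive Ricci curvature. Since $\nabla\tilde f$ is parallel along each $\mathbb{R}^{k}$-leaf (as $\nabla_V\nabla\tilde f=\mathrm{Ric}(V)=0$ for $V\in\tilde{\mathcal{K}}$), a short computation in product coordinates forces $\tilde f(x,y)=A\cdot x+\varphi(y)$ for some constant $A\in\mathbb{R}^{k}$ and $\varphi\in C^{\infty}(N)$; existence of a critical point then gives $A=0$, so $\tilde f=\varphi(y)$, and $(N,g_N,\varphi)$ is itself a steady gradient Ricci soliton with positive Ricci. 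The uniform decay of $R$ on $M$ transfers to $R_N$ on $N$ (since the $\mathbb{R}^{k}$-factor becomes cocompact in the quotient, as explained below), so the Cao-Chen estimate and Lemma~\ref{sphere-diffeo} applied to $N$ give a unique equilibrium $o_N$ with $c_1\rho_N\le\varphi\le c_2\rho_N$ and $\varphi^{-1}(r)\cong\mathbb{S}^{n-k-1}$ for $r>\varphi(o_N)$. Hence $\tilde S=\mathbb{R}^{k}\times\{o_N\}$.

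Then I would descend through $\Gamma=\pi_1(M)$ to extract the four conclusions. By uniqueness of the de Rham decomposition each $\gamma\in\Gamma$ splits as $\gamma=\gamma_1\times\gamma_2$ with $\gamma_1\in\mathrm{Isom}(\mathbb{R}^{k})$ and $\gamma_2\in\mathrm{Isom}(N)$; invariance $\tilde f\circ\gamma=\tilde f$ forces $\gamma_2$ to preserve $\varphi$ and hence fix the unique critical point $o_N$. The free action of $\Gamma$ on $\tilde M$ then makes the projection $\Gamma\to\Gamma_1\subset\mathrm{Isom}(\mathbb{R}^{k})$ injective and the induced $\Gamma_1$-action on $\mathbb{R}^{k}$ free; compactness of $S$ makes it cocompact, so $\Gamma_1$ is a Bieberbach group and $S=\mathbb{R}^{k}/\Gamma_1$ is a compact flat manifold, proving (1). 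For $r>f(o)=\varphi(o_N)$ the lift $\tilde\Sigma_r=\mathbb{R}^{k}\times\varphi^{-1}(r)$ is a smooth hypersurface, and the flow of $\nabla\varphi/|\nabla\varphi|^2$ on $N$ is $\Gamma$-equivariant and identifies $\Sigma_{r_1}$ with $\Sigma_{r_2}$ diffeomorphically, giving (2). The same product structure yields $\tilde M_r=\mathbb{R}^{k}\times N_r$ with $N_r$ compact and $\Gamma_1$-cocompact on $\mathbb{R}^{k}$, so $M_r$ is compact, giving (3). Finally, the product metric gives $\rho_{\tilde M}((x,y),(x_0,o_N))=\sqrt{|x-x_0|^2+\rho_N(y,o_N)^2}$, and cocompactness of $\Gamma_1$ yields $\rho_M(p,o)=\rho_N(y,o_N)+O(1)$; combined with $f=\varphi(y)$ and the Cao-Chen bound this gives (4).

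The main obstacle will be Step~1, establishing the parallelism of $\ker\mathrm{Ric}$ when we only have nonnegative sectional curvature rather than nonnegative curvature operator. The crucial observation is that although nonnegative sectional curvature is generally not preserved by Ricci flow in dimensions $\ge 4$, in our setting $g(t)=\phi_t^{\ast}g$ is literally a pullback of $g$ and therefore automatically preserves nonnegative sectional (hence nonnegative Ricci) throughout; this places the situation squarely within the scope of Hamilton's classical strong maximum principle for the Ricci tensor. A secondary technical point is tracking the deck group compatibly with the de Rham splitting, which is essential both for endowing $S$ with a compact flat structure in (1) and for obtaining the lower bound in (4) through cocompactness of the flat factor.
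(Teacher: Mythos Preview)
Your overall architecture matches the paper's: lift to the universal cover, split off a flat $\mathbb{R}^{k}$-factor, reduce to a positive-Ricci steady soliton $(N,h,\varphi)$, and descend through the deck group. The paper obtains the splitting by citing \cite{GLX}; your route via the strong maximum principle is in the same spirit, though you should be aware that Hamilton's classical strong maximum principle is stated for the curvature \emph{operator}, not for $\mathrm{Ric}$ directly, so one either needs the elliptic $\Delta_f$-version on the soliton or simply invokes \cite{GLX} as the paper does. Your treatment of the descent (Bieberbach structure on $\Gamma_1$, product description of $\tilde\Sigma_r$ and $\tilde M_r$, and the $O(1)$ comparison of $\rho_M$ with $\rho_N$) is more structured than the paper's flow-based arguments for (2)--(4), and would work once the key step is secured.

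The genuine gap is your argument that $A=0$, which is circular. You claim $S=\{R=R_{\max}\}=\{\nabla f=0\}$ and then use this critical point to force $A=0$. But the identity $|\nabla f|^{2}+R=C$ only gives $S=\{R=C\}$; in the product picture, at the point $o_N\in N$ where $R_N$ is maximized one has $\nabla_N\varphi(o_N)=0$ (by $\mathrm{Ric}_N>0$), hence $|\tilde\nabla\tilde f|^{2}=|A|^{2}$ there, so $C=R_{\max}+|A|^{2}$. Thus $S\neq\emptyset$ is \emph{equivalent} to $A=0$, and you cannot use the former to deduce the latter. The same circularity contaminates your transfer of uniform curvature decay to $N$ ``since the $\mathbb{R}^{k}$-factor becomes cocompact'': cocompactness is established only after $S$ is known to be nonempty.

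The paper breaks this circle by an independent argument that uses only the uniform decay of $R$ on $M$. Fix $q\in N$ and a coordinate line $\ell\subset\{q\}\times\mathbb{R}^{k}$. Along $\ell$ the scalar curvature is constant, $\tilde R\equiv R_N(q)>0$, so the projection $E=\pi(\ell)\subset M$ lies in $\{R\ge R_N(q)\}$, which is bounded by the decay hypothesis. Then $|f(p_1)-f(p_2)|\le\sqrt{R_{\max}}\cdot\mathrm{diam}(E)$ for $p_1,p_2\in E$, while $\tilde f|_{\ell}$ is affine with the same oscillation; letting the line parameter go to infinity forces the linear coefficient to vanish. Once $A=0$ is obtained this way, your outline for (1)--(4) goes through essentially as written.
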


\begin{proof}
(1)  Let  $(\widetilde{M},\widetilde{g})$ be the universal cover  of $(M,g,f)$ with the covering map $\pi$. Let $\widetilde{f}=f\circ\pi$.
It is  clear that $(\widetilde{M},\widetilde{g}, \widetilde{f})$ is also a  steady gradient Ricci soliton.
Then by \cite[Theorem 1.1]{GLX}, there is an $(n-k)$-dimensional  steady gradient Ricci soliton  $(N,h, f_N)$ with nonnegative sectional curvature and positive Ricci curvature such that $(\widetilde{M},\widetilde{g})=(N,h)\times\mathbb{R}^k$ $(k\ge 0$).    Let $(q,y')$ be a coordinate system  on $\widetilde{M}=N\times\mathbb{R}^k$, where $q\in N$ and
$y'=( y_1,\cdots,y_k)\in \mathbb{R}^k.$  We claim
\begin{align}\label{f=constant}
\frac{\partial\widetilde{f}}{\partial y_j}=0,~\forall~1\le j\le k.
\end{align}

Fix $q \in N$ and $y_i$ with  $i\neq j$.  Let
\begin{align}
\widetilde{f}_j(y)=\widetilde{f}(q, y_1,\cdots, y_{j-1},y,y_{j+1},\cdots,y_k),~\forall  ~y\in \mathbb R.\notag
\end{align}
By the Ricci soliton equation, we have
\begin{align}
\frac{\partial^2 \widetilde{f}_j}{\partial y^2}=\widetilde{\rm Ric}(\frac{\partial}{\partial y},\frac{\partial}{\partial y})=0.\notag
\end{align}
It follows
\begin{align}\label{f=constant-2}
\widetilde{f}_j(y)=c_1y+c_2,
\end{align}
where $c_1$ and $c_2$ are constants.
Thus
\begin{align}\label{linear-y}
|\widetilde{f}_j(y)-\widetilde{f}_j(-y)|=2|c_1y|, ~\forall ~y\in \mathbb R.
\end{align}

We define a set
$$E=\{p\in M|~p=\pi(q, y_1,\cdots, y_{j-1},y, y_{j+1},\cdots,y_k),~ y\in\mathbb{R}\}.$$
Then for any $p\in E$, we have
\begin{align}
R(p)=\widetilde{R}(q, y_1,\cdots, y_{j-1},y,y_{j+1},\cdots,y_k)=R_{h}(q)>0.\notag
\end{align}
Thus
\begin{align}
E\subseteq \{p\in M|~R(p)\le R_{h}(q)\}=E'.\notag
\end{align}
Since $R$ decays uniformly,  the set $E'$ is bounded and so is $E$.  Hence
\begin{align}
{\rm diam}(E,g)=D<+\infty.\notag
\end{align}
Note that $|\nabla f|\le \sqrt{R_{\max}}$. Therefore,  for any $p_1,p_2\in E$, we integrate from $p_2$ to $p_1$ along a minimal geodesic to get
\begin{align}
f(p_1)-f(p_2)\le \sqrt{R_{\max}}d(p_1,p_2)\le D\sqrt{R_{\max}}. \notag
\end{align}
Choose $p_1=\pi(q, y_1,\cdots, y_{j-1},m,y_{j+1},\cdots,y_k)$ and $p_2=\pi(q, y_1,\cdots, y_{j-1},$ $-m,y_{j+1},\cdots,y_k)$.
By (\ref{linear-y}), we derive
\begin{align}
2|c_1m|=|\widetilde{f}_j(m)-\widetilde{f}_j(-m)|=|f(p_1)-f(p_2)|\le D\sqrt{R_{\max}},\notag
\end{align}
As a consequence,  $c_1=0$ by taking $m\to\infty$.  This implies  (\ref{f=constant})  by (\ref{f=constant-2}).

 By (\ref{f=constant}), we may assume that  $f_N(q)=\widetilde{f}(q,\cdot)$. Since $R(p)$ attains  its maximum in $M$, $R_h(q)$ attains its maximum at some point $o_N\in N$. Note that  ${\rm Ric}(h)$ is positive.  Then  by an argument in \cite[Corollary 2.2]{DZ5},  we  see  $\nabla_{h}f_N (o_N)=0$. Moreover, such a  $o_N$ is unique.  Thus
 \begin{align}\label{s-set}
\pi^{-1}(S)=\{o_N\}\times\mathbb{R}^k.
\end{align}
Since  $S=\{p\in M|~R(p)=R_{\max}\}$ is compact by the curvature decay,  $(S,g_{S})$ is a compact flat manifold.

(2) Let  $o\in S$. Then by (\ref{s-set}), we have
$f(S)\equiv f(o)$.   By (\ref{identity}), it follows that  for any $r>f(o)$,
  $$|\nabla f|^2(p)=R_{{\rm max}}-R(p)>0, ~\forall~ p\in \Sigma_r.$$
  Thus $\Sigma_r$ is a hypersurface of $M$.  In the following,  we show that it is bounded.

  Choose $q\in N$ such that  $\pi(q,y)=p\in \Sigma_r$. Then  $f_N(q)=r$. Let $(\phi_N)_t$ be  a one-parameter diffeomorphisms generated by $-\nabla_{h} f_N$. Thus
\begin{align}\label{convergence-map-1}
d_{h}((\phi_N)_t(q),o_N)\to0,~{\rm as}~t\to\infty.
\end{align}
Moreover,  the above  convergence is uniform for all $q\in \{x\in N|~f_N(x)=r\}$. Similarly,  we have
\begin{align}\label{convergence-map-2}
\widetilde{d}(\widetilde{\phi}_t(q,y),(o_N,y))\to0,~{\rm as}~t\to\infty,
\end{align}
where  $\widetilde{\phi}_t$ is a one-parameter diffeomorphisms  generated by $-\widetilde{\nabla}\widetilde{f}$ and
the convergence of (\ref{convergence-map-2}) is uniform on $\{\tilde x\in \widetilde{M}| ~\widetilde{f}(\tilde x)=r\}$. Note that $\pi(\widetilde{\phi}_t(q,y))=\phi_t(p)$. Thus
\begin{align}
d(\phi_t(p),\pi(o_N,y))\le \widetilde{d}(\widetilde{\phi}_t(q,y),(o_N,y))\to 0,~{\rm as}~t\to\infty.\notag
\end{align}
 It follows that
\begin{align}\label{convergence-map-3}
d(\phi_t(p),S)\to 0,,~{\rm as}~t\to\infty.
\end{align}
Moreover, the convergence is uniform on $\Sigma_r$.

By (\ref{convergence-map-3}),  there is a sufficiently large  $t_0$ such that
\begin{align}\label{s-distance}
d(\phi_{t}(\Sigma_r),S)\le 1,~\forall~t\ge t_0.
\end{align}
Let $\gamma_p(s)=\phi_s(p)$, $s\in[0,t_0]$. Then,
\begin{align}\label{length of phi_t}
d(p,\phi_{t_0}(p))\le {\rm Length}(\gamma_p,g)=\int_{0}^{t_0}|\nabla f|(\phi_t(p)){\rm d}s\le t_0\sqrt{R_{\max}}.
\end{align}
It follows that
\begin{align}
d(\Sigma_r,S)\le d(\Sigma_r,\phi_{t_0}(\Sigma_r))+d(\phi_{t_0}(\Sigma_r),S)\le t_0\sqrt{R_{\max}}+1.\notag
\end{align}
Hence $\Sigma_r$ is bounded since $S$ is compact.
$\Sigma_{r_1}$ and $\Sigma_{r_2}$ are diffeomorphic to each other for all $r_1,r_2>f(o)$
by the fact  $|\nabla f|(x)>0$ for all $f(x)> f(o)$.

(3) Since $\Sigma_r$ is a closed set,  it suffices to show that the set  $M_r'=\{f(o)<f(x)<r\}$  is bounded by the above  properties (1) and (2).  For any $x\in M_r'$, choose a point $x'\in\Sigma_r$ and a number $t_x>0$ such that $\phi_{t_x}(x')=x$. If $t_x\ge t_0$, then $d(x,S)\le 1$ by (\ref{s-distance}). If $t_x<t_0$, then $d(x,\Sigma_r)\le d(x,x')\le t_0\sqrt{R_{\max}}$ by (\ref{length of phi_t}). Thus
\begin{align}
d(x,S)\le d(x,\Sigma_r)+d(\Sigma_r,S)\le 2(t_0\sqrt{R_{\max}}+1).
\end{align}
Hence,  $M_r'$ is bounded.

(4)  Note that  $(N,h,f_N)$ has positive Ricci curvature. Then similar to  (\ref{inequality-cao chen}), we have
\begin{align}
f_N(q)\ge Cd_h(q,o_N),~\forall~f_N(q)\ge r_0.\notag
\end{align}
It follows that
\begin{align}
\widetilde{f}(\tilde x)\ge C\widetilde{d}(\tilde x,\{o_N\}\times\mathbb{R}^k),~\widetilde{f}(\tilde x)\ge r_0,\notag
\end{align}
where $\pi(\widetilde{x})=x$, Thus
\begin{align}
f(x)=\widetilde{f}(\tilde x)\ge C\widetilde{d}(\widetilde{x},\{o_N\}\times\mathbb{R}^k)\ge Cd(x,S),~\forall~f(x)\ge r_0,\notag
\end{align}
As a consequence, we get
\begin{align}
f(x)\ge C\rho(x)-CC_S,~\forall~f(x)\ge r_0,\notag
\end{align}
where $C_S={\rm diam}(S,g)$.
Since $\{x\in M|~\rho(x)\le k\}_{k\in\mathbb{N}}$ exhaust $M$ as $k\to\infty$ and $M_{r_0}$ is compact by the above  property (3), there exists a constant $r_0' $ $(\gg r_0)$ such that
\begin{align}
M_{r_0}\subset \{x\in M|~\rho(x)<r_0'\}.\notag
\end{align}
Namely,
\begin{align}
\{x\in M|~  \rho(x)\ge r_0'\}\subset \{x\in M|~f(x)\ge r_0\}\notag
\end{align}
Hence
\begin{align}
 f(x)\ge C\rho(x)-CC_S,~\forall~\rho(x)\ge r_0'.\notag
\end{align}
Therefore, we get the left side of  (\ref{inequality-cao chen}). The right side follows from the fact
\begin{align}
f(x)\le f(o)+\sqrt{R_{\max}}\rho(x),~\forall~x\in M.\notag
\end{align}
The lemma is proved.

\end{proof}

\begin{rem}
\begin{enumerate}
\item[(1)]  It is possible that $\Sigma$ is empty for  steady Ricci soltions with nonnegative sectional curvature. For example, $(\mathbb{R}^n,g_{Euclid}$, $f=\Sigma_{i=1}^nx_i)$ is a steady Ricci soliton with $|\nabla f|^2\equiv n$.

\item[(2)] The estimate $\widetilde{f}(x)\ge C\widetilde{\rho}(x)$, ~$\widetilde{\rho}(x)\ge r_0$ fails on the universal cover  $(\widetilde{M},\widetilde{g})$ of $(M,g)$  in Lemma \ref{lem-level set structure nonnegative case},  since $\widetilde{R}(x)$ doesn't decay uniformly.

\end{enumerate}

\end{rem}

\begin{cor}\label{cor-lower decay nonnegative case}
Let $(M,g, f)$ be a non-flat  $\kappa$-noncollapsed steady Ricci soliton  with nonnegative curvature operator and uniform curvature decay.  Then   scalar curvature  of $g$ satisfies
(\ref{lower-bound-r}).
\end{cor}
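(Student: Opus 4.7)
The plan is to pass to the universal cover, exploit the de Rham splitting provided by the proof of Lemma \ref{lem-level set structure nonnegative case}, apply Theorem \ref{lower-bound-scalar-curvature} to the non-flat factor, and push the resulting lower bound back down to $(M,g)$.

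First I would form the universal cover $\pi : (\widetilde M,\widetilde g)\to (M,g)$ with lifted potential $\widetilde f=f\circ\pi$. Since $(M,g)$ is non-flat, has nonnegative sectional curvature, and uniformly decaying scalar curvature, the proof of Lemma \ref{lem-level set structure nonnegative case} gives a splitting $(\widetilde M,\widetilde g)=(N,h)\times\mathbb{R}^k$ where $(N,h,f_N)$ is a steady Ricci soliton with positive Ricci curvature, $\widetilde f(q,y)=f_N(q)$, and an equilibrium point $o_N\in N$ with $\nabla_h f_N(o_N)=0$. Because the curvature operator of $\widetilde g$ is nonnegative and the $\mathbb{R}^k$ factor is flat, $(N,h)$ has nonnegative curvature operator. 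Next I would verify that $(N,h)$ is $\kappa'$-noncollapsed for some $\kappa'>0$. Since $\pi$ is a local isometry and a covering map, $\mathrm{Vol}(B_{\widetilde M}(\tilde x,r))\ge \mathrm{Vol}(B_M(\pi(\tilde x),r))$ at every scale while curvatures at corresponding points agree, so $(\widetilde M,\widetilde g)$ inherits $\kappa$-noncollapsedness from $(M,g)$. The product structure and the sandwich
\[
B_N(q,r/\sqrt 2)\times B_{\mathbb{R}^k}(y,r/\sqrt 2)\subseteq B_{\widetilde M}((q,y),r)\subseteq B_N(q,r)\times B_{\mathbb{R}^k}(y,r)
\]
then transfer the noncollapsing from $(\widetilde M,\widetilde g)$ to $(N,h)$ with a constant $\kappa'=\kappa'(\kappa,k)>0$.

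With these ingredients, Theorem \ref{lower-bound-scalar-curvature} applies to $(N,h,f_N)$ and yields $R_h(q)\ge c_0/\rho_N(q)$ for $\rho_N(q)\ge r_0^{(N)}$, where $\rho_N(q)=d_h(q,o_N)$. To transfer this to $(M,g)$, for $x\in M$ pick a lift $\tilde x=(q,y)\in N\times\mathbb{R}^k$; then $R(x)=\widetilde R(\tilde x)=R_h(q)$ because scalar curvature is preserved under the covering and unchanged by the flat factor. Lemma \ref{lem-level set structure nonnegative case}(4) gives $f(x)\le c_2\rho(x)$, and the Cao--Chen estimate applied on $(N,h,f_N)$ (valid since it has positive Ricci curvature with equilibrium point $o_N$) gives $\rho_N(q)\le c\, f_N(q)=c\, f(x)\le cc_2\,\rho(x)$ for $\rho(x)$ large. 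Combining,
\[
R(x)=R_h(q)\ge \frac{c_0}{\rho_N(q)}\ge \frac{c_0}{cc_2\,\rho(x)},
\]
which is exactly the lower bound \eqref{lower-bound-r}.

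The main technical point is the noncollapsedness transfer: one has to control the possible loss of the noncollapsing constant when lifting to the universal cover and then descending to the factor $N$, and also check that the scales appearing in the definition of $\kappa$-noncollapsedness match up consistently through the splitting. Once this transfer is carried out, the remainder is routine bookkeeping: a direct appeal to Theorem \ref{lower-bound-scalar-curvature} on $(N,h,f_N)$ together with Lemma \ref{lem-level set structure nonnegative case}(4) and the Cao--Chen distance estimate on $N$ to compare $\rho_N(q)$ with $\rho(x)$.
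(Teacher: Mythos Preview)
Your proposal is correct and follows essentially the same route as the paper: pass to the universal cover, split off the flat factor via Lemma~\ref{lem-level set structure nonnegative case}, apply Theorem~\ref{lower-bound-scalar-curvature} to the positively Ricci-curved factor $(N,h,f_N)$, and descend using the linear comparison between $f$ and $\rho$. The only cosmetic difference is that the paper phrases the output of Theorem~\ref{lower-bound-scalar-curvature} on $N$ as $R_N(q)f_N(q)\ge C_0$ (absorbing the Cao--Chen comparison on $N$ into this step) and then passes through $f$ directly, whereas you keep the bound in terms of $\rho_N$ and convert at the end; the content is identical. Your explicit justification of the $\kappa$-noncollapsedness transfer to $\widetilde M$ and then to $N$ is a point the paper simply asserts, so your treatment is in fact more complete there.
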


\begin{proof}Let  $(\widetilde{M}, \tilde g, \tilde f)$  be the covering   steady Ricci soliton of  $(M,  g, f)$ as in  Lemma \ref{lem-level set structure nonnegative case}.  Then $\widetilde{M}=N\times \mathbb R^k$  is also $\kappa$-noncollapsed, and   $(N,h,f_N)$ is a $\kappa$-noncollapsed steady gradient Ricci soliton with nonnegative curvature operator and positive Ricci curvature, where $f_N(q)=\tilde f(q,\cdot)$.  Moreover,  $(N,h,f_N)$ admits a unique equilibrium point $o_N$.  Thus by Theorem \ref{lower-bound-scalar-curvature}, we have
\begin{align}
R_N(q)f_N(q)\ge C_0,~\forall~f_N(q)\ge r_0,~q\in N.\notag
\end{align}
It follows that
\begin{align}
\widetilde{R}(x)\widetilde{f}(x) \ge C_0,~\forall~\widetilde{f}(\tilde x)\ge r_0,~\tilde x\in \widetilde{M},\notag
\end{align}
and
\begin{align}
R(x)f(x)\ge  C_0,~\forall~f(x)\ge  r_0,~x\in M.\notag
\end{align}
Combining the above  with (\ref{inequality-cao chen}), we get (\ref{lower-bound-r}) immediately.

\end{proof}

With the help of  (2)-(4) in Lemma \ref{lem-level set structure nonnegative case}, we can extend the  arguments in Section 2-4 to prove
a weak version of Theorem \ref{main theorem-2}.

\begin{theo}\label{theo-convergence nonnegative case}
Let $(M,g)$ be a  $\kappa$-noncollapsed steady Ricci soliton  with nonnegative sectional curvature.  Suppose that  $(M,g)$  satisfies (\ref{linear-decay-curvature}).   Then, for any $p_{i}\rightarrow \infty$,
 rescaled flows $(M,R(p_i)g(R^{-1}(p_i)t),p_{i})$ converge   subsequently to
$(\mathbb{R}\times \Sigma,$ $ds^2+g_{\Sigma}(t))$ ( $t\in (-\infty,0]$) in the Cheeger-Gromov topology, where   $\Sigma$ is diffeormorphic to  a  level set $\Sigma_{r_0}$ in    $(M,g)$ and
 $(\Sigma,$ $g_{\Sigma}(t))$ is a $\kappa$-noncollapsed ancient Ricci flow with nonnegative sectional curvature. Moreover,  scalar curvature $R_{\Sigma}(x,t)$ of $(\Sigma,g_{\Sigma}(t))$ satisfies
 \begin{align}\label{decay of t}
 R_{\Sigma}(x,t)\le\frac{C}{|t|}, ~\forall~x\in \Sigma,
 \end{align}
 where $C$ is a uniform constant.
\end{theo}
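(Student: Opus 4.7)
The plan is to extend the arguments of Sections 2--4 to the nonnegative Ricci setting, using the structural results of Lemma \ref{lem-level set structure nonnegative case} and Corollary \ref{cor-lower decay nonnegative case} to substitute for the strict positivity of Ricci curvature that was assumed in Section 2. The key observation is that, even though ${\rm Ric}(g) \geq 0$ may have zero eigenvalues, Lemma \ref{lem-level set structure nonnegative case} provides exactly what we need: a compact flat equilibrium set $S$ on which $f$ is constant (so the identity $|\nabla f|^2 + R \equiv R_{\max}$ still holds, with $R_{\max}>0$ since (\ref{linear-decay-curvature}) forces the soliton to be non-flat); each level set $\Sigma_r$ with $r > f(o)$ is a compact hypersurface of a fixed diffeomorphism type; and the Cao--Chen linear estimate $f \asymp \rho$ from (\ref{inequality-cao chen}) holds.

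First, I would check that the technical lemmas of Sections 2--3 carry over verbatim. The inclusion in Lemma \ref{geodesic ball in level set} uses only the identity $|\nabla f|^2 + R \equiv R_{\max}$, so it survives; Lemma \ref{lem-pointwise curvature estimate} then follows together with the monotonicity $\partial_t R = 2\,{\rm Ric}(\nabla f,\nabla f) \geq 0$, which only requires ${\rm Ric} \geq 0$. The second-variation argument in Lemma \ref{lem-differential inequality of distance function} and the resulting diameter estimate in Proposition \ref{theorem-diameter estimate} rely only on the Gauss-equation bound $|{\rm Ric} - \overline{\rm Ric}|\leq C/r^{3/2}$ on level sets, which in turn uses only the curvature decay and the fact that $|\nabla f|^2 \geq R_{\max}/2$ outside a bounded neighborhood of $S$; both remain valid. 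Corollary \ref{set-mr-contain-2} extends by the same argument, once one replaces the unique equilibrium point of Section 2 by the compact equilibrium set $S$.

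Second, I would re-run the convergence and splitting argument of Lemma \ref{lem-convergence and splitting}. By the extended Lemma \ref{geodesic ball in level set} and (\ref{linear-decay-curvature}), the rescaled flows $(M, R(p_i)g(R^{-1}(p_i)t), p_i)$ have uniformly bounded curvature on each fixed-radius geodesic ball; together with $\kappa$-noncollapsedness and the extended higher-derivative estimates of Remark \ref{rem-pointwise curvature estimate}, Hamilton's compactness theorem delivers a smooth limit flow $(M_\infty, g_\infty(t), p_\infty)$ on $t\in(-\infty,0]$ with nonnegative sectional curvature. The rescaled vector field $X_{(i)} = R(p_i)^{-1/2}\nabla f$ has $|X_{(i)}|(p_i) = |\nabla f|(p_i) \to \sqrt{R_{\max}} > 0$ and $|\nabla_{(g_{p_i})}X_{(i)}| = |{\rm Ric}|/\sqrt{R(p_i)} \to 0$, so it converges to a non-trivial parallel vector field $X_{(\infty)}$ on $M_\infty$; the same integral-curve argument using the extended Corollary \ref{set-mr-contain-2} shows that $X_{(\infty)}$ generates a bi-infinite line through $p_\infty$, and Cheeger--Gromoll splitting yields $M_\infty = \mathbb{R} \times \Sigma$.

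Third, I would identify $\Sigma$ and verify the curvature decay. The analog of Proposition \ref{cor-limit of level set} (the volume lower bound of Lemma \ref{lem-volume lower bound} and the intrinsic higher-derivative estimates transfer without change) gives subsequential Cheeger--Gromov convergence of the rescaled level sets $(\Sigma_{f(p_i)}, R(p_i)g|_{\Sigma_{f(p_i)}})$ to a closed smooth limit, which by Lemma \ref{lem-level set structure nonnegative case}(2) is diffeomorphic to any fixed level set $\Sigma_{r_0}\subset M$. The orthogonality argument of Lemma \ref{sphere-diff} (tangent vectors to $\Sigma_{f(p_i)}$ are $g$-orthogonal to $\nabla f$, hence to $X_{(\infty)}$ in the limit) identifies this $\Sigma$ as the fibre of the splitting, and the scalar curvature decay (\ref{decay of t}) follows exactly as in Lemma \ref{lem-decay of t}, using only the monotonicity of $R$ along $\phi_t$ and the linear decay on $(M,g)$. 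The main obstacle will be the first step: systematically verifying that every estimate in Sections 2--3 uses only ${\rm Ric}\geq 0$ together with the structural information of Lemma \ref{lem-level set structure nonnegative case}, rather than the strict positivity invoked there; once that bookkeeping is done, the material of Section 4 transcribes with only the cosmetic change that the cross section $\Sigma$ is no longer forced to be $\mathbb{S}^{n-1}$ but is a diffeomorphism copy of $\Sigma_{r_0}$.
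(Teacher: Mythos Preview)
Your proposal is correct and follows exactly the route the paper indicates: the paper's own proof consists solely of the remark that, with properties (2)--(4) of Lemma~\ref{lem-level set structure nonnegative case} in hand, the arguments of Sections~2--4 (culminating in Theorem~\ref{main theorem-strong}) go through verbatim with $\mathbb{S}^{n-1}$ replaced by the level-set model $\Sigma$. Your step-by-step verification that each ingredient (Lemmas~\ref{geodesic ball in level set}, \ref{lem-pointwise curvature estimate}, \ref{lem-differential inequality of distance function}, Proposition~\ref{theorem-diameter estimate}, Corollary~\ref{set-mr-contain-2}, and the splitting and identification Lemmas~\ref{lem-convergence and splitting}--\ref{lem-decay of t}) requires only ${\rm Ric}\ge 0$, the identity $|\nabla f|^2+R\equiv R_{\max}$, the compactness and fixed diffeomorphism type of level sets, and the linear growth of $f$, is precisely the bookkeeping the paper leaves to the reader.
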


The proof of  Theorem \ref{theo-convergence nonnegative case} is almost the same as one of Theorem \ref{main theorem-strong} by replacing $\mathbb S^{n-1}$ with $\Sigma$. We leave it to the readers.

\begin{proof}[Proof of Theorem \ref{main theorem-2}] By Theorem \ref{main theorem-strong}, it suffices to show that  $(M,g)$ has positive Ricci curvature.  Otherwise, we assume that the Ricci curvature is not strictly positive.
Let  $(\widetilde{M}, \tilde g, \tilde f)$  be the covering   steady Ricci soliton of  $(M,  g, f)$ as in  Lemma \ref{lem-level set structure nonnegative case}.  Then  $\widetilde{M}$ splits off   $\mathbb{R}^k$ $(k\ge 1$) as $N\times\mathbb{R}^k$, where  $(N,h)$ has positive Ricci curvature.  Let $\widetilde{V}\in T\widetilde{M}$ be a vector field parallel to $\mathbb{R}^k$ and $|\tilde V|_{\widetilde{g}}\equiv1$. Then $\widetilde{g}(\widetilde{V},\widetilde{\nabla}\widetilde{f})=0$ since $\widetilde{f}|_{\mathbb{R}^k}\equiv {\rm const.}$  Let $\{p_i\}$  be a sequence of points in $M$  with $f(p_i)\to \infty$,  and $\{\widetilde{p}_i\}$ in $\widetilde{M}$  with  $\pi(\widetilde{p}_i)=p_i$. Let $W_{(i)}=R(p_i)^{-1/2}\pi_{\ast}(V(\widetilde{p}_i))\in T_{p_i}M$ and $X_{(i)}=R(p_i)^{-1/2}\nabla f$. Then
\begin{align}
{\rm Ric}_{g(t)}(W_{(i)},W_{(i)})=0,~|W_{(i)}|_{R(p_i)g}\equiv1,~\langle W_{(i)},X_{(i)}(p_i)\rangle=0,~\forall~i,~t\le0.\notag
\end{align}
Since $(M,R(p_i)g(R^{-1}(p_i)t),p_{i})$ converges subsequently to
$(\mathbb{R}\times \Sigma,$ $ds^2+g_{\Sigma}(t),p_{\infty})$  by Theorem \ref{theo-convergence nonnegative case},  as in the proof of Lemma \ref{lem-convergence and splitting}, we get
$X_{(i)}\to X_{(\infty)}$, where the limit vector field  $X_{(\infty)}$ is parallel to $\mathbb{R}$   in  $\mathbb{R}\times \Sigma$.
Moreover
\begin{align}\label{orthogonal-1}
W_{(i)}\to W_{(\infty)},~\langle W_{(\infty)},X_{(\infty)}(p_{\infty})\rangle=0,
\end{align}
and
\begin{align}\label{orthogonal-2}
|W_{(\infty)}|_{g_{\Sigma}(0)+ds^2}=1,~{\rm Ric}_{g_{\Sigma}(t)+ds^2}(W_{(\infty)},W_{(\infty)})=0,~\forall~t\le 0.
\end{align}

 Since $( \Sigma, g_{\Sigma}(t))$ satisfies (\ref{decay of t}),
 by Theorem 3.1 in \cite{Na},  we see that $(\Sigma,\tau_i g_{\Sigma}(\tau_i^{-1} t),q)$
  subsequently converges to a shrinking Ricci soliton   $(\Sigma_{\infty},$ $g_{\Sigma_{\infty}}(t),q_{\infty})$  for any fixed $q\in \Sigma$ and  any sequence $\{\tau_i\}\to\infty$.  On the other hand, by a result in \cite{Ni},  there exists a constant $C_1$  such that
 \begin{align}
 {\rm diam}( \Sigma, g_{\Sigma}(t))\le C_1\sqrt{|t|}.\notag
 \end{align}
 In particular,
 \begin{align}\label{diam estimate}
 {\rm diam}(  \Sigma, \tau_i^{-1} g_{\Sigma}(-\tau_i))\le C_1.
 \end{align}
    Thus,  by Cheeger-Gromov compactness theorem,  $\Sigma_{\infty}$ is diffeomorphic to $\Sigma$.
  Choose $\tau_i'$ such that $|\tau_i' Q|_{h_i}=1$, where $h_i=\tau_i g_{\Sigma}(-\tau_i^{-1})$.
   Assume that  $p_{\infty}=(x,q)\in\mathbb R\times \Sigma$ and
  $W_{(\infty)}=Q+Q'$, where $Q\in T_q\Sigma$ and $Q'\in T_x\mathbb{R}$. Then by (\ref{orthogonal-1}) and  (\ref{orthogonal-2}), we have
 $Q'=0$ and
\begin{align}
|Q|_{g_{\Sigma}(0)}=1,~{\rm Ric}_{g_{\Sigma}(t)}(Q,Q)=0,~\forall~t\le0.\notag
\end{align}
It follows that $\tau_i'Q\to Q_{\infty}$,  where $Q_{\infty}\in T_{q_{\infty}}\Sigma_{\infty}$.  As a consequence,
\begin{align}
|Q_{\infty}|_{g_{\Sigma_{\infty}}(-1)}=1,~{\rm Ric}_{g_{\Sigma_{\infty}}(t)}(Q_{\infty},Q_{\infty})=0,~\forall~t\le 0.\notag
\end{align}
 Hence,  $(\Sigma_{\infty},g_{\Sigma_{\infty}}(t))$ is a compact shrinking Ricci soliton  with nonnegative sectional curvature,  but not strictly positive  Ricci curvature.  By  \cite[Theorem 1.1]{GLX},  the universal cover of $(\Sigma_{\infty},g_{\Sigma_{\infty}}(t))$ must split  off a flat factor $\mathbb{R}^{l}$ $(l\ge1)$.  On the other hand,  since the fundamental group of any compact shrinking Ricci soliton is finite \cite[Theorem 1]{L} (also see \cite{Ma}, \cite{CTZ}),   the universal cover of $\Sigma_\infty$ should be comapct. Therefore, we get a contradiction.  The proof is completed.

\end{proof}

\section{Proofs of Theorem \ref{main theorem} and  Theorem \ref{cor-rotational symmetry of 4d}}

\begin{proof}[Proof of Theorem \ref{main theorem}] We may assume that  the steady (gradient) Ricci soliton $(M,g)$ is not falt.  By  the condition (\ref{upper-linear-bound}) in Theorem \ref{main theorem} together with  Corollary  \ref{cor-lower decay nonnegative case},  we see that Theorem \ref{main theorem-2} is true for  a  $\kappa$-noncollapsed  $(M,g)$  with nonnegative curvature operator  if $(M,g)$ satisfies (\ref{upper-linear-bound}).  Then
the  ancient solution  $g_{\mathbb{S}^{n-1}}(t)$  in Theorem \ref{main theorem-2} is in fact a compact  $\kappa$-solution which satisfies (\ref{curvature-decay-sn}). By a result of Ni \cite{Ni},  $g_{\mathbb{S}^{n-1}}(t)$  must be a  flow of  shrinking round spheres. This means that    $(M,g)$ is   asymptotically   cylindrical.  Hence by   Brendle's result \cite{Br2},  it is  rotationally symmetric.

\end{proof}

\begin{proof}[Proof of Theorem  \ref{cor-rotational symmetry of 4d}]  We note that the  ancient solution  $g_{\mathbb{S}^{n-1}}(t)$  in  Theorem  \ref{main theorem-2} satisfies the condition (\ref{curvature-decay-sn}).   Then,  as in   the proof of Theorem  \ref{main theorem-2}),     $(\mathbb{S}^{n-1},\tau_i^{-1} g_{\mathbb{S}^{n-1}}(\tau_i t),x)$ subsequently converges  to a  compact gradient shrinking Ricci soliton $(N',$ $g'(t),x')$ for any fixed $x\in \mathbb{S}^{n-1}$ and  any sequence $\{\tau_i\}\to\infty$   Moreover,   $N'$ is diffeomorphic to $\mathbb{S}^{n-1}$.   Note that $n=4$. By \cite{H2} or \cite{BW},  any shrinking Ricci soliton with nonnegative sectional curvature on $\mathbb{S}^3$ must be a round sphere. Hence,
$(N',g'(t),x')$ is a flow of  shrinking round  spheres.  This implies that  $(\mathbb{S}^3,\tau_i^{-1}g_{\mathbb{S}^3}(\tau_i t_0))$ has a  strictly positive curvature operator as long as $\tau_i$ is sufficiently  large. As a consequence,  $(\mathbb{S}^{3},g_{\mathbb{S}^{3}}(\tau_i t_0))$ has positive curvature operator.  Since the positivity of  curvature operator is preserved under Ricci flow,   $g_{\mathbb{S}^{3}}(t)$ has positive curvature operator for all $t\in(-\infty,0)$.  Therefore, we see that the  ancient solution  $g_{\mathbb{S}^{3}}(t)$ is  a compact  $\kappa$-solution which satisfies (\ref{curvature-decay-sn}). As  in the proof of  Theorem \ref{main theorem},  $(M,g)$ is   asymptotically   cylindrical, and so it is   rotationally symmetric by    Brendle's result  \cite{Br2}.

\end{proof}

\subsection{Further remarks}

The rigidity of nonnegatively curved $n$-dimensional $\kappa$-noncollapsed steady  (gradient) Ricci solitons is closely related to the classification of positively  curved shrinking Ricci soliton on $\mathbb{S}^{n-1}$. To the authors'  knowledge, it is still  unknown whether the shrinking soliton with positive  sectional curvature on $\mathbb{S}^{n}$ is unique when $n\ge4$. If it is true, then   the argument in proof of Theorem \ref{cor-rotational symmetry of 4d} can be generalized to any dimension. Namely, we may prove that
any $n$-dimensional steady  (gradient)  Ricci soliton with   nonnegative  sectional curvature, positive Ricci curvature and exactly linear curvature decay must be rotationally symmetric.

\section*{References}

\vskip6mm
\small

\begin{enumerate}

\renewcommand{\labelenumi}{[\arabic{enumi}]}

\bibitem{BW} B\"{o}hm, C. and Wilking, B., \textit{Manifolds with positive curvature operators are space forms}, Ann. of Math., \textbf{167} (2008), 1079-1097.


\bibitem{Br1} Brendle, S., \textit{Rotational symmetry of self-similar solutions to the Ricci flow}, Invent. Math. , \textbf{194} No.3 (2013), 731-764.

\bibitem{Br2} Brendle, S., \textit{Rotational symmetry of Ricci solitons in higher dimensions}, J. Diff. Geom., \textbf{97} (2014), no. 2, 191-214.

\bibitem{CaCh} Cao, H.D. and Chen, Q., \textit{On locally conformally flat gradient steady Ricci solitons},
Trans. Amer. Math. Soc., \textbf{364} (2012), 2377-2391 .

\bibitem{CTZ} Cao, H., Tian, G. and Zhu, X.H., \textit{K\"ahler-Ricci solitons on compact complex manifolds with $c_1(M)>0$},   Geom. Funct. and Anal., \textbf{15} (2005),  697-719.




\bibitem{Ch} Chen, B.L., \textit{Strong uniqueness of the Ricci flow},  J. Diff.  Geom. \textbf{82} (2009),  363-382.

\bibitem{DZ1}Deng, Y.X. and Zhu, X.H., \textit{Complete non-compact gradient Ricci solitons with nonnegative Ricci curvature},
Math. Z., \textbf{279} (2015), no. 1-2, 211-226.

\bibitem{DZ2} Deng, Y.X. and Zhu, X.H., \textit{Asymptotic behavior of positively curved steady Ricci solitons}, arXiv:math/1507.04802, to appear in Trans. Amer. Math. Soc..

 \bibitem{DZ3} Deng, Y.X. and Zhu, X.H.,  \textit{ Steady Ricci solitons  with horizontally $\epsilon$-pinched  Ricci curvature }, arXiv:math/1601.02111.

\bibitem{DZ5} Deng, Y.X.; Zhu, X.H., 3D steady gradient Ricci solitons with linear curvature decay, arXiv:math/1612.05713, to appear in IMRN.

\bibitem{DZ4} Deng, Y.X. and Zhu, X.H., \textit{Asymptotic behavior of positively curved steady Ricci solitons, II}, arXiv:math/1604.00142.


\bibitem{GLX} Guan, P.F., Lu, P. and Xu, Y.Y., \textit{A regidity theorem for codimension one shrinking gradient Ricci solitons in $\mathbb{R}^{n+1}$}
 Calc. Var. Partial Differential Equations \textbf{54} (2015), no. 4, 4019-4036.

\bibitem{H2} Hamilton, R.S., \textit{Three manifolds with positive Ricci curvature}, J. Diff. Geom., \textbf{17} (1982), 255-306.

\bibitem{H4} Hamilton, R.S., \textit{A compactness property for solution of the Ricci flow}, Amer. J. Math., \textbf{117} (1995), 545-572.

\bibitem{H1} Hamilton, R.S., \textit{Formation of singularities in the Ricci flow}, Surveys in Diff. Geom., \textbf{2} (1995),
7-136.

\bibitem{L} Lott, J., \textit{Some geometric properties of the Bakry-\'{E}mery-Ricci tensor}. Comment. Math. Helv. 78 (2003), no. 4, 865-883.

\bibitem{Ma} Mabuchi, T., \textit{Heat kernel estimates and the Green functions on Multiplier Hermitian manifolds }.  Toh\"oku Math. J. 54 (2002), 259-275.

\bibitem{MT} Morgan, J. and Tian, G., \textit{ Ricci flow and the Poincar\'{e} conjecture}, Clay Math. Mono., 3. Amer. Math. Soc., Providence, RI; Clay Mathematics Institute, Cambridge, MA, 2007, xlii+521 pp. ISBN: 978-0-8218-4328-4.

\bibitem{Ni} Ni, L., \textit{Closed type-I Ancient solutions to Ricci flow}, Recent Advances in Geometric Analysis, ALM, vol. 11 (2009), 147-150.

\bibitem{Na} Naber, A., \textit{Noncompact shrinking four solitons with nonnegative curvature}, J. Reine Angew Math., \textbf{645} (2010), 125-153.

\bibitem{Pe} Perelman, G., \textit{The entropy formula for the Ricci flow and its geometric applications}, arXiv:math/0211159.


\bibitem{S1} Shi, W.X., \textit{Ricci deformation of the metric on complete noncompact Riemannian
manifolds}, J. Diff. Geom., \textbf{30} (1989), 223-301.

\end{enumerate}

\end{document}